\newtheorem{theorem}{Theorem}[section]
\newtheorem{proposition}[theorem]{Proposition}
\newtheorem{lemma}[theorem]{Lemma}
\newtheorem{corollary}[theorem]{Corollary}
\theoremstyle{definition}
\newtheorem{definition}[theorem]{Definition}
\newtheorem{example}[theorem]{Example}
\newtheorem{remark}[theorem]{Remark}
\newcommand{\on}{\operatorname}
\newcommand{\GL}{\operatorname{\bf GL}}
\newcommand{\Aut}{\operatorname{\bf Aut}}
\newcommand{\G}{\operatorname{\bf G}}
\renewcommand{\H}{\operatorname{\bf H}}
\newcommand{\mb}{\mathbb}
\newcommand{\mc}{\mathcal}
\newcommand{\ms}{\mathscr}
\newcommand{\m}{\mathfrak{m}}
\newcommand{\Spec}{\operatorname{Spec}}
\newcommand{\x}{\mathbf{x}}
\newcommand{\e}{\mathbf{e}}
\newcommand{\0}{\mathbf{0}}
\newcommand{\g}{\mathfrak{g}}
\newcommand{\h}{\mathfrak{h}}
\newcommand{\p}{\mathfrak{p}}
\newcommand{\q}{\mathfrak{q}}
\newcommand{\X}{\mathcal{X}}
\newcommand{\Y}{\mathcal{Y}}
\begin{document}

\title{Differentiably simple rings and ring extensions defined by $p$-basis} 
\author{Celia del Buey de Andr\'es}
\address{Dpto de Matem\'aticas, Universidad Aut\'onoma de Madrid, Ciudad Universitaria de Cantoblanco, 28049 Madrid, Spain}
\email{celia.delbuey@uam.es}

\author{Diego Sulca}
\address{CIEM-FAMAF, Universidad Nacional de C\'ordoba, Ciudad Universitaria, C\'ordoba X5000HUA, Argentina}
\email{diego.a.sulca@unc.edu.ar}

\author{Orlando E. Villamayor}
\address{Dpto de Matem\'aticas, Universidad Aut\'onoma de Madrid and Instituto de Ciencias Matem\'aticas CSIC-UAM-UC3M-UCM, Ciudad Universitaria de Cantoblanco, 28049 Madrid, Spain}
\email{villamayor@uam.es}

\subjclass{13B05, 14M15, 14L17}
\keywords{Differentiably simple rings, Galois extensions of exponent one}

\begin{abstract}
We review the concept of differentiably simple ring and we give a new proof of Harper's Theorem on the characterization of Noetherian differentiably simple rings in positive characteristic. We then study flat families of differentiably simple rings, or equivalently, finite flat extensions of rings which locally admit $p$-basis. These extensions are called {\em Galois extensions of exponent one}. For such an extension $A\subset C$, we introduce an $A$-scheme, called the {\em Yuan scheme}, which parametrizes subextensions $A\subset B\subset C$ such that $B\subset C$ is Galois of a fixed rank. So, roughly, the Yuan scheme can be thought of as a kind of Grassmannian of Galois subextensions. We finally prove that the Yuan scheme is smooth and compute the dimension of the fibers. 
\end{abstract}

\maketitle

\section{Introduction}

Throughout, by a ring we mean a commutative ring with $1$ and $p$ is a prime number.
 
A ring $R$ is called {\em differentiably simple} if the only ideals $I\subset R$ such that $D(I)\subset I$ for any derivation $D\colon R\to R$ are the ring itself and the zero ideal.
In characteristic $p$, the structure of Noetherian differentiably simple rings is well-understood: {\em a Noetherian ring $R$ is differentiably simple if and only if  it is of the form $k[X_1,\ldots,X_n]/\langle X_1^p,\ldots,X_n^p\rangle$, where $k$ is a field}.

The above characterization, which is known as Harper's theorem, was first obtained by Harper under the assumption that $R$ is a finitely generated algebra over an algebraically closed field \cite{Harper61}, and was later completed by Yuan in \cite{Yuan64}.
The original proofs of Harper and Yuan involve somewhat complicated computations with derivations. Shorter proofs were given more recently by Maloo \cite{Maloo93} and Bavula \cite{Bavula08}. 
In Section \ref{section: diff simple rings} of these notes, we present another proof of Harper's theorem which is more conceptual (Theorem \ref{Yuan-Harper theorem}). Next,
in addition to reviewing other properties of differentiably simple rings, we present an alternative proof of the following result by Andr\'e \cite[Proposition 67]{Andre91} that describes the fiber of flat homomorphisms between Noetherian differentiably simple rings: {\em if $(S,\m_S)\subset (R,\m_R)$ is a flat extension of Noetherian differentiably simple rings and $R^p\subset S$, then the fiber $R/\m_S R$ is differentiably simple} (Theorem \ref{th:flat homomorphism of diff simple rings}).

In Section \ref{section: flat families}, we consider flat families of differentiably simple rings. 
This goes back to Yuan \cite{Yuan70}, who introduced a notion of Galois extension for rings to generalize  Jacobson's Galois theory for purely inseparable field extensions of exponent one.  
A ring extension $A\subset C$ is called a {\em Galois extension of exponent one} if
\begin{enumerate}
	\item $C^p\subset A$,
	\item $C$ is a finite projective $A$-module, and
	\item $C$ has locally $p$-basis over $A$. 
\end{enumerate}
The third condition can actually be replaced by the condition that $C\otimes_A \kappa(\p)$ is differentiably simple for each $\p\in\Spec(A)$. 
So, roughly speaking, a finite extension $A\subset C$, such that $C^p\subset A$, is a Galois extension if and only if $C$ is flat over $A$ and all fibers are differentiably simple. 
 
While Yuan proceeds to establish a Galois correspondence between the $A$-subalgebras $B\subset C$ such that $C$ is Galois over $B$ and certain restricted Lie subalgebras of $\on{Der}_A(C)$, we focus on the problem of parameterizing these subalgebras with a geometric object.  
To this end, we introduce in Section \ref{section: Yuan scheme} a functor $\mc{Y}_{C/A}^r$ from the category of $A$-algebras to the category of sets, and then we prove that it is representable by a scheme over $A$, which we call  the {\em Yuan scheme}. By construction, the set of $A$-points of this scheme is in bijection with the set of intermediate rings $B$,  $A\subset B\subset C$, such that $C$ is Galois over $B$ of rank $p^r$. 
We shall prove that the scheme $\mc{Y}_{C/A}^r$ is a locally closed subscheme of a Grassmannian of the $A$-module $C$ (Theorem \ref{th: existence of the Yuan scheme}). Furthemore, we shall show that $\mc{Y}_{C/A}^r$ is smooth over $A$ and we compute the dimension of the fibers (Theorem \ref{th:smoothness over a ring}).

A particular object of interest is the Yuan scheme $\mc{Y}^r_{L/k}$, where $k\subset L$ is a finite purely inseparable extension of fields such that $L^p\subset k$ and $L$ admits a $p$-basis over $k$ of $n$ elements. For each $0<r< n$, we show that $\mc{Y}_{L/k}^r$ is a quasi-projective smooth $k$-variety of dimension $(p^n-p^{n-r})(n-r)$ whose rational points parametrize the subextensions $k\subset K\subset L$ such that $K\subset L$ has a $p$-basis with $r$ elements (see Theorem \ref{th:smoothness over a field}).

\subsection{Notation and conventions}
Throughout, $p$ denotes a fixed rational prime. 
All rings are assumed to be commutative with identity.

If $k$ is a ring and $A,B$ are $k$-algebras, then $\on{Hom}_k(A,B)$ denotes the set of $k$-algebra homomorphisms from $A$ to $B$.
 
If $A$ is a ring and $M$ is an $A$-module, we denote by $\on{Der}(A,M)$ the $A$-module of derivations $D\colon A\to M$. In addition, if $A$ has a structure of a $k$-algebra for some other ring $k$, then $\on{Der}_k(A,M)$ denotes the $A$-submodule of $k$-linear derivations. We write $\on{Der}(A):=\on{Der}(A,A)$ and $\on{Der}_k(A):=\on{Der}_k(A,A)$. 

If $A$ is a local ring, we denote by $\m_A$ and $\kappa(\m)$ its maximal ideal and its residue field respectively.

A scheme $X$ over a ring $k$ is though of interchangeably as a scheme or as a functor from the category of $k$-algebras to the category of sets.

\section{Differentiably simple rings of characteristic $p$}\label{section: diff simple rings}

In this section, we review some basic facts about differentiably simple rings.
We present new proofs for the following two important results: 
the characterization of Noetherian differentiably simple rings (Theorem \ref{Yuan-Harper theorem}), and a theorem describing the fiber of flat homomorphisms between Noetherian differentiably simple rings (Theorem \ref{th:flat homomorphism of diff simple rings}).

\begin{definition}
	Let $R$ be a ring. An ideal $I\subset R$ is \textit{differential} if $D(I)\subset I$ for any derivation $D\colon R\to R$. The ring $R$ is \textit{differentiably simple} if the only differential ideals are $R$ and $0$.
\end{definition}

In positive characteristic $p$, any ideal of $R^p$ extends to a differential ideal in $R$. So if $R$ is differentiably simple, $R^p$ must be a field, and hence $R$ must be local with maximal ideal $\{x\in R: x^p=0\}$. Furthermore, it is easy to prove that any maximal subfield of $R$ that includes $R^p$ is a coefficient field. We summarize this information as follows:

\begin{lemma}[{\cite[Theorem 2.1]{Yuan64}}]\label{Yuan's lemma}
	Let $R$ be a differentiably simple ring of characteristic $p$. Then 
	\begin{enumerate}[(i)]
		\item  $R$ is local and every element $x$ in the maximal ideal satisfies $x^p=0$.
		\item  $R^p$ {is a field} and every maximal subfield of $R$ including $R^p$ is a coefficient field.
	\end{enumerate}
\end{lemma}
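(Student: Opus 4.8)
The plan is to build everything on one elementary observation: in characteristic $p$ every derivation annihilates $p$-th powers, since $D(y^p)=p\,y^{p-1}D(y)=0$, and consequently every derivation is $R^p$-linear, because the Leibniz rule gives $D(a^px)=a^pD(x)$ for all $a,x\in R$. From this I would deduce at once that the extension $JR$ of any ideal $J\subseteq R^p$ is a differential ideal of $R$: it is generated by elements $j\in J\subseteq R^p$, and $D(jx)=jD(x)\in JR$ since $D(j)=0$. This is the bridge that turns the ideal theory of $R^p$ into differential-ideal theory of $R$.

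For part (i) I would first show $R^p$ is a field. Recall that in characteristic $p$ the Frobenius is a ring homomorphism, so $R^p$ is precisely the set of $p$-th powers. Given a nonzero $c\in R^p$, the ideal $cR=(cR^p)R$ is a nonzero differential ideal, hence equals $R$ by differential simplicity; thus $c$ is a unit of $R$. Writing $c=a^p$, the invertibility of $a^p$ forces $a$ to be a unit, whence $c^{-1}=(a^{-1})^p\in R^p$, and $R^p$ is a field. Next I would set $\m:=\{x\in R:x^p=0\}$, which is the kernel of the Frobenius $R\to R$ and hence an ideal. Any $x\notin\m$ has $x^p$ a nonzero element of the field $R^p$, so $x^p$ is a unit and therefore $x$ is a unit. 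Hence $R$ is local with maximal ideal $\m$, which proves (i).

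For part (ii), let $K$ be a subfield of $R$ maximal among those containing $R^p$. Since $K$ is a field and $\m$ is proper, $K\cap\m=0$, so the composite $K\to R/\m$ is injective; the content is surjectivity. I would argue by contradiction: if some class $\bar x\in R/\m$ avoids the image $\bar K$, lift it to $x\in R$ and note $a:=x^p\in R^p\subseteq K$. If $a$ were a $p$-th power $c^p$ in $K$, then $(x-c)^p=0$ would put $x-c\in\m$, forcing $\bar x=\bar c\in\bar K$, a contradiction; therefore $t^p-a$ is irreducible over $K$, and $K[x]\cong K[t]/(t^p-a)$ is a subfield of $R$ strictly larger than $K$ and still containing $R^p$, contradicting maximality. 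Hence $\bar K=R/\m$ and $K$ is a coefficient field.

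The step I expect to be the main obstacle is the surjectivity argument in (ii): one must simultaneously exclude the degenerate lift in which $x-c\in\m$ and guarantee that the enlargement $K[x]$ is genuinely a field rather than a ring carrying nilpotents. The dichotomy ``$a$ is a $p$-th power in $K$, or else $t^p-a$ is irreducible over $K$'' is exactly what resolves both issues at once, so making that dichotomy precise is the heart of the proof.
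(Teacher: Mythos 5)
Your proof is correct and follows the same route the paper sketches in the paragraph preceding the lemma (extending ideals of $R^p$ to differential ideals to get that $R^p$ is a field, deducing that $R$ is local with maximal ideal the kernel of Frobenius, and then using the dichotomy for $t^p-a$ over a field of characteristic $p$ to show a maximal subfield containing $R^p$ surjects onto the residue field). The paper itself defers the details to Yuan's original article, and your write-up supplies them correctly.
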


\begin{example}\label{ex: example of diff simple}
	Let $L$ be a field of characteristic $p$ and $\{X_\lambda\}_{\lambda\in \Lambda}$ a collection of variables. Then 
	$$R:=L[X_\lambda: \lambda \in \Lambda]/\langle X_\lambda^p: \lambda\in \Lambda\rangle$$ 
	is a differentiably simple ring.
	To prove this we first make two simple observations:
	\begin{itemize}
		\item Since the partial derivatives $\frac{\partial}{\partial X_\lambda}$ of $L[X_\lambda: \lambda\in \Lambda]$ clearly preserve the ideal $\langle X_\lambda^p: \lambda\in \Lambda\rangle$, they induce derivations $\delta_\lambda \colon R\to R$.
		\item The class in $R$ of a polynomial $f\in L[X_\lambda: \lambda\in \Lambda]$ is a unit if and only if the constant term of $f$ is non-zero.
	\end{itemize}
	With these observations, the proof that $R$ is differentiably simple is reduced to showing the following: 
	\begin{enumerate}
		\item[(*)]{\em If $f\in L[X_\lambda: \lambda\in \Lambda]$ is a polynomial included in the ideal $\langle X_\lambda : \lambda\in \Lambda\rangle$, but not in the ideal $\langle X_\lambda^p: \lambda\in \Lambda\rangle$, then some composition of partial derivatives transforms $f$ into a polynomial with non-zero constant term.}
	\end{enumerate} 
	To show this, observe that the condition $f\notin \langle X_\lambda^p: \lambda\in \Lambda\rangle$ amounts to saying that $f$ has a non-zero term $c X_{\lambda_1}^{\beta_1}\cdots X_{\lambda_n}^{\beta_n}$ with $0\leq \beta_i<p$ for $i=1,\ldots,n$.  Let
	\begin{align*}
		D:=\frac{\partial^{\beta_1}}{\partial X_{\lambda_1}^{\beta_1}}\circ\cdots \circ\frac{\partial^{\beta_n}}{\partial X_{\lambda_n}^{\beta_n}}\,.
	\end{align*}
	It is straightforward to check that $D(f)$ has constant term $\beta_1!\cdots\beta_n! \cdot c\neq 0$. This completes the proof of (*).
\end{example}

We now state Harper's characterization of Noetherian differentiably simple rings in characteristic $p$. Observe that the statement makes use of Lemma \ref{Yuan's lemma}.

\begin{theorem}[Harper-Yuan]\label{Yuan-Harper theorem}
	Let $R$ be a Noetherian differentiably simple ring of characteristic $p$, $\{x_1,\ldots,x_n\}$ a minimal generating set for the maximal ideal and $L\subset R$ a coefficient field.
	Then the $L$-algebra homomorphism $L[X_1,\ldots,X_n]\to R$ given by $X_i\mapsto x_i$ induces an isomorphism 
	$L[X_1,\ldots,X_n]/\langle X_1^p,\ldots,X_n^p\rangle\cong R$.
\end{theorem}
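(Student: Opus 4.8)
The plan is to factor the homomorphism through $A:=L[X_1,\ldots,X_n]/\langle X_1^p,\ldots,X_n^p\rangle$, prove surjectivity by a routine nilpotency argument, and then prove injectivity by using differential simplicity of $R$ to manufacture enough derivations, after which the coefficient-extraction computation of Example \ref{ex: example of diff simple} finishes the job. First, writing $\m:=\m_R$, each generator $x_i\in\m$ satisfies $x_i^p=0$ by Lemma \ref{Yuan's lemma}(i), so $\langle X_1^p,\ldots,X_n^p\rangle$ lies in the kernel and the map descends to an $L$-algebra homomorphism $\bar\phi\colon A\to R$. For surjectivity, note that $L$ is a coefficient field, so $R=L+\m$; since $R$ is Noetherian and every element of $\m$ is nilpotent, $\m^N=0$ for some $N$. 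The image $B$ of $\bar\phi$ is a subring containing $L$ and the $x_i$, hence contains every monomial in the $x_i$; writing $R$-coefficients as $L+\m$ gives $\m^k\subseteq B+\m^{k+1}$, and iterating yields $\m\subseteq B$, so $B=R$.

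Because $x_i^p=0$, the $p^n$ monomials $x_1^{a_1}\cdots x_n^{a_n}$ with $0\le a_i<p$ span $R$ over $L$, so $\dim_L R\le p^n=\dim_L A$; thus $\bar\phi$ is an isomorphism if and only if $\dim_L R=p^n$, i.e. if and only if these monomials are $L$-linearly independent. I would test independence on the associated graded ring $G:=\on{gr}_{\m}(R)=\bigoplus_k \m^k/\m^{k+1}$. Minimality of the generating set means $\{\bar x_i\}$ is a basis of $\m/\m^2$, and $\bar x_i^{\,p}=0$ in $G$, so there is a surjection of graded $L$-algebras $A\cong\on{gr}(A)\twoheadrightarrow G$; the theorem is then equivalent to this surjection being an isomorphism, i.e. to the degree-$d$ monomials being independent in $\m^d/\m^{d+1}$ for every $d$.

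The conceptual input is that \emph{every} derivation $D\in\on{Der}(R)$ induces an $L$-linear degree $-1$ derivation $\sigma(D)$ on $G$ by $\sigma(D)(\bar u)=\overline{D(u)}$ for $u\in\m^k$; the non-$L$-linear contribution $D(\ell)u$ of $D$ lands in $\m^k$ and hence dies in $\m^{k-1}/\m^k$, which is what dissolves the difficulty caused by a possibly imperfect residue field. Granting that $R$ carries enough derivations to produce operators $E_1,\ldots,E_n=\sigma(D_1),\ldots,\sigma(D_n)$ on $G$ with $E_i(\bar x_j)=\delta_{ij}$, each $E_i$ acts on $G$ exactly as $\partial/\partial\bar x_i$, so that $E_1^{a_1^{*}}\cdots E_n^{a_n^{*}}$ applied to a homogeneous relation $\sum_{|a|=d}c_a\bar x^{a}=0$ kills every term except $a=a^{*}$ and returns $c_{a^{*}}\prod_i a_i^{*}!$. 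This is verbatim the computation (*) of Example \ref{ex: example of diff simple}, and since $0\le a_i^{*}<p$ it forces $c_{a^{*}}=0$, a contradiction. Hence $G\cong A$ and $\bar\phi$ is an isomorphism.

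The substantive step, and the one I expect to be the main obstacle, is the italicized claim: that the pairing $\on{Der}(R)\to\on{Hom}_L(\m/\m^2,L)$, $D\mapsto(\bar u\mapsto\overline{D(u)})$, is surjective. This is exactly where differential simplicity must be used in an essential way, and it cannot be extracted order by order, since differential simplicity only directly controls whether a single derivative lands in $\m$. A first reduction is that the largest differential ideal contained in $\m$ is $0$; by the descending chain condition this makes the filtration $F_0=\m$, $F_{k+1}=\{u\in F_k: D(u)\in F_k\ \forall D\}$ terminate at $0$, which shows that some iterated derivative of any nonzero element of $\m$ is a unit, and in particular that the pairing is nonzero. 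Upgrading this to full surjectivity (equivalently, killing the orthogonal complement of the image) is the crux, and here I would most likely fall back on a Jacobson-style induction on $n$: pick $D$ with $D(x_1)=1$, decompose $R=\bigoplus_{i=0}^{p-1}R_0\,x_1^{i}$ over the subring $R_0$ of $D$-constants, check that $R_0$ is again differentiably simple with cotangent dimension $n-1$, and conclude by the inductive hypothesis. I would attempt the associated-graded argument first and resort to the peeling induction if establishing surjectivity of the pairing directly proves awkward.
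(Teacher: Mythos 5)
Your reduction to the associated graded ring is sound as far as it goes: the surjectivity argument, the passage to $G=\on{gr}_{\m}(R)$, the observation that any $D\in\on{Der}(R)$ induces an $L$-linear degree $-1$ derivation of $G$ (and that these induced operators commute, since $[D_i,D_j]$ drops degree by only one), and the final coefficient extraction via (*) are all correct. But the proof has a genuine gap exactly where you flag it: you never establish the existence of derivations $D_1,\ldots,D_n\in\on{Der}(R)$ with $D_i(x_j)\equiv\delta_{ij}\pmod{\m}$, i.e.\ surjectivity of $\on{Der}(R)\to\on{Hom}_L(\m/\m^2,L)$. Neither fallback closes it. The filtration $F_0=\m$, $F_{k+1}=\{u\in F_k: D(u)\in F_k\ \forall D\}$ does terminate at $0$ (since $\bigcap_k F_k$ is a differential ideal in $\m$), but this only shows the pairing is \emph{nonzero}; surjectivity is equivalent to the much stronger statement $F_1=\m^2$, and nothing in the DCC argument rules out $\m^2\subsetneq F_1\subsetneq\m$. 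The Jacobson-style peeling is likewise only a sketch: its key step, the decomposition $R=\bigoplus_{i<p}R_0x_1^i$ over the ring of $D$-constants, requires controlling $D^p$ or constructing a Taylor-type projection $\sum_i(-1)^i x_1^iD^i/i!$ whose image lies in $\ker D$ — and the obvious computation leaves an error term involving $D^p$. This is precisely the delicate derivation calculus that makes the original Harper--Yuan proofs complicated, so it cannot be waved through as routine; and one would still have to verify that $R_0$ is differentiably simple of embedding dimension $n-1$.

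For comparison, the paper's proof avoids constructing any derivation on $R$ with prescribed values. It works upstairs in $S=L[X_1,\ldots,X_n]$: with $I=\ker(S\to R)\subset\langle X_1,\ldots,X_n\rangle^2$, it forms $J=I+\langle D(f): f\in I,\ D\in\on{Der}(S)\rangle$, notes $J\subset\langle X_1,\ldots,X_n\rangle$ by Leibniz, and shows $\pi(J)$ is a \emph{differential} ideal of $R$ using Lemma \ref{lifting of derivations lemma} (every derivation of $R$ lifts to $S$ because $\Omega_S$ is free). Differential simplicity then forces $I=J$, i.e.\ $I$ is stable under all partial derivatives of $S$, and (*) immediately yields $I\subset\langle X_1^p,\ldots,X_n^p\rangle$. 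The moral is that differential simplicity is exploited through an ideal built from the freely available derivations of the polynomial ring, rather than through a dual basis of derivations on $R$ itself, which is the object your argument would still need to produce.
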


Our proof will make use of the next lemma.

\begin{lemma}\label{lifting of derivations lemma}
	Let $S$ be a ring such that the $S$-module of K\"ahler differentials $\Omega_S$ is projective (e.g. a polynomial ring over a field). Let $R=S/I$ be a quotient ring, and let $\pi\colon S\to R$ be the quotient map. 
	If $\delta$ is a derivation of $R$, then there exists a derivation $D\colon S\to S$ such that $\pi\circ D=\delta\circ\pi$.
\end{lemma}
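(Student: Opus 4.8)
The plan is to reinterpret both the given derivation and the sought-after derivation as homomorphisms out of the module of Kähler differentials, and then to deduce the existence of the lift from the projectivity hypothesis via a standard exactness argument.

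First I would observe that, although $\delta$ is a derivation of $R$, the composite $\delta\circ\pi\colon S\to R$ is naturally a derivation of $S$ with values in the $S$-module $R$ (where $S$ acts on $R$ through $\pi$). Indeed, the Leibniz rule for $\delta$ together with the multiplicativity of $\pi$ gives, for $a,b\in S$,
\[
(\delta\circ\pi)(ab)=\delta(\pi(a)\pi(b))=\pi(a)\,(\delta\circ\pi)(b)+\pi(b)\,(\delta\circ\pi)(a),
\]
which is exactly the Leibniz rule for a derivation $S\to R$. Thus $\delta\circ\pi\in\on{Der}(S,R)$.

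Next I would bring in the universal property of $\Omega_S$: for any $S$-module $M$ there is a natural isomorphism $\on{Der}(S,M)\cong\on{Hom}_S(\Omega_S,M)$, and under this identification the assignment $D\mapsto \pi\circ D$ corresponds to the map $\on{Hom}_S(\Omega_S,S)\to\on{Hom}_S(\Omega_S,R)$ obtained by post-composing with $\pi\colon S\to R$, i.e.\ by applying the functor $\on{Hom}_S(\Omega_S,-)$ to the $S$-linear surjection $\pi$. Finding the desired $D$ is therefore the same as exhibiting a preimage of $\delta\circ\pi$ under this map.

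The final step is where the hypothesis on $\Omega_S$ enters: since $\Omega_S$ is a projective $S$-module, the functor $\on{Hom}_S(\Omega_S,-)$ is exact, and in particular it carries the surjection $\pi\colon S\twoheadrightarrow R$ to a surjection $\on{Hom}_S(\Omega_S,S)\twoheadrightarrow\on{Hom}_S(\Omega_S,R)$. Hence $\delta\circ\pi$ lies in the image, and any preimage is a derivation $D\colon S\to S$ satisfying $\pi\circ D=\delta\circ\pi$, as required. There is no serious obstacle here beyond correctly packaging the problem in terms of $\on{Hom}_S(\Omega_S,-)$; the entire content is that projectivity of $\Omega_S$ converts the lifting problem for derivations into the trivial lifting property of a projective module against a surjection.
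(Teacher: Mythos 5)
Your argument is correct and is essentially the paper's own proof: both recognize $\delta\circ\pi$ as a derivation $S\to R$, translate it via the universal property of $\Omega_S$ into an $S$-module map $\Omega_S\to R$, and use projectivity of $\Omega_S$ to lift that map through the surjection $\pi\colon S\to R$. Your phrasing in terms of exactness of $\on{Hom}_S(\Omega_S,-)$ is just a repackaging of the same lifting step.
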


\begin{proof}
	The composition $\delta\circ \pi \colon S\to R$ is a derivation, hence it can be factorized as 
	\begin{align*}
		S\xrightarrow{d} \Omega_{S}\xrightarrow{\varphi} R \,,
	\end{align*}
	where $d$ is the (absolute) universal derivation of $S$, and $\varphi$ is some $S$-module homomorphism. 
	Since $\Omega_S$ is projective, there exists an $S$-module homomorphism $\Phi \colon \Omega_{S}\to S$ such that $\pi\circ\Phi=\varphi$. Then $D:=\Phi\circ d$ is a derivation of $S$ and it satisfies $\pi\circ D=\pi\circ\Phi \circ d=\varphi\circ d=\delta\circ\pi$.
\end{proof}

\medskip

\noindent{\em Proof of Theorem \ref{Yuan-Harper theorem}:}
	Let $\pi\colon L[X_1,\ldots,X_n]\to R$ be the map of the theorem's statement.
	By Lemma \ref{Yuan's lemma}, this map is surjective and its kernel $I:=\ker(\pi)$ includes $\langle X_1^p,\ldots,X_n^p\rangle$. 
	Thus, it only remains to prove that $I$ is exactly $\langle X_1^p,\ldots,X_n^p\rangle$ in order to obtain the desired isomorphism. 
	
	Observe that the assumption that $\{x_1,\ldots,x_n\}$ is a minimal generating set for the maximal ideal of $R$ translates to the inclusion $I\subset \langle X_1,\ldots,X_n\rangle^2$.
	Let $J$ be the ideal generated by $I$ and all the evaluations $D(f)$ for $f\in I$ and $D\in\on{Der}(L[X_1,\ldots,X_n])$. Since $I\subset\langle X_1,\ldots,X_n\rangle^2$, an application of the Leibnitz's rule shows that $J\subset \langle X_1,\ldots,X_n\rangle$. Then it follows that $\pi(J)$ is a proper ideal of $R$. 
	
	We claim that  $\pi(J)$ is a differential ideal of $R$. We have to show that $\delta(\pi(J))\subset\pi(J)$ for any derivation $\delta\colon R\to R$. Given $\delta\in\on{Der}(R)$, Lemma \ref{lifting of derivations lemma} with $S=L[X_1,\ldots,X_n]$ tells us that there is a derivation $D\colon L[X_1,\ldots,X_n]\to L[X_1,\ldots,X_n]$ such that $\pi\circ D=\delta\circ\pi$. Then it is enough to show that $D(J)\subset J$. The equality $\pi\circ D=\delta\circ\pi$ implies that $D(I)\subset I$, hence $D(I)\subset J$ by definition of $J$.
	 Take now any derivation  $E$ of $L[X_1,\ldots,X_n]$ and $f\in I$. We have $D(E(f))=E(D(f))-[E,D](f)$, and this element belongs to $J$ since $D(f)$ and  $f$ belong to $I$, and $E$ and $[E,D]$ belong to $\on{Der}(L[X_1,\ldots,X_n])$. 
	Summarizing, we have proved that $D(g)\in J$ for all $g$ in a generating set for $J$.
	Then an application of the Leibnitz's rule implies that $D(J)\subset J$, as we wanted to show.% Note that this implies that $\delta(\pi(J))\subset \pi(J)$. 
	%As $\delta$ was an arbitrary derivation of $R$, our claim is proved.
	%This completes the proof of the claim.
	 
	Finally, as $R$ is assumed to be differentiably simple and $\pi(J)$ is a proper differential ideal, we have $\pi(J)=0$, that is $I=J$. Then, an application of the assertion (*) in Example \ref{ex: example of diff simple} shows that necessarily $I\subset \langle X_1^p,\ldots,X_n^p\rangle$. 
	Thus, we conclude that $I=\langle X_1^p,\ldots,X_n^p\rangle$ and the proof of the theorem is complete.
\qed

\medskip

The next corollary will be used in the proof of Theorem \ref{th:flat homomorphism of diff simple rings}.

\begin{corollary}\label{cor: annihilator}
	Let $(R,\m)$ be a Noetherian differentiably simple ring of characteristic $p$ and let $f\in \m\setminus\m^2$. Then $\on{ann}_R(f)=f^{p-1} R$.
\end{corollary}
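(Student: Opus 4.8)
The plan is to reduce the statement to the explicit model furnished by Theorem \ref{Yuan-Harper theorem} and then run a direct monomial computation there. I would first dispose of the trivial inclusion: since $f\in\m$, Lemma \ref{Yuan's lemma}(i) gives $f^p=0$, so $f\cdot f^{p-1}=f^p=0$ and hence $f^{p-1}R\subseteq\on{ann}_R(f)$. The real content is therefore the reverse inclusion $\on{ann}_R(f)\subseteq f^{p-1}R$.

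To get it, I would arrange for $f$ to become a coordinate variable. Because $f\in\m\setminus\m^2$, its class in $\m/\m^2$ is nonzero and can be completed to an $L$-basis $\bar f,\bar f_2,\ldots,\bar f_n$ of $\m/\m^2$, where $L$ is a coefficient field and $n=\dim_L\m/\m^2$. Lifting to elements $f=f_1,f_2,\ldots,f_n\in\m$ and invoking Nakayama's lemma, the set $\{f_1,\ldots,f_n\}$ is a minimal generating set for $\m$. Applying Theorem \ref{Yuan-Harper theorem} to this generating set yields an $L$-algebra isomorphism $\bar R:=L[X_1,\ldots,X_n]/\langle X_1^p,\ldots,X_n^p\rangle\cong R$ sending the class $x_1$ of $X_1$ to $f$. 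Since any ring isomorphism carries $\on{ann}_{\bar R}(x_1)$ onto $\on{ann}_R(f)$ and $x_1^{p-1}$ onto $f^{p-1}$, it is enough to verify the corollary in $\bar R$ for the element $x_1$.

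In $\bar R$ the monomials $x_1^{\alpha_1}\cdots x_n^{\alpha_n}$ with $0\le\alpha_i<p$ form an $L$-basis, and for $g=\sum_\alpha c_\alpha x^\alpha$ the product $x_1 g$ retains only the terms with $\alpha_1<p-1$, the rest being killed by $x_1^p=0$; as these surviving monomials are distinct basis vectors, $x_1 g=0$ forces $g$ to be supported on monomials with $\alpha_1=p-1$, which is precisely $x_1^{p-1}\bar R$. This computation is routine; the step that requires care is the reduction, where I must ensure that $f$ genuinely occurs as a member of a minimal generating set of $\m$ and that Theorem \ref{Yuan-Harper theorem} then delivers a presentation sending $x_1$ to $f$. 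Everything else follows formally from the fact that isomorphisms preserve annihilators and principal ideals.
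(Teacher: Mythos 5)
Your proposal is correct and follows essentially the same route as the paper: both reduce via Theorem \ref{Yuan-Harper theorem} to the model ring $L[X_1,\ldots,X_n]/\langle X_1^p,\ldots,X_n^p\rangle$ with $f$ realized as the class of $X_1$ (you spell out the completion of $f$ to a minimal generating set, which the paper leaves implicit). The only difference is the final verification, where the paper deduces the annihilator by flat base change from the one-variable subring $L[X_1]/\langle X_1^p\rangle$ while you perform the equivalent direct monomial computation.
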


\begin{proof}
	By Theorem \ref{Yuan-Harper theorem}, we may assume that $R=L[X_1,\ldots,X_n]/\langle X_1^p,\ldots,X_n^p\rangle$ for some field $L$ and that $f$ is the class of $X_1$. Let $R'=L[X_1]/\langle X_1^p\rangle\subset R$. Note that $\on{ann}_{R'}(f)=f^{p-1} R'$. As $R'\subset R$ is clearly flat, we also have $\on{ann}_R(f)=f^{p-1} R$.
\end{proof}

The following proposition describes which quotients of a Noetherian differentiably simple ring are differentiably simple too.

\begin{proposition}\label{prop: quotient of differentiably simple rings}
	Let $(R,\m)$ be a Noetherian differentiably simple ring of characteristic $p$, and let $I$ be a proper ideal of $R$. Then $R/I$ is differentiably simple if and only if $I$ is generated by part of a minimal system of generators of $\m$.
\end{proposition}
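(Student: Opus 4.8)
The plan is to reduce everything to the explicit model $R\cong L[X_1,\ldots,X_n]/\langle X_1^p,\ldots,X_n^p\rangle$ supplied by Theorem \ref{Yuan-Harper theorem}, and then to run a dimension count resting on the principle that a Noetherian differentiably simple ring of characteristic $p$ whose maximal ideal needs $d$ generators has $L$-dimension exactly $p^d$. Throughout I identify the residue field of $R$ with the coefficient field $L$ (legitimate by Lemma \ref{Yuan's lemma}(ii)); the same $L$ is a coefficient field for every quotient $R/I$ with $I\subseteq\m$, so all cotangent spaces below are honest $L$-vector spaces.

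For the ``if'' direction, suppose $I=\langle y_1,\ldots,y_m\rangle$ where $\{y_1,\ldots,y_n\}$ is a minimal generating set of $\m$. First I would observe that $Y_i\mapsto y_i$ defines a surjective $L$-algebra homomorphism $L[Y_1,\ldots,Y_n]/\langle Y_1^p,\ldots,Y_n^p\rangle\to R$: it is well defined because $y_i^p=0$ by Lemma \ref{Yuan's lemma}(i), and surjective because the $y_i$ generate $\m$ while $L$ is a coefficient field. As both sides have $L$-dimension $p^n$ (the source by inspection, the target by Theorem \ref{Yuan-Harper theorem}), this surjection of equidimensional $L$-vector spaces is an isomorphism. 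Under it $I$ corresponds to $\langle Y_1,\ldots,Y_m\rangle$, so $R/I\cong L[Y_{m+1},\ldots,Y_n]/\langle Y_{m+1}^p,\ldots,Y_n^p\rangle$, which is differentiably simple by Example \ref{ex: example of diff simple}.

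For the ``only if'' direction, assume $R/I$ is differentiably simple and set $d:=\dim_L \m/(\m^2+I)$, the minimal number of generators of $\m/I$. Applying Theorem \ref{Yuan-Harper theorem} to $R/I$ gives the numerical input $\dim_L R/I=p^d$. The image $\bar I:=(I+\m^2)/\m^2$ of $I$ in the cotangent space $\m/\m^2$ then has dimension $e=n-d$; I would pick $y_1,\ldots,y_e\in I$ whose classes form a basis of $\bar I$. Being linearly independent in $\m/\m^2$, these elements form part of a minimal generating set of $\m$, so the sub-ideal $J:=\langle y_1,\ldots,y_e\rangle\subseteq I$ is covered by the ``if'' direction: $R/J\cong L[\,\cdot\,]/\langle\cdot^p\rangle$ in $n-e=d$ variables, whence $\dim_L R/J=p^d$. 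Since $J\subseteq I$ yields a surjection $R/J\twoheadrightarrow R/I$ between $L$-vector spaces both of dimension $p^d$, it must be an isomorphism, forcing $J=I$ and exhibiting $I$ as generated by part of a minimal system of generators.

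The whole argument is a dimension count once the model from Theorem \ref{Yuan-Harper theorem} is installed, so I anticipate no structural obstacle. The one point that genuinely needs care is passing from the equality $\bar J=\bar I$ of leading terms to the equality $J=I$ of ideals: having matched associated graded pieces does not by itself give $J=I$ (Nakayama is unavailable here, since $\bar I$ controls $I$ only modulo $\m^2$, not modulo $\m I$). It is precisely the length identity $\dim_L R/J=\dim_L R/I=p^d$ — itself a consequence of Theorem \ref{Yuan-Harper theorem} applied on both sides — that closes this gap, so it is worth isolating and stating that fact cleanly before invoking it.
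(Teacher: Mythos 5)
Your proof is correct, and it follows the paper's strategy in outline: reduce to the normal form of Theorem \ref{Yuan-Harper theorem}, and for the converse choose elements of $I$ lifting a basis of $(I+\m^2)/\m^2$, which are automatically part of a minimal generating set of $\m$. Where you diverge is in how the converse is closed. The paper finishes with a commutative-diagram computation: with $\pi\colon L[X_1,\ldots,X_n]\to R$ adapted to the chosen generators, Theorem \ref{Yuan-Harper theorem} applied to $R/I$ identifies $\ker(\bar{\pi})=\langle X_{r+1}^p,\ldots,X_n^p\rangle$, whence $\pi^{-1}(I)=\langle X_1,\ldots,X_r,X_{r+1}^p,\ldots,X_n^p\rangle$ and so $I=\langle x_1,\ldots,x_r\rangle$. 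You instead run a length count: Harper--Yuan applied to $R/I$ gives $\dim_L R/I=p^{d}$ with $d=\dim_L\m/(\m^2+I)$, the ``if'' direction gives $\dim_L R/J=p^{d}$ for the subideal $J\subseteq I$ generated by your lifts, and the surjection $R/J\to R/I$ between $L$-spaces of equal finite dimension forces $J=I$. This is a clean, self-contained substitute for the diagram chase, and you correctly isolate the one delicate point --- that agreement of $I$ and $J$ modulo $\m^2$ does not by itself give $I=J$, Nakayama being unavailable --- which is exactly what the numerical identity supplies. The only superfluous step is in the ``if'' direction, where Theorem \ref{Yuan-Harper theorem} already yields the isomorphism $L[Y_1,\ldots,Y_n]/\langle Y_1^p,\ldots,Y_n^p\rangle\cong R$ directly for any minimal generating set and coefficient field, so your dimension count there is not needed.
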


\begin{proof}
	Fix a coefficient field $L$ for $R$. If $I$ is generated by part of a minimal generating set for $\m$, then by Theorem \ref{Yuan-Harper theorem}, we may assume that $R=L[X_1,\ldots,X_n]/\langle X_1^p,\ldots,X_n^p\rangle$ and that $I$ is the ideal generated by the classes of $X_1,\ldots,X_r$ for some $r\leq n$. Then $R/I$ is isomorphic to $L$ if $r=n$ or  $L[X_{r+1},\ldots,X_n]/\langle X_{r+1}^p,\ldots,X_n^p\rangle$ if $r<n$. Hence, $R/I$ is differentiably simple, as we have shown in Example \ref{ex: example of diff simple}.
	
	Conversely, assume that $R/I$ is differentiably simple.	Take $x_1,\ldots,x_r\in I$, whose classes modulo $\m^2$ form a basis for the $L$-vector space $(I+\m^2)/\m^2$, and $x_{r+1},\ldots,x_n\in\m$ whose classes modulo $I+\m^2$ form a basis for the $L$-vector space $\m/(I+\m^2)$. 
	Then $\{x_1,\ldots,x_n\}$ is a minimal generating set for $\m$, and $x_{r+1},\ldots,x_n$ induce a minimal generating set for $\m/I\subset R/I$.
	Our suitable choice of the generating system enables us to assert that the following diagram of homomorphisms of $L$-algebras
	\begin{align*}
		\xymatrix{L[X_1,\ldots,X_n]\ar[r]^{\varphi}\ar[d]^{\pi}& L[X_{r+1},\ldots,X_n]\ar[d]^{\bar{\pi}}\\ R\ar[r]^{\bar{\varphi}} &R/I}
	\end{align*}
	where $\pi(X_i)=x_i$ for $i=1,\ldots, n$, $\bar{\pi}(X_i)=x_i+I$ for $i=r+1,\ldots,n$, and ${\varphi}$ and $\bar{\varphi}$ are the obvious quotient maps, is commutative.
	Since $R/I$ is differentiably simple, we have 
	%$\on{Ker}(\pi)=\langle X_1^p,\ldots,X_n^p\rangle$ and 
	$\ker(\bar{\pi})=\langle X_{r+1}^p,\ldots,X_n^p\rangle$ by Theorem \ref{Yuan-Harper theorem}. Therefore, by commutativity of the diagram, we have $\pi^{-1}(I)=\varphi^{-1}(\ker(\bar{\pi}))=\langle X_1,\ldots,X_r,X_{r+1}^p,\ldots,X_n^p\rangle$. Then it follows that $I=\langle x_1,\ldots,x_r\rangle$. In particular, $I$ is generated by part of a minimal generating set for $\m$.
\end{proof}

\begin{remark}
	Proposition \ref{prop: quotient of differentiably simple rings} follows also from the main result of \cite{Maloo93}, which describes the maximal differential ideals of a Noetherian local ring.
\end{remark}

\begin{proposition}\label{subring of a differentiably simple ring}
	Let $R$ be a differentiably simple ring, not necessarily of characteristic $p$, and $S\subset R$ a subring such that $R$ is a free $S$-module. Then $S$ is also differentiably simple.
\end{proposition}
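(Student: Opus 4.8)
The plan is to show that any differential ideal $J$ of $S$ becomes trivial after extension to $R$, and then to descend this conclusion back to $S$. To set up, fix an $S$-basis $\{e_i\}_{i\in I}$ of the free module $R$ and let $\pi_i\colon R\to S$ be the associated $S$-linear coordinate maps, characterized by $x=\sum_i\pi_i(x)e_i$. Since $J$ is an ideal of $S$, its extension decomposes as $JR=\bigoplus_i Je_i$; consequently, an element $x\in R$ lies in $JR$ if and only if $\pi_i(x)\in J$ for every $i$.

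The crucial point is that the coordinate maps convert derivations of $R$ into derivations of $S$. Given $D\in\on{Der}(R)$, I will check that $d_i:=\pi_i\circ D|_S\colon S\to S$ is a derivation: for $a,b\in S$ one has $D(ab)=aD(b)+bD(a)$ in $R$, and applying the $S$-linear map $\pi_i$ while pulling the scalars $a,b\in S$ outside yields $d_i(ab)=a\,d_i(b)+b\,d_i(a)$. This is where freeness enters, namely by supplying the $S$-linear projections, and it is needed precisely because the restriction $D|_S$ itself need not take values in $S$.

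Now suppose $J$ is differential. Then each $d_i$ preserves $J$, so for $j\in J$ and any $D\in\on{Der}(R)$ we get $\pi_i(D(j))=d_i(j)\in J$ for all $i$, whence $D(j)\in JR$ by the coordinate criterion above. An application of the Leibniz rule propagates this to $D(JR)\subset JR$, so $JR$ is a differential ideal of $R$. As $R$ is differentiably simple, $JR\in\{0,R\}$. Finally, since $R$ is free and nonzero over $S$ it is faithfully flat, so $JR\cap S=J$; thus $JR=0$ forces $J=0$, while $JR=R$ forces $1\in J$ and hence $J=S$. Therefore $S$ is differentiably simple.

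The step I expect to be the main obstacle is exactly this passage to $JR$: a derivation $D$ of $R$ need not restrict to a derivation of $S$, so one cannot test the differentiality of $J$ against it directly. The device of composing $D|_S$ with the $S$-linear coordinate projections $\pi_i$ — legitimate only because $R$ is free over $S$ — repairs this defect, and verifying membership in $JR$ coordinatewise is what allows the hypothesis on $J$ to take effect. As the argument is purely formal, no assumption on the characteristic is required, in agreement with the statement.
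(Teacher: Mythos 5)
Your proof is correct and follows essentially the same route as the paper: the coordinate maps $d_i=\pi_i\circ D|_S$ are exactly the derivations $D_\lambda$ the paper extracts from the expansion $D(s)=\sum_\lambda D_\lambda(s)r_\lambda$ in an $S$-basis, and both arguments then show $JR$ is differential and descend via freeness. The only cosmetic difference is your explicit appeal to faithful flatness ($JR\cap S=J$) where the paper simply invokes freeness for the final descent.
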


\begin{proof}
	Let $I$ be a differential ideal of $S$. We have to show that either $I=S$ or $I=0$.
	We first claim that the extended ideal $IR$ is a differential ideal of $R$. 
	In fact, let $D\colon R\to R$ be any derivation and $\{r_{\lambda}:\lambda\in \Lambda\}$ a basis for $R$ as an $S$-module. Then there are additive maps $D_{\lambda} \colon S\to S$ such that $D(s)=\sum_{\lambda\in \Lambda} D_{\lambda}(s)r_{\lambda}$ for all $s\in S$. One easily checks that each $D_{\lambda}$ is a derivation of $S$. As $I$ is a differential ideal of $S$, $D_{\lambda}(I)\subset I$ for all $\lambda\in \Lambda$ and it follows that $D(I)\subset IR$. Therefore, $D(IR)\subset IR$. As $D$ is an arbitrary derivation of $R$, the claim is proved.
	Finally, given that $R$ is differentiably simple, either $IR=R$ or $IR=0$, and since $R$ is a free $S$-module, we conclude that either $I=S$ or $I=0$, as we wanted to show.
\end{proof}

The argument in  the above proof appeared already in the proof of \cite[Theorem 11]{Yuan64} and was also used in \cite{Andre91} to prove one part of the following theorem.

\begin{theorem}[{\cite[Proposition 67]{Andre91}}]\label{th:flat homomorphism of diff simple rings}
	Let $(S,\m_{S})\to (R,\m_R)$ be a flat local homomorphism of Noetherian local rings such that $R^p\subset S$. If $R$ is differentiably simple, then so are $S$ and $R/\m_{S} R$.  
\end{theorem}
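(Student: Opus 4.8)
The plan is to treat the two conclusions separately, leaning on the two structural results already in hand: Proposition \ref{subring of a differentiably simple ring} (a subring over which a differentiably simple ring is free is again differentiably simple) for $S$, and Proposition \ref{prop: quotient of differentiably simple rings} (a quotient $R/I$ is differentiably simple exactly when $I$ is generated by part of a minimal system of generators of $\m_R$) for the fiber.

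First I set up the geometry of the map. Since $S\to R$ is a flat local homomorphism with $\m_S R\subseteq\m_R\neq R$, it is faithfully flat, hence injective, so I regard $S$ as a subring of $R$. By Lemma \ref{Yuan's lemma} the ideal $\m_R$ consists of nilpotent elements, so $R$ is Artinian; faithful flatness then forces $\dim S\le\dim R=0$, and as $S$ is Noetherian local it is Artinian, with $\m_S$ nilpotent. A flat module over a local ring whose maximal ideal is nilpotent is free: lift a basis of $R/\m_S R$ over $S/\m_S$ to elements of $R$; these generate $R$ by the nilpotent form of Nakayama's lemma, and flatness makes them independent. Thus $R$ is a free $S$-module, and Proposition \ref{subring of a differentiably simple ring} immediately yields that $S$ is differentiably simple. (This step uses only flatness and the differentiable simplicity of $R$, not the hypothesis $R^p\subseteq S$.)

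For the fiber $R/\m_S R$, by Proposition \ref{prop: quotient of differentiably simple rings} it suffices to show that $\m_S R$ is generated by part of a minimal system of generators of $\m_R$, equivalently that a minimal generating set of $\m_S$ stays independent modulo $\m_R^2$. I would prove this by induction on $m=\dim_{S/\m_S}\m_S/\m_S^2$, the case $m=0$ being trivial (then $S$ is a field and the fiber is $R$). For the inductive step, pick $y\in\m_S\setminus\m_S^2$; since $S$ is differentiably simple, $S/yS$ is differentiably simple with $\m_{S/yS}$ generated by $m-1$ elements, and $\on{ann}_S(y)=y^{p-1}S$ by Corollary \ref{cor: annihilator}. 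Tensoring $0\to\on{ann}_S(y)\to S\xrightarrow{y}S$ with the flat module $R$ gives $\on{ann}_R(y)=y^{p-1}R$. Granting the lemma below, $y\notin\m_R^2$, so $R/yR$ is differentiably simple by Proposition \ref{prop: quotient of differentiably simple rings}; moreover $R/yR$ is free over $S/yS$ (base change), the homomorphism is still flat and local, and $(R/yR)^p\subseteq S/yS$ (here the hypothesis $R^p\subseteq S$ is used). The induction hypothesis applied to $S/yS\hookrightarrow R/yR$ then shows that $(R/yR)\big/\m_{S/yS}(R/yR)=R/\m_S R$ is differentiably simple.

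The main obstacle is the converse of Corollary \ref{cor: annihilator}: \emph{if $f\in\m_R$ and $\on{ann}_R(f)=f^{p-1}R$, then $f\notin\m_R^2$.} I would prove this through the explicit model $R\cong L[X_1,\dots,X_n]/\langle X_1^p,\dots,X_n^p\rangle$ of Theorem \ref{Yuan-Harper theorem}. Regarding multiplication by $f$ as a nilpotent $L$-endomorphism of the $p^n$-dimensional space $R$ (note $f^p=0$), all its Jordan blocks have size at most $p$, so $\dim_L\on{ann}_R(f)$, which equals the number of blocks, is at least $p^{n-1}$, while $\dim_L f^{p-1}R$ equals the number of blocks of full size $p$. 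Since $f^{p-1}R\subseteq\on{ann}_R(f)$ always, the hypothesis $\on{ann}_R(f)=f^{p-1}R$ is equivalent to every Jordan block having size $p$, that is, to $R$ being a free module over the subring $L[f]\cong L[T]/\langle T^p\rangle$. It then remains to show that this freeness fails whenever $f\in\m_R^2$, and I expect this last implication to be the genuinely delicate point. When $f^{p-1}=0$ it is immediate, since then $f^{p-1}R=0$ while $\on{ann}_R(f)$ contains the nonzero socle of $R$; when $f^{p-1}\neq0$ one must produce an element of $\on{ann}_R(f)$ lying outside $f^{p-1}R$, which I would extract from the leading form of $f$ by a degree/length computation in the graded ring $R$ (for instance, elements of $\m_R^{\,e-d+1}$ annihilate $f$, where $d$ is the order of $f$ and $e=n(p-1)$ is the socle degree). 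The case analysis on the order $d$ of $f$ is where the real work concentrates; the rest of the argument is formal.
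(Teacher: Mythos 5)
Your overall architecture coincides with the paper's: the first half ($S$ is Artinian because $x^p=0$ on $\m_S$, hence $R$ is free over $S$, hence $S$ is differentiably simple by Proposition \ref{subring of a differentiably simple ring}) is exactly the argument given there, and so is the induction on $\on{embdim}(S)$ for the fiber, including the reduction of the inductive step to the single claim that an element $f\in\m_S\setminus\m_S^2$, which necessarily satisfies $\on{ann}_R(f)=f^{p-1}R$ by Corollary \ref{cor: annihilator} and flatness, must lie outside $\m_R^2$. The difference, and the problem, is how that claim is established. The paper proves it by observing that $R$ is a zero-dimensional complete intersection with $\on{ann}_R(f)$ principal, invoking Kunz's theorem (\cite[Proposition 3.3]{Kunz74}) to conclude that $R/fR$ is again a complete intersection, and then reading off from a minimal presentation that a representative $F$ of $f$ cannot lie in $\langle X_1,\ldots,X_n\rangle^2$. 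You instead reformulate the claim as: $R$ cannot be free over $L[f]\cong L[T]/\langle T^p\rangle$ when $f\in\m_R^2$ (your Jordan-block equivalence between $\on{ann}_R(f)=f^{p-1}R$ and this freeness is correct). But you do not prove the reformulated statement; you only sketch a ``degree/length computation'' and yourself flag it as the delicate point. This is a genuine gap, and it sits at exactly the step on which the whole theorem hinges.

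Concretely, the proposed count does not close even small cases. Take $p=2$, $n=3$, so $R=L[x,y,z]/\langle x^2,y^2,z^2\rangle$ with socle degree $e=3$, and let $f$ have order $d=2$. Your inclusion $\m_R^{\,e-d+1}\subseteq\on{ann}_R(f)$ gives $\dim_L\on{ann}_R(f)\geq\dim_L\m_R^2=4=p^{n-1}$, which is precisely the dimension $\on{ann}_R(f)$ would have if $R$ were free over $L[f]$; no contradiction follows from degree considerations alone, and one would have to produce an explicit annihilator outside $f^{p-1}R$ by other means, uniformly in $n$, $p$ and the (possibly non-homogeneous) element $f$. The paper's own remark after the proof is relevant here: the authors state that they were unable to find an elementary replacement for Kunz's theorem, whose proof rests on Gulliksen's theorem and Tate's method of adjoining variables to kill cycles, so the missing implication is likely to be genuinely hard rather than a routine case analysis on $d$. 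As it stands, your argument is complete only modulo this unproved lemma; to finish it along the paper's lines you should import Kunz's complete-intersection criterion at this point.
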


\begin{proof} 
	Since $R$ is differentiably simple, we have in particular that $x^p=0$ for any $x\in\m_S$.
	Thus, $S$ is Artinian, and flatness of $R$ over $S$ implies now that $R$ is a free $S$-module.
	Therefore, we can use Proposition \ref{subring of a differentiably simple ring} to deduce that $S$ is differentiably simple. 

	Now, we prove that $R/\m_S R$ is differentiably simple by induction on the embedded dimension of $S$, $\on{embdim}(S)$.
	If $\on{embdim}(S)=0$, then $\m_{S} R=0$ and there is nothing to prove.
	Suppose now that $\on{embdim}(S)>0$ and choose $f\in\m_{S}\setminus\m_{S}^2$. We claim that $f\in \m_R\setminus \m_R^2$.

	By Theorem \ref{Yuan-Harper theorem}, we can write $R$ as $L[[X_1,\ldots,X_n]]/\langle X_1^p,\ldots,X_n^p\rangle$ for some field $L$. Observe that Corollary \ref{cor: annihilator}, applied to $S$ and $f$, shows that $\on{ann}_S(f)=f^{p-1} S$, and given that $S\to R$ is flat, we also have $\on{ann}_R(f)=f^{p-1}R$. 
	Hence we have arrived to the situation where $R$ is a (zero-dimensional) complete intersection ring and the annihilator of $f$ in $R$ is a principal ideal. Under these conditions,
 	\cite[Proposition 3.3]{Kunz74} tells us that $R/(f)$ is also a complete intersection ring. In other words, if $F\in L[[X_1,\ldots,X_n]]$ is a representative of $f$, then the ideal $\langle X_1^p,\ldots,X_n^p,F\rangle\subset L[[X_1,\ldots,X_n]]$ is minimally generated by $n$ elements which can be chosen among the previous $n+1$ generators.
 	These can not be $X_1^p,\ldots,X_n^p$, since $f$ is not the zero element of $R$ (flatness of $R$ over $S$ implies that $S\to R$ is injective).
	Thus, we may assume without loss of generality that 
	$$\langle X_1^p,\ldots,X_n^p,F\rangle = \langle X_2^p,\ldots,X_n^p,F\rangle.$$
	Thus $X_1^p=GF+H_2X_2^p+\cdots +H_nX_n^p$ for some $G,H_2,\ldots,H_n\in L[[X_1,\ldots,X_n]]$. We use again that $\on{ann}_R(f)=f^{p-1}R$ and deduce that $G=UF^{p-1}+U_1 X_1^p+U_2 X_2^p+\ldots+U_n X_n^p$ for some $U,U_1,\ldots,U_n\in L[[X_1,\ldots,X_n]]$. Then it follows that 
	$$X_1^p\in \langle F^p, FX_1^p,X_2^p,\ldots,X_n^p\rangle.$$ 
	Viewing this in $\langle X_1,\ldots,X_n\rangle^p/\langle X_1,\ldots,X_n\rangle^{p+1}$, we see that necessarily $F\in \langle X_1,\ldots,X_n\rangle\setminus \langle X_1,\ldots,X_n\rangle^2$, hence $f\in\m_R\setminus \m_R^2$. This proves the claim.

	We finally set $\bar{S}:=S/fS$ and $\bar{R}:=R/fR$. The induced extension $\bar{S}\to \bar{R}$ is flat, it satisfies $\bar{R}^p\subset\bar{S}$ and $\bar{R}$ is differentiably simple by Proposition \ref{prop: quotient of differentiably simple rings}. Since $\on{embdim}(\bar{S})<\on{embdim}(S)$ we can use the inductive hypothesis to deduce that $\bar{R}/\m_{\bar{S}}\bar{R}$ is differentiably simple. Since $\bar{R}/\m_{\bar{S}}\bar{R}\cong R/\m_S R$, the ring $R/\m_S R$ is differentiably simple. This completes the proof.
\end{proof}

\begin{remark}
	Our proof of the fact that $R/\m_{S} R$ is differentiably simple \color{black} differs from the one given in \cite{Andre91}, which relies on a very strong result of Avramov (\cite[Theorem 1.1]{Avramov77}) about the homology of a general local flat extension. 
	The key point in our proof was to establish that $R/fR$ is a complete intersection ring. 
	We have used Kunz's paper \cite{Kunz74}, whose main result is based on a theorem of Gulliksen \cite[Proposition 1.4.9]{GullLev69}. Kunz also presents an alternative proof of his main result by using just the main technique in the proof of Gulliksen's theorem, which is Tate's method of adjoining variables to kill cycles \cite{Tate57}.  Given the simplicity of the ring $R$ one is tempted to claim that there should be a more elementary proof of Theorem \ref{th:flat homomorphism of diff simple rings}. However, we have not been able to find one.
\end{remark}

\begin{corollary}\label{cor: extending a minimal generating set of a subring}
	Let $(S,\m_S)\to (R,\m_R)$ be a flat extension of Noetherian differentiably simple rings such that $R^p\subset S$. Then any minimal generating set for the maximal ideal of $S$ can be extended to a minimal generating set for the maximal ideal of $R$.
\end{corollary}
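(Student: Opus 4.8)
The plan is to argue by induction on $\on{embdim}(S)=\dim_{\kappa(\m_S)}\m_S/\m_S^2$, reusing the inductive mechanism already set up in the proof of Theorem \ref{th:flat homomorphism of diff simple rings}. When $\on{embdim}(S)=0$ the ring $S$ is a field, $\m_S=0$, the empty family is the generating set to be extended, and there is nothing to do. So I assume $\on{embdim}(S)=r>0$ and fix a minimal generating set $\{s_1,\dots,s_r\}$ of $\m_S$; in particular $s_1\in\m_S\setminus\m_S^2$. The first input I would use is that $s_1$ stays a minimal generator downstream, i.e.\ $s_1\in\m_R\setminus\m_R^2$. This is precisely the claim proved inside the proof of Theorem \ref{th:flat homomorphism of diff simple rings}, whose argument applies verbatim to any element of $\m_S\setminus\m_S^2$; it is the step I expect to be the genuine heart of the matter, since it already packages the complete-intersection computation (via Kunz's theorem). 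Everything else is bookkeeping with cotangent spaces.

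Next I would pass to the quotients $\bar S:=S/s_1S$ and $\bar R:=R/s_1R$. Because $s_1$ is part of a minimal generating set of $\m_S$ (and of $\m_R$, by the previous paragraph), Proposition \ref{prop: quotient of differentiably simple rings} shows that $\bar S$ and $\bar R$ are again differentiably simple. The induced map $\bar S\to\bar R$ is local and flat (a base change of a flat map), hence injective, and $\bar R^p\subset\bar S$ since any $\bar x^p=\overline{x^p}$ lies in the image of $\bar S$ as $x^p\in R^p\subset S$. Moreover $\on{embdim}(\bar S)=r-1$, so the inductive hypothesis applies. The images $\bar s_2,\dots,\bar s_r$ of $s_2,\dots,s_r$ form a minimal generating set of $\m_{\bar S}=\m_S/s_1S$, so by induction they extend to a minimal generating set $\{\bar s_2,\dots,\bar s_r,\bar t_{r+1},\dots,\bar t_n\}$ of $\m_{\bar R}$, where $n=\on{embdim}(R)$; I would then choose lifts $t_{r+1},\dots,t_n\in\m_R$ of $\bar t_{r+1},\dots,\bar t_n$.

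Finally I would assemble the generators of $\m_R$ via the conormal sequence for the quotient by $s_1$. Since $s_1\notin\m_R^2$, the surjection $\m_R/\m_R^2\twoheadrightarrow\m_{\bar R}/\m_{\bar R}^2=\m_R/(\m_R^2+s_1R)$ has one-dimensional kernel spanned by the class of $s_1$, yielding a short exact sequence of $\kappa(\m_R)$-vector spaces
\[
0\to \kappa(\m_R)\,\overline{s_1}\to \m_R/\m_R^2\to \m_{\bar R}/\m_{\bar R}^2\to 0.
\]
As the classes of $s_2,\dots,s_r,t_{r+1},\dots,t_n$ map onto the chosen basis of $\m_{\bar R}/\m_{\bar R}^2$ while $s_1$ spans the kernel, lifting a basis through this sequence shows that $\{s_1,\dots,s_r,t_{r+1},\dots,t_n\}$ is a basis of $\m_R/\m_R^2$, hence a minimal generating set of $\m_R$ extending $\{s_1,\dots,s_r\}$, which closes the induction. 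I note that the assertion is, in essence, the injectivity of the cotangent map $\m_S/\m_S^2\otimes_{\kappa(\m_S)}\kappa(\m_R)\to\m_R/\m_R^2$ for a flat local homomorphism; I prefer the self-contained induction above, which stays within the paper's toolkit and isolates the only delicate point, namely that minimal generators of $\m_S$ remain outside $\m_R^2$.
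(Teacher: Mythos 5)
Your proof is correct, but it follows a genuinely different route from the paper's. The paper deduces the corollary in a few lines from the \emph{statement} of Theorem \ref{th:flat homomorphism of diff simple rings}: flatness makes $\{x_1,\dots,x_r\}$ a minimal generating set of the extended ideal $\m_S R$; the theorem gives that $R/\m_S R$ is differentiably simple; Proposition \ref{prop: quotient of differentiably simple rings} then says $\m_S R$ is generated by part of a minimal generating set of $\m_R$; and since all minimal generating sets of $\m_S R$ have the same cardinality, the $x_i$ themselves are part of a minimal generating set of $\m_R$. You instead re-run the induction on $\on{embdim}(S)$ that drives the proof of that theorem, extracting only its internal claim that an element of $\m_S\setminus\m_S^2$ stays outside $\m_R^2$ (which is indeed established there under exactly your hypotheses, via Corollary \ref{cor: annihilator} and Kunz's complete-intersection argument), and then reassembling generators through the cotangent exact sequence for the quotient by $s_1$. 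All your individual steps check out: $\bar S\to\bar R$ is flat local of exponent one between differentiably simple rings (Proposition \ref{prop: quotient of differentiably simple rings} applies on both sides because $s_1$ is a minimal generator of $\m_S$ and, by the key claim, of $\m_R$), and the dimension bookkeeping $\on{embdim}(\bar S)=r-1$, $\on{embdim}(\bar R)=n-1$ is right. What the paper's route buys is brevity and modularity, using the theorem as a black box with no new induction; what yours buys is that it never invokes the differentiable simplicity of the fiber $R/\m_S R$, only the single substantive fact $f\in\m_R\setminus\m_R^2$, which usefully isolates where the real work lies. The one stylistic caveat is that you cite a step buried inside another proof rather than a stated result; it would be cleaner to extract that claim as a standalone lemma before using it.
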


\begin{proof}
	Let $\{x_1,\ldots,x_r\}$ be a minimal generating set for $\m_S$.
	Since $S\to R$ is flat, $\{x_1,\ldots,x_r\}$ is also a minimal generating set for the extended ideal $\m_S R$.
	Now, by assumption, $R$ is differentiably simple, and by Theorem \ref{th:flat homomorphism of diff simple rings}, $R/\m_S R$ is differentiably simple. 
	It follows from Proposition \ref{prop: quotient of differentiably simple rings} that $\m_S R$ is generated by part of a minimal generating set for $\m_R$.
	Since $\{x_1,\ldots,x_r\}$ is a minimal generating set for $\m_S R$ and since any two minimal generating sets for $\m_S R$ have the same length, we obtain that $\{x_1,\ldots,x_r\}$ is part of minimal generating set for $\m_R$.
\end{proof}

\section{Galois extensions of exponent one}\label{section: flat families}
Throughout this section, all rings are assumed to be of characteristic $p$.

In Section \ref{section: diff simple rings}, we have studied Noetherian differentiably simple rings. We now consider flat families of Noetherian differentiably simple rings. This notion was introduced by Yuan in \cite{Yuan70}, where they are presented under the name of purely inseparable Galois extension of exponent one.
In Section \ref{subsection: p-basis}, we introduce some preliminaries about $p$-basis. In Section \ref{subsection: Galois extensions}, we review the concept of purely inseparable Galois extension. Finally, in Section \ref{section: automorphisms}, we analyze certain variations of the automorphism group scheme of some specific Galois extensions.
The whole section is expository and serves as an introduction for the next and final section, where we introduce the Yuan scheme.
Most of the material in this section is included in \cite[\S 5 and \S 15]{Kunz86}.

\subsection{$p$-basis for ring extensions}\label{subsection: p-basis}

A ring extension $A\subset C$ is said to have {\em exponent one} if $x^p\in A$ for all $x\in C$, i.e., $C^p\subset A$. 
\begin{definition}\label{def: p-basis}
	Let $A\subset C$ be a finite ring extension of exponent one. 
	A collection $\{x_1,\ldots,x_n\}\subset C$ is called a \textit{$p$-basis} of $C$ over $A$ if $C$ is a free $A$-module with basis $\{x_1^{\alpha_1}\cdots x_n^{\alpha_n}: 0\leq \alpha_i<p\}$.  In other words,  if the $A$-algebra homomorphism $A[X_1,\ldots,X_n]\to C$ which maps $X_i$ to $x_i$ induces an isomorphism
\begin{align*}
A[X_1,\ldots,X_n]/\langle X_1^p-x_1^p,\ldots,X_n^p-x_n^p\rangle\cong C.
\end{align*}	
\end{definition}

For example, the existence of $p$-basis is guaranteed  when $A$ and $C$ are fields, and more generally when $A$ and $C$ are regular local rings (see \cite{KN82}).

The following lemma, that collects some basic properties of $p$-basis, is straightforward.
\begin{lemma}\label{lem: elementary properties of p-basis}
	Let $A\subset C$ be a ring extension of exponent one.
	\begin{enumerate}[(i)]
	\item If $\{x_1,\ldots,x_n\}$ is a $p$-basis of $C$ over $A$, then for any $a_1,\ldots,a_n\in A$, $\{x_1-a_1,\ldots,x_n-a_n\}$ is also a $p$-basis of $C$ over $A$.
	\item If $\{x_1,\ldots,x_n\}$ is a $p$-basis of $C$ over $A$ and $A'$ is an $A$-algebra, then $\{x_1\otimes 1,\ldots, x_n\otimes 1\}\subset C\otimes_A A'$ is  a $p$-basis for $C\otimes_A A'$ over $A'$. 
	\item Let $B$ be an intermediate ring, $A\subset B\subset C$. If $\{x_1,\ldots,x_r\}$ is a $p$-basis of $C$ over $B$ and $\{x_{r+1},\ldots,x_n\}$ is a $p$-basis of $B$ over $A$, then $\{x_1,\ldots,x_n\}$ is a $p$-basis of $C$ over $A$.
	\end{enumerate}
\end{lemma}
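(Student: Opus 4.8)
The plan is to treat the three parts independently, in each case reducing to the isomorphism characterization of a $p$-basis from Definition \ref{def: p-basis}, or to the equivalent ``free module with a prescribed basis'' formulation, whichever is more convenient.

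For (i) I would exploit that the shift is induced by a polynomial automorphism. Set $y_i := x_i - a_i$; since we are in characteristic $p$ and $a_i \in A$, the Frobenius identity gives $y_i^p = x_i^p - a_i^p \in A$, so the new $p$-th powers still lie in $A$. Let $\pi \colon A[X_1,\ldots,X_n]\to C$, $X_i\mapsto x_i$, be the surjection with kernel $I=\langle X_1^p-x_1^p,\ldots,X_n^p-x_n^p\rangle$ furnished by the hypothesis. Consider the $A$-algebra automorphism $\Phi$ of $A[X_1,\ldots,X_n]$ defined by $\Phi(X_i)=X_i-a_i$. Then $\pi\circ\Phi$ is exactly the surjection sending $X_i\mapsto y_i$, and its kernel is $\Phi^{-1}(I)$. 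A direct computation using $(X_i+a_i)^p=X_i^p+a_i^p$ shows that $\Phi^{-1}(X_i^p-x_i^p)=X_i^p-y_i^p$, whence $\Phi^{-1}(I)=\langle X_1^p-y_1^p,\ldots,X_n^p-y_n^p\rangle$. This is precisely the presentation witnessing that $\{y_1,\ldots,y_n\}$ is a $p$-basis of $C$ over $A$.

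For (ii) I would base change the presentation. Writing $C\cong A[X_1,\ldots,X_n]/I$ with $I$ as above, right-exactness of $-\otimes_A A'$ yields $C\otimes_A A'\cong A'[X_1,\ldots,X_n]/IA'[X_1,\ldots,X_n]$. Since the image of $x_i^p\in A$ in $A'$ equals $(x_i\otimes 1)^p$ in $C\otimes_A A'$, the extended ideal is $\langle X_1^p-(x_1\otimes1)^p,\ldots,X_n^p-(x_n\otimes1)^p\rangle$, which is exactly the presentation showing that $\{x_1\otimes1,\ldots,x_n\otimes1\}$ is a $p$-basis of $C\otimes_A A'$ over $A'$. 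Equivalently, one may simply observe that base change preserves a free module together with a chosen basis, and that $(x_i\otimes1)^p\in A'$. For (iii) I would invoke transitivity of freeness: the hypotheses say $C$ is free over $B$ on the monomials $x_1^{\alpha_1}\cdots x_r^{\alpha_r}$ with $0\le\alpha_i<p$, and $B$ is free over $A$ on the monomials $x_{r+1}^{\alpha_{r+1}}\cdots x_n^{\alpha_n}$ with $0\le\alpha_i<p$. Hence $C$ is free over $A$ on the pairwise products of these two families, which are precisely the monomials $x_1^{\alpha_1}\cdots x_n^{\alpha_n}$ with $0\le\alpha_i<p$. The exponent-one condition $C^p\subset A$ required by the definition is part of the blanket hypothesis (and would anyway follow from $B^p\subset C^p\subset A$), so $\{x_1,\ldots,x_n\}$ is a $p$-basis of $C$ over $A$.

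None of the three parts presents a genuine obstacle, as the authors already signal by calling the lemma straightforward. The only computation requiring care is the characteristic-$p$ check in (i), namely that translating a variable by a scalar carries the defining relation $X_i^p-x_i^p$ to $X_i^p-y_i^p$; the Frobenius identity $(X_i+a_i)^p=X_i^p+a_i^p$ makes this immediate, and the remaining arguments are the standard facts that presentations behave well under base change and that freeness is transitive.
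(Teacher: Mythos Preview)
Your proof is correct. The paper does not actually supply a proof of this lemma; it merely remarks that it ``is straightforward,'' and your arguments for (i)--(iii) are exactly the routine verifications one would expect (a polynomial translation automorphism in characteristic $p$, base change of a presentation, and transitivity of free bases).
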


Let $A\subset C$ be a finite extension of exponent one which admits a $p$-basis. We say that the extension is {\em split} if $C\cong A[X_1,\ldots,X_n]/\langle X_1^p,\ldots,X_n^p\rangle$ as $A$-algebras. In other words, if there exists a $p$-basis $\{x_1,\ldots,x_n\}$ such that $x_i^p=0$ for $i=1,\ldots, n$. A $p$-basis with this property will be called {\em a splitting $p$-basis}.

\begin{proposition}\label{prop: extension with p-basis becomes trivial}
	Let $A\subset C$ be a ring extension of exponent one. Assume that $A\subset C$ has a $p$-basis $\{x_1,\ldots,x_n\}$.
	 Let  $A\subset A'$ be a ring extension such that, for each $i=1,\ldots,n$, there exists $y_i\in A'$ with $y_i^p=x_i^p$. Let $z_i:=x_i\otimes 1-1\otimes y_i$. Then $\{z_1,\ldots,z_n\}$ is a splitting $p$-basis of $A'\subset C\otimes_A A'$.
\end{proposition}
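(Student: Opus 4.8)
The plan is to derive the $p$-basis property from the two elementary facts in Lemma \ref{lem: elementary properties of p-basis} and then verify the splitting condition $z_i^p=0$ by a direct Frobenius computation, exploiting the standing assumption that all rings have characteristic $p$.

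First I would pass to $C\otimes_A A'$ and use the base change property. By part (ii) of Lemma \ref{lem: elementary properties of p-basis}, since $\{x_1,\ldots,x_n\}$ is a $p$-basis of $C$ over $A$, the set $\{x_1\otimes 1,\ldots,x_n\otimes 1\}$ is a $p$-basis of $C\otimes_A A'$ over $A'$. The base ring of this extension is $A'$, embedded as $1\otimes A'$, and each $y_i$ lies in $A'$; hence $z_i=(x_i\otimes 1)-(1\otimes y_i)$ is precisely a $p$-basis element shifted by a base-ring element. Applying part (i) of Lemma \ref{lem: elementary properties of p-basis} then shows at once that $\{z_1,\ldots,z_n\}$ is again a $p$-basis of $C\otimes_A A'$ over $A'$.

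It then remains to check that this $p$-basis splits, i.e.\ that $z_i^p=0$ for each $i$. Here I would use that the Frobenius is additive on the commutative characteristic-$p$ ring $C\otimes_A A'$, so that
$$z_i^p=(x_i\otimes 1-1\otimes y_i)^p=x_i^p\otimes 1-1\otimes y_i^p.$$
Since $x_i^p\in A$, it may be moved across the tensor product, giving $x_i^p\otimes 1=1\otimes x_i^p$ (identifying $x_i^p$ with its image in $A'$). By hypothesis $y_i^p=x_i^p$ in $A'$, so the two summands coincide and $z_i^p=0$.

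Combining the two parts, $\{z_1,\ldots,z_n\}$ is a $p$-basis of $A'\subset C\otimes_A A'$ whose $p$-th powers all vanish, which is exactly a splitting $p$-basis. The argument is essentially immediate once the two properties of $p$-bases are in hand; the only point demanding a little care, rather than a genuine obstacle, is the bookkeeping across the tensor product, namely justifying $x_i^p\otimes 1=1\otimes x_i^p$ via the identification of $x_i^p\in A$ with its image in $A'$ so that the cancellation with $1\otimes y_i^p$ is legitimate.
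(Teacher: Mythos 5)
Your proposal is correct and follows the same route as the paper's own proof: base change via Lemma \ref{lem: elementary properties of p-basis}.(ii), translation by base-ring elements via part (i), and the Frobenius computation $z_i^p=x_i^p\otimes 1-1\otimes y_i^p=0$. The only difference is that you spell out the tensor bookkeeping $x_i^p\otimes 1=1\otimes x_i^p$ explicitly, which the paper leaves implicit.
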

\begin{proof}
	By Lemma \ref{lem: elementary properties of p-basis}.(ii), $\{x_1\otimes 1,\ldots,x_n\otimes 1\}$ is a $p$-basis for $C\otimes_AA'$ over $A'$. Let $z_i=x_i\otimes 1-1\otimes y_i\in C\otimes_A A'$. By Lemma \ref{lem: elementary properties of p-basis}.(i), $\{z_1,\ldots,z_n\}$ is also a $p$-basis for $C\otimes_A A'$ over $A'$. Since $x_i^p=y_i^p\in A$, we have $z_i^p=0$, so $\{z_1,\ldots,z_n\}$ is a splitting $p$-basis. 
\end{proof}

\begin{remark}
	The hypothesis, for each $i=1,\dots,n$ there exists $y_i\in A'$ with $y_i^p=x_i^p$, is true if we take $A'$ to be $C$. It also applies if $A=k$ is a field and $A'$ is any other field including $k^{1/p}$. In particular, if $k$ is an algebraically closed field, then any finite extension $k\subset C$ admitting a $p$-basis is split.
\end{remark}

In Proposition \ref{prop: p-basis vs diff basis finite case} below, we review the fact that $\{x_1,\ldots,x_n\}$ is a $p$-basis for $A\subset C$ if and only if $\{dx_1,\ldots,dx_n\}$ is a basis of the $C$-module $\Omega_{C/A}$. We shall need the following result.

\begin{lemma}\label{lem: trivial omega}
	Let $C'\subset C$ be a finite extension of rings such that $C^p\subset C'$ and $\Omega_{C/C'}=0$. Then $C=C'$.
\end{lemma}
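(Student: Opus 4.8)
The plan is to prove $C=C'$ by showing that the finite $C'$-module $C$ is generated by $1$, equivalently that $C/C'=0$. Since $C^p\subset C'$, the extension $C'\subset C$ is integral and in fact every element of $C$ satisfies a degree-$p$ equation $X^p-c=0$ with $c=x^p\in C'$. This means $C$ is generated over $C'$ by finitely many elements, each purely inseparable of exponent one over $C'$; geometrically $\Spec(C)\to\Spec(C')$ is a finite, purely inseparable (radicial) universal homeomorphism. The vanishing of $\Omega_{C/C'}$ should then force this map to be an isomorphism, because a nontrivial purely inseparable extension always contributes nonzero differentials.

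First I would reduce to a statement about a single generator, or better, argue directly with the module structure. The cleanest approach is to show that the hypotheses give $C=C'+\m\cdot C$ for an appropriate nil ideal and then invoke Nakayama. Concretely, pick any $x\in C$; I want to show $x\in C'$. Consider the subextension $C'\subset C'[x]\subset C$. Since $x^p\in C'$, the element $x$ satisfies $X^p-x^p$, so $C'[x]$ is a quotient of $C'[X]/\langle X^p-x^p\rangle$. If $x\notin C'$, I claim $dx\neq 0$ in $\Omega_{C/C'}$, contradicting $\Omega_{C/C'}=0$. To make this rigorous I would use the conormal/differential sequence: the surjection $C'[X]/\langle X^p-x^p\rangle\to C'[x]$ and the fact that in $\Omega_{(C'[X]/\langle X^p-x^p\rangle)/C'}$ the differential $dX$ is nonzero (since $d(X^p-x^p)=pX^{p-1}dX=0$ imposes no relation in characteristic $p$), so the free rank is governed by whether $x$ already lies in $C'$.

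The cleaner packaging, which I expect to be the actual argument, is to use $\Omega_{C/C'}=0$ together with finiteness to conclude $C'\subset C$ is unramified, hence (being also flat or at least by a direct nilpotent-ideal argument) étale; but a purely inseparable unramified extension must be trivial. Since we may not have flatness in hand, I would instead work with the kernel $\mathfrak{a}$ of the multiplication map $C\otimes_{C'}C\to C$ and recall $\Omega_{C/C'}\cong \mathfrak{a}/\mathfrak{a}^2$. The vanishing of $\Omega_{C/C'}$ says $\mathfrak{a}=\mathfrak{a}^2$; since $C$ is finite over $C'$, the ideal $\mathfrak{a}$ is finitely generated, and because the extension is integral with $C^p\subset C'$ the ideal $\mathfrak{a}$ is nilpotent (every $x\otimes 1-1\otimes x$ has $p$-th power $x^p\otimes 1-1\otimes x^p=0$). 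A finitely generated idempotent nilpotent ideal is zero, so $\mathfrak{a}=0$, whence $C\otimes_{C'}C\to C$ is an isomorphism and the map $\Spec(C)\to\Spec(C')$ is both a closed immersion onto its image and an isomorphism, giving $C=C'$.

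The main obstacle is justifying that $\mathfrak{a}$ is nilpotent and finitely generated cleanly, and then correctly deducing $\mathfrak{a}=0$ from $\mathfrak{a}=\mathfrak{a}^2$ via a Nakayama-type argument on the nilpotent ideal (the standard $\mathfrak{a}=\mathfrak{a}^2$ with $\mathfrak{a}$ nilpotent forces $\mathfrak{a}=0$, since $\mathfrak{a}=\mathfrak{a}^2=\cdots=\mathfrak{a}^N=0$). I would double-check that $\mathfrak{a}$ is finitely generated as an ideal of $C\otimes_{C'}C$: it is generated by the elements $x_i\otimes 1-1\otimes x_i$ for a finite generating set $x_1,\dots,x_m$ of $C$ as a $C'$-module (or $C'$-algebra), which is standard. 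With nilpotence of each such generator in hand, the product of sufficiently many of them vanishes, so $\mathfrak{a}^N=0$ for $N$ large. Combining $\mathfrak{a}=\mathfrak{a}^2$ with $\mathfrak{a}^N=0$ yields $\mathfrak{a}=0$, completing the proof.
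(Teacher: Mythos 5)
Your argument is correct in substance but takes a genuinely different route from the paper. The paper reduces, via Nakayama for the finite $C'$-module $C$, to the case where $C'$ is a field $k$; there $C$ is local, its residue field $K$ satisfies $\Omega_{K/k}=0$ and $K^p\subset k$, hence $K=k$, and an explicit surjective derivation $C\to\m_C/\m_C^2$ forces $\m_C=\m_C^2=0$. You instead work globally with the diagonal ideal $\mathfrak{a}=\ker(C\otimes_{C'}C\to C)$: the hypothesis $\Omega_{C/C'}\cong\mathfrak{a}/\mathfrak{a}^2=0$ gives idempotency, while the exponent-one hypothesis gives nilpotency, since $(x\otimes 1-1\otimes x)^p=x^p\otimes 1-1\otimes x^p=0$ (this uses that the rings have characteristic $p$, which is in force throughout the section), and a finitely generated idempotent nilpotent ideal vanishes. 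This nicely isolates where each hypothesis enters and avoids the fiberwise reduction. The one loose point is the final passage from $\mathfrak{a}=0$ to $C=C'$: ``the map is a closed immersion'' quietly invokes the nontrivial fact that a finite monomorphism of schemes is a closed immersion. A self-contained finish along the lines you already set up: $\mathfrak{a}=0$ means $x\otimes 1=1\otimes x$ for all $x\in C$, so $c\mapsto 1\otimes c$ is surjective onto $C\otimes_{C'}C$ (as $x\otimes y=1\otimes xy$); tensoring $0\to C'\to C\to C/C'\to 0$ with $C$ then gives $(C/C')\otimes_{C'}C=0$, and since $\Spec(C)\to\Spec(C')$ is surjective (the extension is integral) this yields $(C/C')\otimes_{C'}\kappa(\p')=0$ for every prime $\p'\subset C'$, whence $C/C'=0$ by Nakayama applied to the finite $C'$-module $C/C'$. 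With that step filled in, your proof is complete.
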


\begin{proof}
	Since $C$ is a finite $C'$-module,  in order to show that $C=C'$ it is enough to show that for all $\p'\in\Spec(C')$, the inclusion $\kappa(\p')\hookrightarrow C\otimes_{C'}\kappa(\p')$ is an equality, by Nakayama's Lemma. 
	Hence to prove the proposition we may assume that $C'$ is a field $k$. 
	Thus the assumption is that $C$ is a finite $k$-algebra such that $C^p\subset k$ (in particular $C$ is local) and $\Omega_{C/k}=0$, and we have to prove that $C=k$.
	Let $\m$ and $K$ be the maximal ideal and the residue field of $C$. 
	Note that $\Omega_{C/k}=0$ implies $\Omega_{K/k}=0$, and since $K^p\subset k$ we have $K=k$. 
	Thus, $C=k\oplus \m$. 
	Now let $D$ be the  composition of the projection $C\to \m$ followed by the quotient map $\m\to\m/\m^2$. Observe that $D$ is a surjective $k$-derivation from $C$ to $\m/\m^2$. Since $\Omega_{C/k}=0$ it must be $D=0$; thus $\m=\m^2$. Finally, by Nakayama's Lemma, $\m=0$ and the proof is complete.
\end{proof}

\begin{proposition}\label{prop: p-basis vs diff basis finite case}
	Let $A\subset C$ be a finite extension of exponent one and let $x_1,\ldots,x_n\in C$. Then $\{x_1,\ldots,x_n\}$ is a $p$-basis of $C$ over $A$ if and only if $\Omega_{C/A}$ is a free $C$-module with basis $\{dx_1,\ldots,dx_n\}$. 
	%In this case, $n$ is the rank of the $C$-module $\Omega_{C/A}$.
\end{proposition}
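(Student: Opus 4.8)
The plan is to connect both implications to the explicit presentation $C'=A[X_1,\ldots,X_n]/\langle X_1^p-x_1^p,\ldots,X_n^p-x_n^p\rangle$, using the conormal sequence and Lemma \ref{lem: trivial omega} for surjectivity and the dual-basis derivation trick from Example \ref{ex: example of diff simple} for freeness. The forward implication is quick: if $\{x_1,\ldots,x_n\}$ is a $p$-basis then $C\cong A[X_1,\ldots,X_n]/\langle X_i^p-x_i^p\rangle$, and since $\Omega_{C/A}$ is the cokernel of the Jacobian of this presentation, I only need that each relation has vanishing differential. Indeed $d(X_i^p-x_i^p)=pX_i^{p-1}\,dX_i-0=0$ because the characteristic is $p$ and $x_i^p\in A$. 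Hence $\Omega_{C/A}=\bigoplus_i C\,dX_i$ is free with basis $\{dX_1,\ldots,dX_n\}$, which corresponds to $\{dx_1,\ldots,dx_n\}$ under the isomorphism.

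For the converse, assume $\Omega_{C/A}$ is free with basis $\{dx_1,\ldots,dx_n\}$ and consider the $A$-algebra map $\psi\colon C'\to C$, $X_i\mapsto x_i$, which is well defined since $x_i^p\in A$. First I would prove that $\psi$ is surjective using the conormal sequence for $A\to C'\xrightarrow{\psi}C$:
\begin{align*}
\Omega_{C'/A}\otimes_{C'}C\longrightarrow \Omega_{C/A}\longrightarrow \Omega_{C/C'}\longrightarrow 0.
\end{align*}
By the forward implication $\Omega_{C'/A}$ is free with basis $\{dX_i\}$, so the left term is free over $C$ on the $dX_i\otimes 1$, and the first map sends $dX_i\otimes 1\mapsto dx_i$. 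As the $dx_i$ are assumed to form a basis of $\Omega_{C/A}$, this map carries a basis to a basis, hence is an isomorphism and $\Omega_{C/C'}=0$. Setting $\tilde C:=\psi(C')=A[x_1,\ldots,x_n]\subset C$ we have $\Omega_{C/\tilde C}=\Omega_{C/C'}=0$, while $C^p\subset A\subset\tilde C$ and $C$ is finite over $\tilde C$; Lemma \ref{lem: trivial omega} then forces $C=\tilde C$, so $\psi$ is surjective and the $p^n$ monomials $x^\alpha$ with $0\le\alpha_i<p$ generate $C$ as an $A$-module.

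It remains to show these monomials are $A$-independent. Since $\Omega_{C/A}$ is free on $\{dx_i\}$, the dual basis supplies $A$-derivations $\partial_1,\ldots,\partial_n$ of $C$ with $\partial_i(x_j)=\delta_{ij}$, whence the Leibniz rule gives $\partial_i(x^\alpha)=\alpha_i x^{\alpha-e_i}$. Given a nontrivial relation $\sum_\alpha c_\alpha x^\alpha=0$ with $c_\alpha\in A$, I would choose $\beta$ maximal in the componentwise order among the indices with $c_\beta\neq 0$ and apply $\partial^\beta=\partial_1^{\beta_1}\circ\cdots\circ\partial_n^{\beta_n}$; exactly as in assertion (*) of Example \ref{ex: example of diff simple}, every term with $\alpha\not\ge\beta$ vanishes and every term with $\alpha>\beta$ vanishes by maximality, leaving $c_\beta\,\beta_1!\cdots\beta_n!=0$ in $C$. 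Because $0\le\beta_i<p$ the factor $\beta_1!\cdots\beta_n!$ is a unit of $\mathbb{F}_p\subset A$, and $A\hookrightarrow C$, so $c_\beta=0$, a contradiction. Thus the monomials form a free $A$-basis and $\{x_1,\ldots,x_n\}$ is a $p$-basis.

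I expect the genuinely delicate step to be the surjectivity argument: one must verify that the conormal-sequence map really sends the free basis $dX_i\otimes 1$ to the hypothesized basis $dx_i$ (so that it is an isomorphism and $\Omega_{C/C'}$ vanishes), and then pass to the image subring $\tilde C$ in order to check the hypotheses $C^p\subset\tilde C$ and finiteness needed to apply Lemma \ref{lem: trivial omega}. By contrast, the freeness/independence step is essentially a reprise of the computation already carried out in Example \ref{ex: example of diff simple}, the only new ingredient being that the formal partial derivatives there are replaced by the dual derivations $\partial_i$ produced by the freeness of $\Omega_{C/A}$.
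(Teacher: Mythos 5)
Your proof is correct, and for the hard (converse) direction it follows the paper's route exactly: show the conormal map $\Omega_{C'/A}\otimes_{C'}C\to\Omega_{C/A}$ is onto because it carries $dX_i\otimes 1$ to the basis element $dx_i$, conclude $\Omega_{C/C'}=0$, and invoke Lemma \ref{lem: trivial omega} to force $C=A[x_1,\ldots,x_n]$. The only real difference is that where the paper twice cites \cite[Proposition 5.6]{Kunz86} --- once for the forward implication and once for the fact that $x_1,\ldots,x_n$ form a $p$-basis of the subring they generate --- you prove these directly, via the Jacobian computation $d(X_i^p-x_i^p)=p X_i^{p-1}dX_i=0$ and the dual-derivation argument with a componentwise-maximal multi-index $\beta$ (where $\beta_1!\cdots\beta_n!$ is a unit since $0\le\beta_i<p$); both substitute arguments are sound, so your version is a self-contained unpacking of the same proof.
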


\begin{proof}
	If $\{x_1,\ldots,x_n\}$ is a $p$-basis of $C$ over $A$, then $\{dx_1,\ldots,dx_n\}$ is a basis of  $\Omega_{C/A}$ as $C$-module by \cite[Proposition 5.6]{Kunz86}. 
	Conversely, assume that $\{dx_1,\ldots, dx_n\}$ is a basis of $\Omega_{C/A}$ as $C$-module and let $C':=A[x_1,\ldots,x_n]$. By \cite[Proposition 5.6]{Kunz86},
	$\{x_1,\ldots,x_n\}$ is a $p$-basis of $C'$  over $A$. 
	By the first implication, $dx_1,\ldots,dx_n$ generate $\Omega_{C'/A}$ and so the natural map $\Omega_{C'/A}\otimes_{C'}C\to \Omega_{C/A}$ is surjective (here, by abuse of notation, we denote by $d$ both the universal derivations of $C'$ and $C$ over $A$). It follows from the natural exact sequence $\Omega_{C'/A}\otimes_{C'}C\to\Omega_{C/A}\to \Omega_{C/C'}\to 0$ that $\Omega_{C/C'}=0$. Finally, $C=C'$ by Lemma \ref{lem: trivial omega}. This completes the proof of the converse.
\end{proof}

The next proposition and its corollary relate the notions of differentiably simple rings and $p$-basis.

\begin{proposition}\label{prop:p-basis over a field}
	Let $k\subset C$ be a finite extension of exponent one such that $k$ is a field. Then $C$ has $p$-basis over $k$ if and only if $C$ is differentiably simple.
\end{proposition}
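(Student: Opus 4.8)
The plan is to prove the two implications separately, drawing in each direction on the structural results already established. Observe first that, since $C$ is a finite extension of the field $k$, it is a finite-dimensional $k$-vector space and hence Artinian; in particular it is Noetherian, so that Harper--Yuan (Theorem~\ref{Yuan-Harper theorem}) is available, and it is a free (indeed finite) $k$-module, so that the base-change lemmas apply.

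For the implication ``$p$-basis $\Rightarrow$ differentiably simple'', suppose $\{x_1,\dots,x_n\}$ is a $p$-basis of $C$ over $k$ and set $a_i:=x_i^p\in k$. The difficulty here is that the extension need not be split: one cannot in general shift $x_i$ by an element of $k$ to arrange $x_i^p=0$, since $a_i$ may fail to have a $p$-th root in $k$ (for example $C=k[X]/(X^p-t)$ with $k=\mathbb{F}_p(t)$ is itself a field). I would remove this obstruction by base change. Choose a field extension $k\subset k'$ in which each $a_i$ acquires a $p$-th root $y_i$ (e.g. inside an algebraic closure of $k$, or $k'=k^{1/p}$). By Proposition~\ref{prop: extension with p-basis becomes trivial} the ring $C':=C\otimes_k k'$ has a splitting $p$-basis over $k'$, so $C'\cong k'[Z_1,\dots,Z_n]/\langle Z_1^p,\dots,Z_n^p\rangle$, which is differentiably simple by Example~\ref{ex: example of diff simple}. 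Since $k'$ is a free $k$-module, $C'=C\otimes_k k'$ is a free $C$-module and $C\hookrightarrow C'$; hence Proposition~\ref{subring of a differentiably simple ring} applies and shows that $C$ is differentiably simple.

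For the converse, assume $C$ is differentiably simple. By the opening remark $C$ is Noetherian, so Theorem~\ref{Yuan-Harper theorem} applies and produces a presentation $C\cong L[X_1,\dots,X_n]/\langle X_1^p,\dots,X_n^p\rangle$ over a \emph{coefficient field} $L$. The main subtlety is that $L$ need not coincide with the prescribed base field $k$, and I would reconcile the two as follows. Because $C^p\subset k$ by the exponent-one hypothesis, every subfield of $C$ containing $k$ automatically contains $C^p$; hence a maximal subfield $L\supseteq k$ is, by Lemma~\ref{Yuan's lemma}(ii), a coefficient field. With this choice the images $\{x_1,\dots,x_n\}$ of the $X_i$ form a splitting $p$-basis of $C$ over $L$, while $k\subset L$ is a finite purely inseparable field extension of exponent one (since $L^p\subset C^p\subset k$) and therefore admits a $p$-basis $\{w_1,\dots,w_m\}$ by the classical theory of $p$-bases for fields. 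Transitivity of $p$-bases, Lemma~\ref{lem: elementary properties of p-basis}(iii) applied to $k\subset L\subset C$, then yields that $\{x_1,\dots,x_n,w_1,\dots,w_m\}$ is a $p$-basis of $C$ over $k$, completing the argument.

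The steps I expect to be routine are the base-change computation and the Artinian observation. The one genuinely delicate point is the passage, in the converse, from a presentation over the intrinsic coefficient field $L$ to one over the prescribed base $k$: this is exactly where the finite purely inseparable extension $L/k$ and the transitivity of $p$-bases must be brought in, and choosing $L$ to contain $k$ is what makes that passage work.
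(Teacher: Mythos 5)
Your argument is correct and follows essentially the same route as the paper: base change to $k^{1/p}$ (via Proposition~\ref{prop: extension with p-basis becomes trivial}, Example~\ref{ex: example of diff simple}, and Proposition~\ref{subring of a differentiably simple ring}) for one direction, and Harper--Yuan over a coefficient field $L\supseteq k$ combined with the existence of $p$-bases for field extensions and Lemma~\ref{lem: elementary properties of p-basis}(iii) for the converse. The one point you flag as delicate --- choosing the coefficient field to contain $k$ --- is handled in the paper in exactly the way you propose, via Lemma~\ref{Yuan's lemma}(ii).
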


\begin{proof}
	If $C$ has $p$-basis over $k$, then $C\otimes_k k^{1/p}\cong k^{1/p}[X_1,\ldots,X_n]/\langle X_1^p,\ldots,X_n^p\rangle$ for some $n$, by Proposition \ref{prop: extension with p-basis becomes trivial}.
	This ring is differentiably simple as shown in Example \ref{ex: example of diff simple}.
	Now $k^{1/p}$ is a free $k$-module, so $C\otimes_{k} k^{1/p}$ is a free $C$-module. Thus, $C$ is differentiably simple by Proposition \ref{subring of a differentiably simple ring}.

	Assume now that $C$ is differentiably simple. It is Noetherian since it is finite over the field $k$. Since $C^p\subset k$, the field $k$ is included in a coefficient field $L\subset C$ by Lemma \ref{Yuan's lemma}.
	Now, by Theorem \ref{Yuan-Harper theorem}, there is an isomorphism of $L$-algebras $C\cong L[X_1,\ldots,X_n]/\langle X_1^p,\ldots,X_n^p\rangle$. Observe that $C$ has a $p$-basis over $L$ (namely, the classes of $X_1,\ldots,X_n$) and $L$ has also a $p$-basis over $k$ since $k\subset L$ is a field extension. Thus, $C$ has a $p$-basis over $k$ by Lemma \ref{lem: elementary properties of p-basis}.(iii).
\end{proof}

\begin{corollary}\label{cor: p-basis and differentiably simple fiber}
	Let $A\subset C$ be a finite flat extension of exponent one between local rings. Then the following conditions are equivalent:
	\begin{enumerate}[(i)]
	\item $A\subset C$ has $p$-basis.
	\item $A/\m_A\subset C/\m_A C$ has $p$-basis.
	\item  $C/\m_A C$ is differentiably simple.
	\end{enumerate}
\end{corollary}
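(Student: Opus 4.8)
The plan is to split the three-way equivalence into (ii) $\Leftrightarrow$ (iii), which is immediate from the field case already treated, and (i) $\Leftrightarrow$ (ii), which carries the real content. Throughout I write $k:=A/\m_A$ for the residue field of $A$, so that the fiber is $C/\m_A C=C\otimes_A k$, a finite $k$-algebra. The equivalence (ii) $\Leftrightarrow$ (iii) I would handle first: since $C^p\subset A$, every element $x\otimes 1$ of the fiber satisfies $(x\otimes 1)^p=x^p\otimes 1\in k$, so $k\subset C/\m_A C$ is a finite extension of exponent one with $k$ a field. Proposition \ref{prop:p-basis over a field} then applies directly and yields the equivalence of (ii) and (iii) at once.

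The implication (i) $\Rightarrow$ (ii) is pure base change: if $\{x_1,\ldots,x_n\}$ is a $p$-basis of $C$ over $A$, then applying Lemma \ref{lem: elementary properties of p-basis}.(ii) with $A'=k$ shows that $\{x_1\otimes 1,\ldots,x_n\otimes 1\}$ is a $p$-basis of $C\otimes_A k=C/\m_A C$ over $k$.

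The substantive part is (ii) $\Rightarrow$ (i), and this is where I would spend the effort. Starting from a $p$-basis $\{\bar x_1,\ldots,\bar x_n\}$ of the fiber over $k$, I would lift each $\bar x_i$ to an element $x_i\in C$. By the definition of $p$-basis, the $p^n$ monomials $\{\bar x_1^{\alpha_1}\cdots\bar x_n^{\alpha_n}:0\le\alpha_i<p\}$ form a $k$-basis of $C/\m_A C$, so in particular $\dim_k(C/\m_A C)=p^n$. Since $A$ is local and $C$ is finite flat over $A$, the module $C$ is free over $A$, of rank $\dim_k(C\otimes_A k)=p^n$. The reductions modulo $\m_A C$ of the monomials $x_1^{\alpha_1}\cdots x_n^{\alpha_n}$ are exactly the chosen $k$-basis, so by Nakayama's lemma (valid because $A$ is local and $C$ is a finitely generated $A$-module) these $p^n$ monomials generate $C$ over $A$. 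Consequently the $A$-linear map $A^{\oplus p^n}\to C$ sending the standard basis to these monomials is a surjection between free $A$-modules of equal finite rank, hence an isomorphism. Therefore the monomials form a free $A$-basis of $C$; since $C^p\subset A$ is given, this is precisely the statement that $\{x_1,\ldots,x_n\}$ is a $p$-basis of $C$ over $A$, establishing (i).

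I do not anticipate a deep obstacle here; the only point that needs care is the rank bookkeeping — matching $\operatorname{rank}_A C$ with $\dim_k$ of the fiber — together with the standard fact that a surjective endomorphism of a finite free module over a commutative ring is an isomorphism (equivalently, that over a local ring a generating set of a free module whose cardinality equals the rank is automatically a basis). This is exactly what upgrades ``the monomials generate $C$'' to ``the monomials are a basis of $C$'', and hence what turns the fiberwise $p$-basis into a genuine $p$-basis over $A$.
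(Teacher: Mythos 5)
Your proposal is correct and follows essentially the same route as the paper: (i)$\Rightarrow$(ii) by base change via Lemma \ref{lem: elementary properties of p-basis}.(ii), (ii)$\Leftrightarrow$(iii) by Proposition \ref{prop:p-basis over a field}, and (ii)$\Rightarrow$(i) by lifting a $p$-basis of the fiber and applying Nakayama to the $p^n$ monomials in the free $A$-module $C$. Your extra remark about the surjection between free modules of equal rank merely makes explicit the rank bookkeeping the paper leaves implicit.
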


\begin{proof}
	The implication (i)$\Rightarrow$(ii) follows from Lemma \ref{lem: elementary properties of p-basis}.(ii). The equivalence (ii)$\Leftrightarrow$(iii) is Proposition \ref{prop:p-basis over a field}. It remains to show (ii)$\Rightarrow$(i).
	Since $C$ is flat over the local ring $A$, it is a free $A$-module of finite rank. Choose $x_1,\ldots,x_n\in C$, whose classes modulo $\m_A C$ form a $p$-basis for $A/\m_A\subset C/\m_A C$. Then, by definition, the classes in $C/\m_A C$ of the monomials in $\{x_1^{\alpha_1}\cdots x_n^{\alpha_n}: 0\leq\alpha_i<p\}$ form a basis for $C/\m_AC$ as vector space over $A/\m_A$. Thus, by Nakayama's Lemma, $\{x_1^{\alpha_1}\cdots x_n^{\alpha_n}: 0\leq\alpha_i<p\}$ is a basis of $C$ as free $A$-module, so $\{x_1,\ldots,x_n\}$ is a $p$-basis of $C$ over $A$.
\end{proof}

\subsection{Galois extensions of exponent one}\label{subsection: Galois extensions}

\begin{definition}[{\cite[Definition 6]{Yuan70}}]
	A ring extension $A\subset C$ is called {\em a purely inseparable Galois extension of exponent one}, or simply a Galois extension of exponent one, if
	\begin{enumerate}
		\item $A\subset C$ has exponent one, i.e. $C^p\subset A$,
		\item $C$ is finitely generated projective as $A$-module, and
		\item for each prime ideal $\p\subset A$, the ring $C_\p$ has $p$-basis over $A_{\p}$.
	\end{enumerate}
	By Corollary \ref{cor: p-basis and differentiably simple fiber}, Condition 3 can actually be replaced by the condition
	\begin{enumerate}
	\item[3'] for each prime ideal $\p\subset A$, the ring $C_\p/\p C_\p$ is differentiably simple.
	\end{enumerate}
\end{definition}

Observe that if $A\subset C$ is a Galois extension, then it is in particular faithfully flat. If $A'$ is any $A$-algebra, then the base change $A'\subset C\otimes_A A'$ is also a Galois extension since $p$-basis and the property of being a finitely generated projective module behave well under base extensions.

\begin{lemma}\label{lem: being Galois implies p-basis locally}
	Let $A\subset C$ be a finite ring extension of exponent one. Then $C$ is Galois over $A$ if and only if there exist $f_1,\ldots,f_s\in A$ generating the unit ideal $A$ such that each $C_{f_i}$ has a finite $p$-basis over $A_{f_i}$.
\end{lemma}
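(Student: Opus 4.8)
The plan is to prove the two implications separately. Note first that the hypotheses of the lemma already supply exponent one ($C^p\subset A$) and finiteness of $C$ over $A$; thus, in the definition of a Galois extension, only conditions (2) and (3) remain to be matched against the Zariski-local statement about a cover.

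For the implication $(\Leftarrow)$, suppose the $f_i$ generate the unit ideal and each $C_{f_i}$ has a finite $p$-basis over $A_{f_i}$, so that $C_{f_i}$ is a free $A_{f_i}$-module of finite rank $p^{n_i}$. Since $C$ becomes free of finite rank on the Zariski cover $\{D(f_i)\}$, it is finitely generated projective over $A$ by the standard characterization of finite projective modules; this gives condition (2). For condition (3), fix $\p\in\Spec(A)$ and choose some $f_i\notin\p$ (possible because the $f_i$ generate the unit ideal). Then $A_\p$ is a localization of $A_{f_i}$ and $C_\p=C_{f_i}\otimes_{A_{f_i}}A_\p$, so Lemma \ref{lem: elementary properties of p-basis}.(ii) turns the $p$-basis of $C_{f_i}$ over $A_{f_i}$ into a $p$-basis of $C_\p$ over $A_\p$. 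Hence $C$ is Galois over $A$.

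For the implication $(\Rightarrow)$, I would spread out a stalkwise $p$-basis. Fix $\p$ and a $p$-basis of $C_\p$ over $A_\p$; after rescaling each basis element by a suitable unit of $A_\p$, I may assume the basis consists of the images of elements $y_1,\dots,y_n\in C$. Let $\phi\colon A^{p^n}\to C$ be the $A$-linear map sending the standard basis vector indexed by an exponent vector $\alpha\in\{0,\dots,p-1\}^n$ to the monomial $y_1^{\alpha_1}\cdots y_n^{\alpha_n}$; by construction $\phi_\p$ is an isomorphism, and it suffices to produce $f\notin\p$ with $\phi_f$ an isomorphism, for then $\{y_1,\dots,y_n\}$ is a $p$-basis of $C_f$ over $A_f$. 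The cokernel of $\phi$ is finitely generated (being a quotient of $C$) and vanishes at $\p$, so $\phi_g$ is surjective for some $g\notin\p$. Over $A_g$ this surjection onto the projective module $C_g$ splits, so $\ker(\phi_g)$ is a direct summand of $A_g^{p^n}$, hence finitely generated, and it too vanishes at $\p$; therefore $\ker(\phi_g)_h=0$ for some $h\notin\p$. Taking $f=gh$ makes $\phi_f$ an isomorphism. Running this over all $\p$ produces an open cover of $\Spec(A)$ by basic opens $D(f)$ on which $C$ has a finite $p$-basis; quasi-compactness of $\Spec(A)$ extracts a finite subcover $D(f_1),\dots,D(f_s)$, and the equality $D(f_1)\cup\cdots\cup D(f_s)=\Spec(A)$ is precisely the assertion that $f_1,\dots,f_s$ generate the unit ideal.

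The main obstacle is the spreading-out step in $(\Rightarrow)$: passing from an isomorphism at the single prime $\p$ to an isomorphism on a neighborhood, without assuming $A$ Noetherian. The cokernel is harmless, since it is automatically finitely generated; controlling the kernel is the delicate point. The resolution is to first arrange surjectivity and then exploit the projectivity of $C$ (condition (2) of the Galois hypothesis) so that the kernel becomes a direct summand of a free module, hence finitely generated, at which moment its vanishing at $\p$ propagates to a basic open set.
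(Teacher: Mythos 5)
Your proof is correct. It does not, however, follow the paper, because the paper does not actually argue this lemma: the forward implication is dispatched by citing Lemma~7 of Yuan's paper \cite{Yuan70}, and the converse is declared trivial. Your converse matches what the paper calls trivial (local freeness on a basic open cover gives finite projectivity, and base change of a $p$-basis along $A_{f_i}\to A_\p$ gives condition~(3)). For the forward direction you supply a complete spreading-out argument in place of the citation, and it is sound: clearing denominators to get $y_1,\dots,y_n\in C$ is legitimate since rescaling a $p$-basis element by a unit of the base multiplies each monomial by a unit and so preserves the basis property; the cokernel of $\phi$ is finitely generated and hence its vanishing at $\p$ propagates to a basic open; and the key non-Noetherian point --- that the kernel is finitely generated --- is correctly extracted from condition~(2) of the Galois hypothesis, which splits the surjection onto the projective module $C_g$ and exhibits $\ker(\phi_g)$ as a direct summand of $A_g^{p^n}$. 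What the paper's route buys is brevity; what yours buys is a self-contained proof that makes explicit exactly where projectivity of $C$ is needed, which is worth having since the paper never assumes $A$ Noetherian here.
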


\begin{proof}
	The implication ($\Rightarrow$) is \cite[Lemma 7]{Yuan70}. The converse is trivial.
\end{proof}

\begin{proposition}\label{prop: Galois with omega}
	Let $A\subset C$ be a finite ring extension of exponent one. Then $C$ is Galois over $A$ if and only if $\Omega_{C/A}$ is a projective $C$-module.
\end{proposition}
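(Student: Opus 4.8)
The plan is to prove the equivalence by passing to localizations and invoking, in both directions, the differential characterization of $p$-basis from Proposition~\ref{prop: p-basis vs diff basis finite case} together with the local criterion for being Galois from Lemma~\ref{lem: being Galois implies p-basis locally}. The conceptual bridge is the elementary observation that, since $C^p\subset A$, for every $g\in C$ and every prime $\q\subset C$ one has $g\in\q$ iff $g^p\in\q$; consequently $D(g)=D(g^p)$ in $\Spec(C)$, the localizations satisfy $C_g=C_{g^p}$, and $\Spec(C)\to\Spec(A)$ is a homeomorphism (each prime of $A$ has a unique prime of $C$ above it, as $C$ is integral over $A$ and the fibers are purely inseparable). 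This is what will let me translate statements that are local on $\Spec(C)$, where projectivity of $\Omega_{C/A}$ naturally lives, into statements local on $\Spec(A)$ with localizations at elements of $A$, as Lemma~\ref{lem: being Galois implies p-basis locally} demands. Throughout I use that $\Omega_{C/A}$ is finitely generated, which holds because $C$ is module-finite, hence algebra-finite, over $A$, and that Kähler differentials localize.

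For the forward implication, I would start from a family $f_1,\dots,f_s\in A$ generating the unit ideal with each $C_{f_i}$ admitting a finite $p$-basis over $A_{f_i}$, furnished by Lemma~\ref{lem: being Galois implies p-basis locally}. Since $\Omega_{C_{f_i}/A_{f_i}}\cong(\Omega_{C/A})_{f_i}$, Proposition~\ref{prop: p-basis vs diff basis finite case} shows each such localization is free over $C_{f_i}$. As the $f_i$ generate the unit ideal of $A$, their images generate the unit ideal of $C$, so $\Omega_{C/A}$ is free on a finite principal cover of $\Spec(C)$ and is therefore a finitely generated projective $C$-module.

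For the converse, assume $\Omega:=\Omega_{C/A}$ is projective, hence finitely generated and locally free on $\Spec(C)$. Fixing $\q\in\Spec(C)$ with $\Omega_\q$ free of rank $n$, I would use that $\Omega$ is generated by the differentials $\{dc:c\in C\}$ to pick $c_1,\dots,c_n\in C$ whose differentials reduce to a basis of $\Omega\otimes\kappa(\q)$; by Nakayama these form a basis of $\Omega_\q$, hence of $\Omega_g$ for some $g\in C\setminus\q$. Replacing $g$ by $f:=g^p\in A$ and using $C_g=C_f$ and $\Omega_g=\Omega_f\cong\Omega_{C_f/A_f}$, Proposition~\ref{prop: p-basis vs diff basis finite case} yields that $c_1,\dots,c_n$ form a $p$-basis of $C_f$ over $A_f$. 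Letting $\q$ range and using quasi-compactness of $\Spec(C)$, finitely many of the $D(g)=D(f)$ cover $\Spec(C)$; for each prime $\p\subset A$ its unique prime $\q$ above lies in some $D(f)$, whence $f\notin\q\cap A=\p$, so the corresponding $f\in A$ lie in no prime of $A$ and therefore generate the unit ideal of $A$. Lemma~\ref{lem: being Galois implies p-basis locally} then shows $C$ is Galois over $A$.

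The one genuinely non-formal step is this descent of the localizing element from $\Spec(C)$ to $\Spec(A)$: arranging a $p$-basis over $A_f$ with $f\in A$, rather than merely over some $C_g$. This is exactly where the exponent-one hypothesis is indispensable, through $C_g=C_{g^p}$ and the induced homeomorphism $\Spec(C)\cong\Spec(A)$. A secondary technical care, dispatched by Nakayama, is to select the local basis of $\Omega$ to consist of differentials $dc_i$ so that Proposition~\ref{prop: p-basis vs diff basis finite case} applies.
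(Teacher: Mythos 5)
Your proof is correct and follows essentially the same route as the paper: both directions reduce to Lemma~\ref{lem: being Galois implies p-basis locally} and Proposition~\ref{prop: p-basis vs diff basis finite case}, with the forward implication being identical. The only (harmless) variation is in the converse, where you spread out a local basis of $\Omega_{C/A}$ to a principal open $D(g)$ of $\Spec(C)$ and then descend via $C_g=C_{g^p}$ with $g^p\in A$, whereas the paper localizes at primes of $A$ directly and uses the homeomorphism $\Spec(C)\to\Spec(A)$ to produce the element $f\in A$.
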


\begin{proof}
	Assume that $C$ is Galois over $A$.
	By Lemma \ref{lem: being Galois implies p-basis locally}, there are $f_1,\ldots,f_s\in A$ generating the unit ideal such that $C_{f_i}$ has (finite) $p$-basis over $A_{f_i}$.
	By Proposition \ref{prop: p-basis vs diff basis finite case}, $\Omega_{C_{f_i}/A_{f_i}}=(\Omega_{C/A})_{f_i}$ is a free $C_{f_i}$-module of finite rank. Hence, $\Omega_{C/A}$ is a projective $C$-module.
	
	Conversely, assume that $\Omega_{C/A}$ is a projective $C$-module. It is finitely generated over $C$ since $A\subset C$ is finite. We now fix $\p\in \Spec(A)$. Note that $\Omega_{C_\p/A_\p}$ is a finite free $C_\p$-module and it has a basis of the form $\{dx_1,\ldots,dx_n\}$ for some $x_1,\ldots,x_n\in C$. Here $d\colon C\to\Omega_{C/A}$ denotes the universal derivation and we are viewing $dx_i\in \Omega_{C/A}$ as element of $\Omega_{C_\p/A_\p}=\Omega_{C/A}\otimes_C C_\p$.
	Now $\Omega_{C/A}$ is a finitely generated projective $C$-module and $\Spec(C)\to\Spec(A)$ is a homeomorphism, so there is $f\in A$ such that $dx_1,\ldots,dx_n$ form a basis for $\Omega_{C_f/A_f}=(\Omega_{C/A})_f$ as free $C_f$-module.
	It follows from Proposition \ref{prop: p-basis vs diff basis finite case} that $\{x_1,\ldots,x_n\}$ is a $p$-basis of $C_f$ over $A_f$.
	Since $\p$ was arbitrary, we conclude that there are $f_1,\ldots,f_s\in A$ generating the unit ideal such that $C_{f_i}$ has a $p$-basis over $A_{f_i}$. Thus, $A\subset C$ is Galois by Lemma \ref{lem: being Galois implies p-basis locally}.
\end{proof}

\begin{corollary}
	Let $A\subset C$ be a finite ring extension of exponent one.
	Let $A'$ be an $A$-algebra and set $C':=C\otimes_A A'$.
	If $C'$ is Galois over $A'$ and $A\to A'$ is faithfully flat, then $C$ is Galois over $A$.
\end{corollary}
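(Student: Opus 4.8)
The plan is to deduce the result from the characterization of Galois extensions in terms of Kähler differentials established in Proposition \ref{prop: Galois with omega}, combined with faithfully flat descent. The point is that the property ``$\Omega_{C/A}$ is a projective $C$-module'' can be tested after a faithfully flat base change, and the relevant base change $C\to C'$ is induced by $A\to A'$.

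First I would record that, since $A\to A'$ is faithfully flat and $C'=C\otimes_A A'$, the induced homomorphism $C\to C'$ is again faithfully flat, being a base change of a faithfully flat map. Next, as $C'$ is Galois over $A'$ and $A'\subset C'$ is a finite extension of exponent one, Proposition \ref{prop: Galois with omega} gives that $\Omega_{C'/A'}$ is a projective $C'$-module; in particular it is finitely presented and flat over $C'$. I would then invoke the standard base-change isomorphism for Kähler differentials, $\Omega_{C'/A'}\cong \Omega_{C/A}\otimes_C C'$, valid precisely because $C'=C\otimes_A A'$. Finally, since $A\subset C$ is finite, $C$ is a finite type $A$-algebra, so $\Omega_{C/A}$ is a finitely generated $C$-module.

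Putting these facts together, $\Omega_{C/A}$ is a finitely generated $C$-module whose base change $\Omega_{C/A}\otimes_C C'$ along the faithfully flat map $C\to C'$ is finitely presented and flat. The heart of the argument, and the step I expect to be the main obstacle, is the faithfully flat descent: I would use that both finite presentation and flatness descend along a faithfully flat ring homomorphism, so that $\Omega_{C/A}$ is finitely presented and flat over $C$, hence projective (a finitely presented flat module is projective). Applying Proposition \ref{prop: Galois with omega} once more, this time in the direction ``$\Omega_{C/A}$ projective $\Rightarrow$ $C$ Galois over $A$'', yields the conclusion.
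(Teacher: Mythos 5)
Your proposal is correct and follows essentially the same route as the paper: both reduce the statement to Proposition \ref{prop: Galois with omega}, use the base-change identity $\Omega_{C'/A'}\cong\Omega_{C/A}\otimes_C C'$ together with the faithful flatness of $C\to C'$, and conclude by descent that $\Omega_{C/A}$ is finitely generated projective. Your version merely spells out the descent step (descending finite presentation and flatness separately, then invoking ``finitely presented flat $\Rightarrow$ projective''), which the paper leaves implicit.
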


\begin{proof}
	Note that $\Omega_{C/A}\otimes_CC'=\Omega_{C'/A'}$ and $\Omega_{C'/A'}$ is finitely generated projective as $C'$-module by Proposition \ref{prop: Galois with omega}. Note also that $C\to C'$ is faithfully flat.
	By faithfully flat descent, $\Omega_{C/A}$ is finitely generated projective as $C$-module, so $A\subset C$ is Galois by Proposition \ref{prop: Galois with omega}.
\end{proof}

\begin{proposition}[{\cite[Th\'eor\`eme 71]{Andre91}}]\label{prop: Galois intermediate extension}
	Let $A\subset C$ be a Galois extension of exponent one and let $B$ be an $A$-subalgebra of $C$. Then $B\subset C$ is Galois if and only if $C$ is projective as $B$-module.
	In that case, $A\subset B$ is also Galois. 
\end{proposition}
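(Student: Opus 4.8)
The plan is to localize first and then prove the two implications separately. Being Galois can be checked after localizing $A$ at each prime (Proposition~\ref{prop: Galois with omega}), and the hypotheses $A\subset B\subset C$, $A\subset C$ Galois, and $C$ projective over $B$ are all stable under such localization; moreover, since $C^p\subset A\subset B\subset C$, the maps $\Spec(C)\to\Spec(B)\to\Spec(A)$ are homeomorphisms. I would therefore assume $A$, $B$, $C$ local, with $C$ free over $A$ and $A\subset C$ carrying a $p$-basis, so that the fiber $\bar C:=C/\m_A C$ is differentiably simple and, by Theorem~\ref{Yuan-Harper theorem}, of the form $k_C[X_1,\dots,X_m]/\langle X_1^p,\dots,X_m^p\rangle$ with $k_C:=C/\m_C$. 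Whenever $C$ is flat over $B$ (which holds in both directions), $B\to C$ is faithfully flat, and a standard descent using that $A\to C$ is flat shows that $A\to B$ is flat as well. Finally, the implication ``$B\subset C$ Galois $\Rightarrow$ $C$ projective over $B$'' is immediate from the definition of a Galois extension.

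For the converse, assume $C$ is projective, hence free, over the local ring $B$. Exponent one and module-finiteness of $B\subset C$ are clear, so by Corollary~\ref{cor: p-basis and differentiably simple fiber} it is enough to prove that the fiber $D:=C/\m_B C$ is differentiably simple. Since $\m_A C\subset\m_B C$, this fiber is the quotient $\bar C/\m_B\bar C$, and by Proposition~\ref{prop: quotient of differentiably simple rings} it suffices to check that $\m_B\bar C$ is generated by part of a minimal generating set of $\m_{\bar C}$. This is exactly where flatness is used. The flat local maps $A\to C$ and $B\to C$ give injections of cotangent spaces $\m_A/\m_A^2\otimes k_C\hookrightarrow\m_C/\m_C^2$ and $\m_B/\m_B^2\otimes k_C\hookrightarrow\m_C/\m_C^2$, compatible with $A\subset B$ and with cokernels $\m_{\bar C}/\m_{\bar C}^2$ and $\m_D/\m_D^2$ respectively. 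Choosing a minimal generating set of $\m_B$ consisting of a minimal generating set $a_1,\dots,a_u$ of $\m_A$ (which remains part of a minimal generating set of $\m_B$, by flatness of $A\to B$) together with elements $b_{u+1},\dots,b_t$, the $a_i$ map to $0$ in $\bar C$, while the $b_j$ map to elements whose classes in $\m_{\bar C}/\m_{\bar C}^2$ are linearly independent by the injectivity above. Hence $\m_B\bar C=\langle\bar b_{u+1},\dots,\bar b_t\rangle$ is generated by part of a minimal generating set of $\m_{\bar C}$, and Proposition~\ref{prop: quotient of differentiably simple rings} yields that $D$ is differentiably simple. Reversing the localization, $\Omega_{C/B}$ is locally free and $B\subset C$ is Galois by Proposition~\ref{prop: Galois with omega}.

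For the final assertion, suppose $B\subset C$ is Galois. Over the local ring $B$ the extension then has a $p$-basis, so $1$ is a member of a free basis of $C$ over $B$; thus $B$ is a $B$-module, and hence $A$-module, direct summand of $C$, and since $C$ is finite over $A$ so is $B$. As $A\to B$ is flat and finite, $B$ is finite projective over $A$, and exponent one is clear; it remains to show that the fiber $\bar B:=B/\m_A B$ is differentiably simple. Here I would use the argument of Proposition~\ref{subring of a differentiably simple ring}: because the summand $C/B$ is free over $B$, hence flat over $A$, the natural map $\bar B\to\bar C$ is injective, and $\bar C=C\otimes_B\bar B$ is free over $\bar B$; as $\bar C$ is differentiably simple, Proposition~\ref{subring of a differentiably simple ring} forces $\bar B$ to be differentiably simple. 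By Corollary~\ref{cor: p-basis and differentiably simple fiber}, $A\subset B$ is Galois.

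I expect the main obstacle to be the converse, and within it the verification that $\m_B\bar C$ is generated by part of a minimal generating set of $\m_{\bar C}$: this is the step that genuinely couples the flatness of $C$ over $B$ with the structure theory of differentiably simple rings through Proposition~\ref{prop: quotient of differentiably simple rings}, and it requires some care with the residue field extension of $B/\m_B$ inside $k_C$ when comparing cotangent spaces and checking that the chosen generators stay linearly independent after passage to the fiber. By comparison the final assertion is softer, resting on the free-subring principle of Proposition~\ref{subring of a differentiably simple ring} together with routine faithfully flat descent.
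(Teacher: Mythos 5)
Your overall skeleton (localize, reduce to the fibers, invoke Corollary~\ref{cor: p-basis and differentiably simple fiber} and Proposition~\ref{prop: quotient of differentiably simple rings}, and handle $A\subset B$ via Proposition~\ref{subring of a differentiably simple ring}) matches the paper's strategy, and your treatment of the easy implication and of the final assertion is sound. But the converse direction has a genuine gap: you assert that the flat local maps $A\to C$ and $B\to C$ ``give injections of cotangent spaces'' $\m_A/\m_A^2\otimes k_C\hookrightarrow \m_C/\m_C^2$ and $\m_B/\m_B^2\otimes k_C\hookrightarrow \m_C/\m_C^2$, and that a minimal generating set of $\m_A$ stays part of one for $\m_B$ ``by flatness.'' Neither statement follows from flatness, and the first is false even in this exact setting: for the Galois extension $A=\mathbb{F}_p[[t]]\subset C=\mathbb{F}_p[[s]]$ with $t=s^p$ (free with $p$-basis $\{s\}$), the element $t$ spans $\m_A/\m_A^2$ but lies in $\m_C^p\subset\m_C^2$, so the map on cotangent spaces is zero. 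The linear independence of the classes $\bar b_{u+1},\dots,\bar b_t$ in $\m_{\bar C}/\m_{\bar C}^2$ --- the one fact your argument really needs --- is exactly the nontrivial content of Theorem~\ref{th:flat homomorphism of diff simple rings} (equivalently Corollary~\ref{cor: extending a minimal generating set of a subring}) applied to the flat local map of Artinian fibers $\bar B\to\bar C$; the paper proves that theorem via Corollary~\ref{cor: annihilator} and Kunz's complete-intersection result, and explicitly remarks that no elementary proof is known. So this step cannot be dismissed as routine cotangent-space bookkeeping.

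The fix is simply to cite the theorem instead of the false general principle: for $\p\in\Spec(A)$ and the unique $\q\in\Spec(B)$ over it, the map $B_\p/\p B_\p\to C_\p/\p C_\p$ is a flat local homomorphism of Noetherian local rings with $(C_\p/\p C_\p)^p=0$, and $C_\p/\p C_\p$ is differentiably simple because $A\subset C$ is Galois; Theorem~\ref{th:flat homomorphism of diff simple rings} then yields at once that both $B_\p/\p B_\p$ and $C_\p/\q C_\p$ are differentiably simple, which is condition~3' for $A\subset B$ and for $B\subset C$ simultaneously. This is precisely the paper's argument, and it renders your separate cotangent-space computation (and the separate treatment of the last assertion) unnecessary.
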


\begin{proof} 
	Note that both extensions $A\subset B$ and $B\subset C$ have exponent one. Since $C$ is finitely generated as $A$-module, it is so as $B$-module.

	If $B\subset C$ is Galois, then $C$ is projective as $B$-module by definition.	Conversely, if $C$ is projective as $B$-module, then Lemma \ref{lem: finite projective extensions} below shows that $B$ is a direct summand of $C$ as $B$-module. In particular, $B$ is a direct summand of $C$ as $A$-module, so it is finitely generated projective as $A$-module. So far we have checked that $A\subset B$ and $B\subset C$ satisfy conditions 1 and 2 of the definition of Galois extension. 
	Therefore to show that they are Galois, it only remains to check condition 3'. 
	In other words, we have to prove that for any $\p\in\Spec(A)$ and any $\q\in\Spec(B)$ with $\p=A\cap\q$, the rings $B_\p/\p B_\p$ and $C_\q/\q C_\q$ are differentiably simple. Note that the latter is equal to $C_\p/\q C_\p$, since there are natural identifications $\Spec(A)\simeq\Spec(B)\simeq\Spec(C)$ and $C_{\p}=C_{\q}$. 	
	
	Since $A\subset C$ is Galois, the ring $C_{\mathfrak{p}}/\mathfrak{p} C_{\mathfrak{p}}$ is differentiably simple. Since $B\to C$ is in particular flat, the extension  $B_{\mathfrak{p}}/\mathfrak{p} B_{\mathfrak{p}}\to C_{\mathfrak{p}}/\mathfrak{p} C_{\mathfrak{p}}$ is a flat local homomorphism of Noetherian local rings. 
	Observe that $\q B_\p/\p B_\p$ is the maximal ideal of $B_\p/\p B_\p$.
	Thus, by Theorem \ref{th:flat homomorphism of diff simple rings}, $B_{\mathfrak{p}}/\mathfrak{p} B_{\mathfrak{p}}$ and $C_\p/\q C_\p$ are differentiably simple, as we wanted to show.
\end{proof}

\begin{definition}
	A Galois extension of exponent one $A\subset C$ is said to have \textit{differential rank} $n$ if $\Omega_{C/A}$ has constant rank $n$ as $C$-module. Equivalently, if, for any $\p\in\Spec(A)$, the length of any  $p$-basis for $C_\p$ over $A_\p$ is $n$ (Proposition \ref{prop: p-basis vs diff basis finite case}).
\end{definition}

The following lemma is a direct consequence of Lemma \ref{lem: elementary properties of p-basis}.
\begin{lemma}\label{lem: on differential rank}
	Assume that $A\subset C$ is a Galois extension of exponent one and differential rank $n$. 
	\begin{enumerate}[(i)]
		\item For any $A$-algebra $A'$, the Galois extension $A'\subset C\otimes_A A'$ has also differential rank $n$.
		\item If $B\subset C$ is an $A$-subalgebra such that $C$ is Galois over $B$ of differential rank $r$, then the Galois extension $A\subset B$ (see Proposition \ref{prop: Galois intermediate extension}) has differential rank $n-r$.
	\end{enumerate}
 \end{lemma}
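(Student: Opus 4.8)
The plan is to reduce both statements to local computations with $p$-bases, exploiting the two equivalent descriptions of differential rank recorded in the definition preceding the lemma: as the constant rank of $\Omega_{C/A}$, or as the common length of a $p$-basis of $C_\p$ over $A_\p$ for every $\p\in\Spec(A)$. For part (ii) I first observe that by Proposition \ref{prop: Galois intermediate extension} both $A\subset B$ and $B\subset C$ are Galois, so all three differential ranks are well defined and it only remains to check that they add up correctly.

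For part (i), I would argue that differential rank is preserved under base change. The cleanest route is the base-change formula for K\"ahler differentials, $\Omega_{(C\otimes_A A')/A'}\cong \Omega_{C/A}\otimes_C(C\otimes_A A')$: tensoring a projective $C$-module of constant rank $n$ with $C\otimes_A A'$ over $C$ yields a projective $C\otimes_A A'$-module of constant rank $n$, so the differential rank is again $n$. Alternatively, staying within the $p$-basis language as the hint suggests, I would fix $\p'\in\Spec(A')$ lying over $\p\in\Spec(A)$, choose (by Lemma \ref{lem: being Galois implies p-basis locally}) an $f\in A\setminus\p$ for which $C_f$ admits a $p$-basis $\{x_1,\ldots,x_n\}$ over $A_f$, and apply Lemma \ref{lem: elementary properties of p-basis}.(ii) over the base change to $A'$ to conclude that $\{x_1\otimes 1,\ldots,x_n\otimes 1\}$ is a $p$-basis of $(C\otimes_A A')_f$ over $A'_f$, again of length $n$. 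Since the image of $f$ avoids $\p'$, localizing further at $\p'$ gives the claim.

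For part (ii), the core is Lemma \ref{lem: elementary properties of p-basis}.(iii). Fix $\p\in\Spec(A)$ and let $\q\in\Spec(B)$ be the prime lying over it; recall from Proposition \ref{prop: Galois intermediate extension} that the natural maps $\Spec(C)\to\Spec(B)\to\Spec(A)$ are homeomorphisms and that $C_\p=C_\q$, so after localizing I may pick a $p$-basis $\{x_1,\ldots,x_r\}$ of $C_\p$ over $B_\p$ (length $r$, the differential rank of $B\subset C$) and a $p$-basis $\{y_1,\ldots,y_s\}$ of $B_\p$ over $A_\p$, where $s$ is the differential rank of $A\subset B$ that I wish to compute. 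By Lemma \ref{lem: elementary properties of p-basis}.(iii) the concatenation $\{x_1,\ldots,x_r,y_1,\ldots,y_s\}$ is then a $p$-basis of $C_\p$ over $A_\p$, of length $r+s$. Since $A\subset C$ has differential rank $n$, every $p$-basis of $C_\p$ over $A_\p$ has length $n$, forcing $r+s=n$, i.e. $s=n-r$.

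I expect no serious obstacle: once the right $p$-bases are chosen, the argument is essentially a bookkeeping of lengths. The only point requiring care is the localization matching in the tower $A\subset B\subset C$ for part (ii) --- one must ensure the three $p$-bases are taken at compatible localizations and that the primes correspond --- but this is exactly what the spectral homeomorphisms and the identity $C_\p=C_\q$ from Proposition \ref{prop: Galois intermediate extension} provide. Equivalently, one could phrase part (ii) through the cotangent exact sequence $\Omega_{B/A}\otimes_B C\to\Omega_{C/A}\to\Omega_{C/B}\to 0$; the $p$-basis computation above (together with Proposition \ref{prop: p-basis vs diff basis finite case}) shows that this sequence is in fact split short exact locally, which again yields $n=(n-r)+r$.
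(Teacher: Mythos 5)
Your proof is correct and follows the same route as the paper, which simply asserts that the lemma is a direct consequence of Lemma \ref{lem: elementary properties of p-basis}: part (i) from its item (ii) (base change of a $p$-basis) and part (ii) from its item (iii) (concatenation of $p$-bases in the tower $A\subset B\subset C$). Your additional bookkeeping of localizations and the identification $C_\p=C_\q$ is exactly the detail the paper leaves implicit.
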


\begin{proposition}\label{prop: trivilization}
	Let $A\subset C$ be a Galois extension of differential rank $n$. Let $B\subset C$ be an $A$-subalgebra such that $B\subset C$ is a Galois extension of differential rank $r$. Then there exists a finitely presented and faithfully flat $A$-algebra $A'$ such that $A'\subset C\otimes_A A'$ has a splitting $p$-basis $z_1,\ldots,z_n$ such that  $B\otimes_A A'=A'[z_{r+1},\ldots,z_n]$.
\end{proposition}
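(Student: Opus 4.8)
The plan is to produce $A'$ in two stages: first pass to a faithfully flat, finitely presented $A$-algebra over which $C$ acquires a \emph{global} $p$-basis that is compatible with $B$ (in the sense that its last $n-r$ elements form a $p$-basis of $B$), and then adjoin $p$-th roots of the $p$-th powers of these basis elements so as to convert this $p$-basis into a splitting one, using Proposition \ref{prop: extension with p-basis becomes trivial}.

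For the first stage, recall that by Proposition \ref{prop: Galois intermediate extension} the extension $A\subset B$ is Galois, and by Lemma \ref{lem: on differential rank}(ii) it has differential rank $n-r$. Applying Lemma \ref{lem: being Galois implies p-basis locally} to both $B\subset C$ and $A\subset B$ and passing to a common refinement (using that $\Spec(C)\simeq\Spec(B)\simeq\Spec(A)$ and quasi-compactness), I would find $f_1,\ldots,f_s\in A$ generating the unit ideal such that, for each $i$, the ring $C_{f_i}$ has a $p$-basis $x^{(i)}_1,\ldots,x^{(i)}_r$ over $B_{f_i}$ while $B_{f_i}$ has a $p$-basis $x^{(i)}_{r+1},\ldots,x^{(i)}_n$ over $A_{f_i}$; by Lemma \ref{lem: elementary properties of p-basis}(iii) the concatenation $x^{(i)}_1,\ldots,x^{(i)}_n$ is then a $p$-basis of $C_{f_i}$ over $A_{f_i}$. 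Now set $A_0:=\prod_{i=1}^s A_{f_i}$. Since the $f_i$ generate the unit ideal, $A\to A_0$ is faithfully flat, and it is evidently finitely presented. Because tensor product commutes with finite products, $C\otimes_A A_0=\prod_i C_{f_i}$ and $B\otimes_A A_0=\prod_i B_{f_i}$, and the tuples $\xi_j:=(x^{(i)}_j)_i$ assemble the local $p$-bases into global ones: $\xi_1,\ldots,\xi_n$ is a $p$-basis of $C\otimes_A A_0$ over $A_0$, and $\xi_{r+1},\ldots,\xi_n$ is a $p$-basis of $B\otimes_A A_0$ over $A_0$, so in particular $B\otimes_A A_0=A_0[\xi_{r+1},\ldots,\xi_n]$.

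For the second stage, put $a_j:=\xi_j^p\in A_0$ (these lie in $A_0$ because the extension has exponent one) and define $A':=A_0[Y_1,\ldots,Y_n]/\langle Y_1^p-a_1,\ldots,Y_n^p-a_n\rangle$, which is finite free of rank $p^n$ over $A_0$, hence faithfully flat and finitely presented over $A_0$, and therefore over $A$. Writing $y_j$ for the class of $Y_j$, we have $y_j^p=a_j=\xi_j^p$, so Proposition \ref{prop: extension with p-basis becomes trivial} applies to the base change along $A_0\to A'$: the elements $z_j:=\xi_j\otimes 1-1\otimes y_j$ form a splitting $p$-basis of $A'\subset C\otimes_A A'=(C\otimes_A A_0)\otimes_{A_0}A'$. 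Applying the same proposition to the subextension $A_0\subset B\otimes_A A_0=A_0[\xi_{r+1},\ldots,\xi_n]$ with the roots $y_{r+1},\ldots,y_n$ shows that $z_{r+1},\ldots,z_n$ form a splitting $p$-basis of $A'\subset B\otimes_A A'$, whence $B\otimes_A A'=A'[z_{r+1},\ldots,z_n]$. As $z_{r+1},\ldots,z_n$ are precisely the last $n-r$ members of the splitting $p$-basis $z_1,\ldots,z_n$ of $C\otimes_A A'$, this gives exactly the claimed conclusion. The delicate point, and the step I expect to demand the most care, is the first stage: securing a single faithfully flat base change over which the a priori only local $p$-basis of $C$ over $A$ becomes global \emph{and} simultaneously adapted to the intermediate ring $B$; once that compatibility is in place, the passage to a splitting basis is a formal application of Proposition \ref{prop: extension with p-basis becomes trivial}.
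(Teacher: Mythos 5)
Your proof is correct and follows essentially the same route as the paper: both arguments localize via Lemma \ref{lem: being Galois implies p-basis locally} to obtain $p$-bases of $C_{f_i}$ over $B_{f_i}$ and of $B_{f_i}$ over $A_{f_i}$, concatenate them with Lemma \ref{lem: elementary properties of p-basis}(iii), and then apply Proposition \ref{prop: extension with p-basis becomes trivial} over a finite product of faithfully flat, finitely presented $A$-algebras. The only (harmless) difference is the choice of $A'$: the paper takes $A'=\prod_i C_{f_i}$, so that the required $p$-th roots are already present in the base (namely the elements $x_j^{(i)}$ themselves), whereas you first pass to $\prod_i A_{f_i}$ and then formally adjoin the roots via a finite free extension.
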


\begin{proof} 
	By assumption, $B\subset C$ is a Galois extension of differential rank $r$, and $A\subset B$ is a Galois extension of differential rank $n-r$ by Proposition \ref{prop: Galois intermediate extension} and Lemma \ref{lem: on differential rank}. Since the induced morphisms $\Spec(C)\to\Spec(B)\to\Spec(A)$ are homeomorphisms, we may apply Lemma \ref{lem: being Galois implies p-basis locally} to both $A\subset B$ and $B\subset C$ and deduce that there are $f_1,\ldots,f_s\in A$ generating the unit ideal such that, for each $i=1,\ldots,s$, $B_{f_i}$ has a $p$-basis $\{x_{r+1}^{(i)},\ldots,x_n^{(i)}\}$ over $A_{f_i}$ and $C_{f_i}$ has a $p$-basis $\{x_1^{(i)},\ldots,x_r^{(i)}\}$ over $B_{f_i}$. Then it follows from Lemma \ref{lem: elementary properties of p-basis}.(iii) that $\{x_1^{(i)},\ldots,x_n^{(i)}\}$ is a $p$-basis of $C_{f_i}$ over $A_{f_i}$.
	
	We now apply Proposition \ref{prop: extension with p-basis becomes trivial} to the extension $A_{f_i}\subset C_{f_i}$, the $p$-basis $\{x_1^{(i)},\ldots,x_n^{(i)}\}$ and $A'=C_{f_i}$. Thus, the set $\{z_1^{(i)},\ldots,z_n^{(i)}\}$, where
	$$z_j^{(i)}:=x_j^{(i)}\otimes 1-1\otimes x_j^{(i)}\in C_{f_i}\otimes_{A_{f_i}} C_{f_i}=C\otimes_A C_{f_i},$$
	is a splitting $p$-basis of $C\otimes_A C_{f_i}$ over $C_{f_i}$. Clearly $B\otimes_A C_{f_i}=C_{f_i}[z_{r+1}^{(i)},\ldots,z_n^{(i)}]$.

	We now set $A':=C_{f_1}\times \cdots \times C_{f_s}$ and define $z_j:=(z_j^{(1)},\ldots,z_j^{(s)})\in C\otimes_A A'$ for $j=1,\ldots,n$. Note that $\{z_1,\ldots,z_n\}$ is a splitting $p$-basis of $C\otimes_A A'$ over $A'$ and that $B\otimes_A A'=A'[z_{r+1},\ldots,z_n]$.
	Now the localization $A\to A_{f_i}$ and the extension $A_{f_i}\subset C_{f_i}$ (induced from $A\subset C$) are flat of finite presentation, and hence, so is the composition $A\to C_{f_i}$. Then it follows that the induced homomorphism $A\to A'$ is flat of finite presentation. And, since the induced morphism $\Spec(A')\to\Spec(A)$ is clearly surjective, we deduce that $A'$ is a faithfully flat $A$-algebra. This completes the proof.
\end{proof}

\subsection{Automorphism group schemes of split Galois extensions}\label{section: automorphisms}

Let $A\subset C$ be any finite ring extension such that $C$ is projective as $A$-module. The automorphism group scheme $\Aut_{C/A}$ is the group valued functor
\begin{align*}
	\Aut_{C/A}\colon \on{Alg}_A&\to\on{Groups}\\
	R&\mapsto \on{Aut}_{R}(C\otimes_A R)\, .
\end{align*}
Here $\on{Alg}_A$ and $\on{Groups}$ denote, respectively, the category of $A$-algebras and the category of groups, and $\on{Aut}_{R}(C\otimes_A R)$ is the group of $R$-automorphisms of the $R$-algebra $C\otimes_A R$. 
This group-valued functor is in fact an affine group scheme of finite type over $A$ (cf. \cite[Ch. II, \S 1, 2.6]{DemGab70}).
In other words, there is a finitely generated $A$-algebra $H_{C/A}$ (the Hopf algebra of $\Aut_{C/A}$) and a natural isomorphism
\begin{align*}
\Aut_{C/A}(R)\cong \on{Hom}_A(H_{C/A},R),
\end{align*}
for any $A$-algebra $R$. 

A finite ring extension $A\subset C$ is called {\em purely inseparable} if there exists a faithfully flat $A$-algebra $A'$ such that $C\otimes_A A'=A'[X_1,\ldots,X_n]/\langle X_1^{p^{e_1}},\ldots,X_n^{p^{e_n}}\rangle$ for some positive integers $e_1,\ldots,e_n$.
Some examples are the usual finite purely inseparable field extensions (see \cite{Ras71}) and the Galois extensions of exponent one, by Proposition \ref{prop: trivilization}.
Their automorphism group schemes have been considered in the literature with different perspectives. For instance, B\'egueri \cite{Beg69}, Shatz \cite{Sha69} and Chase \cite{Cha72} discussed the case of finite purely inseparable field extensions in connection with Galois theory.
More recently, F. S. de Salas and P. S. de Salas \cite{FPSan00} considered $\Aut_{C/A}$ for arbitrary finite purely inseparable ring extensions. In particular, they obtained a criterion for when $\Aut_{C/A}$ is integral. 

For our applications in the construction of the Yuan scheme in Section \ref{section: Yuan scheme} we only need to consider some specific subgroups of the automorphism group scheme of a split Galois extension of exponent one $A\subset C$, that is, with $C$ of the form
$$C=A[X_1,\ldots,X_n]/\langle X_1^p,\ldots,X_n^p\rangle.$$
We let $x_i$ denote the class of $X_i$ and fix the splitting $p$-basis $\{x_1,\ldots,x_n\}$ for our next discussion. Let
$\m=\langle x_1,\ldots,x_n\rangle\subset C$,
and, for $0\leq r<n$, let
$$ B=A[x_{r+1},\ldots,x_n]\subset C.$$
We now define two group-valued subfunctors $\Aut_{C/A,\{B,\m\}}\subset\Aut_{C/A,\{B\}}\subset \Aut_{C/A}$, that attach to each $A$-algebra $R$ the sets
\begin{align*}
	\Aut_{C/A,\{B\}}(R)&:=\{\sigma\in\on{Aut}_R(C\otimes_A R)\colon \sigma(B\otimes_A R)=B\otimes_A R \},\\
	\Aut_{C/A,\{B,\m\}}(R)&:=\{\sigma\in\on{Aut}_R(C\otimes_A R)\colon \sigma(B\otimes_A R)=B\otimes_A R ,\ \sigma(\m\otimes_A R)=\m\otimes_A R\}.
\end{align*}

For the next proposition we denote by $\GL_m$ the general linear group viewed as scheme over $A$. That is, for an $A$-algebra $R$, $\GL_m(R)$ is the set of invertible $(m\times m)$-matrix with entries in $R$.
Conventionally, $\GL_0$ is the trivial $A$-scheme $\Spec(A)$.

\begin{proposition}
	$\Aut_{C/A,\{B\}}$ and $\Aut_{C/A,\{B,\m\}}$ are representable by closed subgroups of $\Aut_{C/A}$. Moreover,
	\begin{enumerate}
		\item $\Aut_{C/A,\{B,\m\}}$ is isomorphic as scheme over $A$ to $\GL_{r}\times \GL_{n-r}\times\mathbb{A}_A^{t}$, where $$t+r^2+(n-r)^2=(p^n-1)r+(p^{n-r}-1)(n-r).$$
		\item  $\Aut_{C/A,\{B\}}\cong \Aut_{C/A,\{B,\m\}}\times\Spec(C)$ as schemes over $A$.
	\end{enumerate}
\end{proposition}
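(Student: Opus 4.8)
The plan is to describe the $R$-points of both functors explicitly through the images $\sigma(x_i)$ and to read the scheme structure off these coordinates. Write $\m_R:=\m\otimes_A R$ and $\mathfrak{n}_R:=\langle x_{r+1},\ldots,x_n\rangle\otimes_A R$ for the relevant ideals of $C\otimes_A R=R[x_1,\ldots,x_n]/\langle x_i^p\rangle$. I use repeatedly that $y^p=0$ for $y\in\m_R$, so that any assignment $x_i\mapsto\sigma(x_i)\in\m_R$ extends to a well-defined $R$-algebra endomorphism $\sigma$ of $C\otimes_A R$. The key technical fact, which I would isolate first, is: \emph{such a $\sigma$ is an automorphism if and only if the matrix $M\in\on{Mat}_n(R)$ of its linear part (the map induced on $\m_R/\m_R^2$) lies in $\GL_n(R)$.} Indeed, $\m_R$ is nilpotent, so the $\m_R$-adic filtration is finite and is preserved by $\sigma$; hence $\sigma$ is bijective if and only if the induced endomorphism of the associated graded ring $\bigoplus_k\m_R^k/\m_R^{k+1}$ is bijective, and the latter is precisely the graded algebra automorphism determined by $M$, which is invertible exactly when $M\in\GL_n(R)$.

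For representability as closed subgroups, I would record the universal automorphism on the Hopf algebra $H_{C/A}$ by the coefficients $c_{\alpha,i}$ in $\sigma(x_i)=\sum_\alpha c_{\alpha,i}x^\alpha$. The condition $\sigma(\m_R)\subseteq\m_R$ is $c_{0,i}=0$ for all $i$, and $\sigma(B\otimes_A R)\subseteq B\otimes_A R$ is $c_{\alpha,j}=0$ for $j>r$ and every $\alpha$ with $\alpha_k>0$ for some $k\le r$; each is the vanishing of finitely many coordinate functions, hence cuts out a closed subscheme, and each is visibly stable under composition and inverse. The only point requiring care is that these inclusions are automatically equalities. If $\sigma$ is an automorphism with $\sigma(x_i)\in\m_R$, then it induces the identity on $C\otimes_A R/\m_R=R$, so the five lemma forces $\sigma|_{\m_R}$ to be bijective; if moreover $\sigma(x_j)\in\mathfrak{n}_R$ for $j>r$, the linear part $M$ is block lower triangular, so its diagonal blocks $M_{11}\in\GL_r(R)$ and $M_{22}\in\GL_{n-r}(R)$ are invertible, and the graded argument applied to $B\otimes_A R$ shows $\sigma(B\otimes_A R)=B\otimes_A R$.

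This analysis already gives (1). An $R$-point of $\Aut_{C/A,\{B,\m\}}$ is exactly a choice of $\sigma(x_i)\in\m_R$ for $i\le r$ and $\sigma(x_j)\in\mathfrak{n}_R$ for $j>r$ whose diagonal linear blocks $M_{11},M_{22}$ are invertible, and, conversely, every such datum defines an automorphism by the key fact. Separating the coordinates, the two diagonal blocks give a factor $\GL_r\times\GL_{n-r}$, while the off-diagonal linear block $M_{21}$ together with all coefficients of degree $\ge 2$ are unconstrained and give an affine space $\mathbb{A}_A^t$. Since $\m_R$ and $\mathfrak{n}_R$ are free of ranks $p^n-1$ and $p^{n-r}-1$, the total number of free coordinates is $(p^n-1)r+(p^{n-r}-1)(n-r)$, which yields $t+r^2+(n-r)^2=(p^n-1)r+(p^{n-r}-1)(n-r)$. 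The coordinate projection $\Aut_{C/A,\{B,\m\}}\to\GL_r\times\GL_{n-r}\times\mathbb{A}_A^t$ is a morphism of $A$-schemes which is bijective on $R$-points for every $R$ (the inverse being the explicit reconstruction of $\sigma$), hence an isomorphism.

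For (2), I would use the translations $\tau_a\colon x_i\mapsto x_i+a_i$ attached to the points $a=(a_1,\ldots,a_n)\in\Spec(C)(R)=\{a\in R^n:a_i^p=0\}$; these are automorphisms, since $(x_i+a_i)^p=0$, and they preserve $B$. Given $\sigma\in\Aut_{C/A,\{B\}}(R)$, let $a_i$ be the constant term of $\sigma(x_i)$ and set $\sigma':=\sigma\circ\tau_a^{-1}$; then $\sigma'(x_i)=\sigma(x_i)-a_i\in\m_R$, so $\sigma'\in\Aut_{C/A,\{B,\m\}}(R)$ and $\sigma=\sigma'\circ\tau_a$. The morphism $(\sigma',a)\mapsto\sigma'\circ\tau_a$ from $\Aut_{C/A,\{B,\m\}}\times\Spec(C)$ is then bijective on $R$-points for every $R$, with inverse $\sigma\mapsto(\sigma',a)$, and hence is an isomorphism of $A$-schemes. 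The main obstacle throughout is the passage from set-theoretic bijections to isomorphisms of schemes over an arbitrary base: checking that a well-defined endomorphism with invertible linear part is really bijective over a general ring $R$ (handled by the nilpotent-filtration argument) and that the reconstruction maps are genuine morphisms.
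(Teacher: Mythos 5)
Your proof is correct and follows essentially the same route as the paper: both parametrize automorphisms by the coefficients of the images $\sigma(x_i)$, reduce invertibility to the invertibility of the block-triangular degree-one matrix (hence of its two diagonal blocks), and identify the constant terms with points of $\Spec(C)$. The only differences are cosmetic — you justify invertibility via the associated graded ring of the $\m$-adic filtration where the paper uses that a surjective endomorphism of a finitely generated module is bijective together with Nakayama, and you phrase part (2) via translations, which is precisely the interpretation recorded in the paper's remark after the proposition; your explicit check that the inclusions $\sigma(B\otimes_A R)\subseteq B\otimes_A R$ and $\sigma(\m\otimes_A R)\subseteq\m\otimes_A R$ are automatically equalities is a welcome extra detail that the paper leaves implicit.
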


\begin{proof}
		We use the special notation $\e_1,\ldots,\e_n$ for the canonical vectors of $\mb{N}_0^n$ and write $\0:=(0,\ldots,0)$. We set
		\begin{align*}
			\mc{A}:=&\{\alpha=(\alpha_1,\ldots,\alpha_n)\in\mb{N}_0^n: 0\leq\alpha_i<p\},
			&\mc{A}_+:=\mc{A}\backslash\{\0\},\\
			\mc{B}:=&\{\beta\in\mc{A}: \beta_i=0\quad\mbox{for } i\leq r\},
			&\mc{B}_+:=\mc{B}\cap\mc{A}_+.
		\end{align*}

		Fix an $A$-algebra $R$. 
		An endomorphism of $R$-algebras $\tau\colon C\otimes_A R\to C\otimes_A R$ satisfying $\tau(B\otimes_A R)\subset B\otimes_A R$
		is completely determined by its values $\tau(x_1),\ldots,\tau(x_n)$, which can be any elements $f_1,\ldots,f_n\in C\otimes_A R$ such that $f_i^p=0$, for $i=1,\ldots,n$, and $f_{r+1},\ldots,f_n\in B\otimes_A R$. 
		In other words, $f_1,\ldots,f_n$ can be any elements of the form
			\begin{align*}
			f_i&:=c_{i,\0}+ \sum_{\alpha\in\mc{A}_+} c_{i,\alpha}\x^\alpha,\quad \text{for } i=1,\ldots,r,\\
			f_i&:= c_{i,\0}+ \sum_{\beta\in\mc{B}_+} c_{i,\beta}\x^\beta,\quad \text{for } i=r+1,\ldots,n,
		\end{align*} 
	with $c_{i,\0}^p=0$ for $i=1,\ldots,n$. 
	
	We now determine when an $R$-endomorphism $\tau$ defined by $f_1,\ldots,f_n$ as above is bijective. Since $C\otimes_A R$ is a finitely generated $R$-module, this is the same as requiring that $\tau$ is surjective, or say, that $f_1,\ldots,f_n$ generate $C\otimes_A R$ as an $R$-algebra.
	This is in turn equivalent to saying that $f_i^+:=f_i-c_{i,\0}$, $i=1,\ldots,n$, generate $C\otimes_A R$ as $R$-algebra. 
	Since $\m\otimes_A R$ is nilpotent, this is the same as requiring that the classes of $f_1^+,\ldots,f_n^+\in\m\otimes_A R$ modulo $\m^2\otimes_A R$ generate the $R$-module $(\m/\m^2)\otimes_A R$, which is a free $R$-module of rank $n$. 
	Thus,  $\tau$ is bijective if and only if $(c_{i,\mathbf{e}_j})_{1\leq i,j\leq n}\in \GL_n(R)$. 
		Since $c_{i,\mathbf{e}_j}=0$ if $j\leq r$ and $i>r$, we have 
		$(c_{i,\mathbf{e}_j})_{1\leq i,j\leq n}\in \GL_n(R)$ if and only if $(c_{i,\mathbf{e}_j})_{1\leq i,j\leq r}\in \GL_r(R)$ and $(c_{i,\mathbf{e}_j})_{r+1\leq i,j\leq n}\in \GL_{n-r}(R)$.
		
		Note finally that we can identify $(c_{1,\0},\ldots,c_{n,\0})$ with a point of $\on{Hom}_A(C,R)$, the $A$-algebra homomorphism which maps $x_i$ to $c_{i,\0}$, and that we have $\tau(\m\otimes_A R)=\m\otimes_A R$ precisely when $c_{1,\0}=\cdots=c_{n,\0}=0$.
	
	All this shows that there are natural bijections
		\begin{align*}
			\Aut_{C/A,\{B\}}(R)&\cong \on{Hom}_A(C,R)\times \GL_r(R)\times \GL_{n-r}(R)\times \prod_{i=1}^r R^{\mc{A}'}\times\prod_{i=r+1}^n R^{\mc{B}'},\\
			\Aut_{C/A,\{B,\m\}}(R)&\cong \on{Hom}_A(C/\m,R)\times \GL_r(R)\times \GL_{n-r}(R)\times \prod_{i=1}^r R^{\mc{A}'}\times\prod_{i=r+1}^n R^{\mc{B}'},
		\end{align*}
		where $\mc{A}':=\mc{A}_+\backslash\{\e_{1},\ldots,\e_r\}$ and $\mc{B}':=\mc{B}_+\backslash\{\e_{r+1},\ldots,\e_n\}$. 
		In other words, $\Aut_{C/A,\{B\}}$ is representable by the affine $A$-scheme
		$\Spec(C)\times\GL_r\times\GL_{n-r}\times \mb{A}_A^t$, where $t=r|\mc{A}'|+(n-r)|\mc{B}'|$, and $\Aut_{C/A,\{B,\m\}}$ is represented by the closed subscheme $\GL_r\times\GL_{n-r}\times \mb{A}_A^t$. The proof follows as
		$|\mc{A}'|=p^n-r-1$ and $|\mc{B}'|=p^{n-r}-1-(n-r)$ and therefore
		\begin{align*}
			r^2+(n-r)^2+t&=r^2+(n-r)^2+r(p^n-r-1) +(n-r)(p^{n-r}-1-(n-r))\\
			&=(p^n-1)r+(p^{n-r}-1)(n-r)\,.
		\end{align*}
\end{proof}

\begin{remark}
	Since $\on{Hom}_A(C,R)=\{(c_1,\ldots,c_n)\in R^n\colon c_i^p=0\}$ is an additive subgroup of $R^n$, $\Spec(C)$ is actually an affine group scheme. In fact, $\Spec(C)$ can be identified with a subgroup of $\Aut_{C/A,\{B\}}$. Namely, an $R$-point $(c_1,\ldots,c_n)\in \Spec(C)(R)=\on{Hom}_A(C,R)$ can be identified with the $R$-automorphism $\tau\in \Aut_{C/A,\{B\}}(R)$ such that $\tau(x_i)=c_i+x_i$. 
\end{remark}

The previous proposition tells us that $\Aut_{C/A,\{B\}}$ is isomorphic to the $A$-scheme
$\Spec(C)\times\GL_r\times\GL_{n-r}\times \mb{A}_A^t$. So, for example, if $A$ is a reduced ring, then all factors are reduced except for the first, that corresponds with the ring $C$, which is non-reduced if $n>0$. In fact, in this case, the reduced scheme of $\Aut_{C/A,\{B\}}$ is $\Aut_{C/A,\{B,\m\}}$.

\begin{corollary}\label{cor: about Aut}
	If $A=k$ is a field, then $\Aut_{C/k,\{B,\m\}}$ is smooth and geometrically irreducible of dimension $(p^n-1)r+(p^{n-r}-1)(n-r)$. In addition, it is the reduced scheme of $\Aut_{C,\{B\}}$.
\end{corollary}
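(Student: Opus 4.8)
The plan is to read off every assertion directly from the two structural isomorphisms in the preceding proposition, specialized to $A=k$; no genuinely new argument is required, only the assembly of standard facts about products of $k$-schemes.

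First I would apply the proposition with $A=k$ to obtain an isomorphism of $k$-schemes
\[
\Aut_{C/k,\{B,\m\}}\cong \GL_r\times\GL_{n-r}\times\mb{A}_k^t,\qquad r^2+(n-r)^2+t=(p^n-1)r+(p^{n-r}-1)(n-r).
\]
Each $\GL_m$ is the principal open subscheme of $\mb{A}_k^{m^2}$ where the determinant is invertible; it is therefore smooth of dimension $m^2$, and, being a localization of a polynomial ring, it remains a domain after any extension of the base field, hence is geometrically irreducible. Likewise $\mb{A}_k^t$ is smooth and geometrically irreducible of dimension $t$. Since smoothness, geometric irreducibility, and additivity of dimension are all preserved under fibered products over $k$, I conclude that $\Aut_{C/k,\{B,\m\}}$ is smooth and geometrically irreducible of dimension $r^2+(n-r)^2+t$, which equals $(p^n-1)r+(p^{n-r}-1)(n-r)$ by the displayed relation. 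This gives the first assertion.

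For the second assertion I would invoke the other isomorphism from the proposition, $\Aut_{C/k,\{B\}}\cong \Aut_{C/k,\{B,\m\}}\times\Spec(C)$. Writing $X:=\Aut_{C/k,\{B,\m\}}$, the coordinate ring of the product is $\mc{O}_X\otimes_k C$. Because $C=k[X_1,\ldots,X_n]/\langle X_1^p,\ldots,X_n^p\rangle$ decomposes as $C=k\oplus\m$ with $\m$ a nilpotent ideal, we get $\mc{O}_X\otimes_k C=\mc{O}_X\oplus(\mc{O}_X\otimes_k\m)$, in which $\mc{O}_X\otimes_k\m$ is a nilpotent ideal and the quotient $\mc{O}_X$ is reduced (as $X$ is smooth over $k$). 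Hence the nilradical is exactly $\mc{O}_X\otimes_k\m$, and $(\Aut_{C/k,\{B\}})_{\mathrm{red}}\cong X=\Aut_{C/k,\{B,\m\}}$, as claimed.

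The one step I would treat with care is this last reduced-scheme computation over a possibly imperfect field $k$, since in general $(Y\times_k Z)_{\mathrm{red}}\neq Y_{\mathrm{red}}\times_k Z_{\mathrm{red}}$. The difficulty evaporates here precisely because the second factor satisfies $\Spec(C)_{\mathrm{red}}=\Spec(k)$: its nilradical $\m$ is literally nilpotent with complementary summand $k$, so all nilpotents of the product come from $\m$ and the argument is insensitive to whether $k$ is perfect. Everything else is formal.
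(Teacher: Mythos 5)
Your proposal is correct and follows essentially the same route the paper intends: the corollary is stated without a separate proof, and the remark preceding it derives both assertions exactly as you do, from the product decompositions $\Aut_{C/k,\{B,\m\}}\cong\GL_r\times\GL_{n-r}\times\mb{A}_k^t$ and $\Aut_{C/k,\{B\}}\cong\Aut_{C/k,\{B,\m\}}\times\Spec(C)$, with the nilradical of the latter coming entirely from the nilpotent ideal $\m\subset C$. Your explicit care about the imperfect-field subtlety in identifying the reduced scheme is a sound (and slightly more detailed) justification of the step the paper leaves implicit.
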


\section{The Yuan scheme}\label{section: Yuan scheme}

Let $A\subset C$ be a Galois extension of exponent one and let $\g:=\on{Der}_A(C)$ be the $C$-module of $A$-derivations. Then $\Omega_{C/A}$ is a finitely generated projective $C$-module (Proposition \ref{prop: Galois with omega}) and hence, so is $\g=(\Omega_{C/A})^*$. Recall that given $D, D'\in \g$, the Lie bracket $[D,D']:=D\circ D'-D'\circ D$ and  the $p$-times composition $D^p:=D\circ\cdots\circ D$ are again $A$-derivations. These operations make $\g$ into a restricted Lie algebra over $A$. 

To any $A$-subalgebra $B\subset C$ one associates a $C$-submodule $\g_{C/B}:=\on{Der}_B(C)\subset \g$. This submodule is closed under the Lie bracket and the operation $D\mapsto D^p$. In other words, $\g_{C/B}$ is both a $C$-submodule and a restricted Lie $A$-subalgebra of $\g$. Conversely, any $C$-submodule $\h\subset \g$ that is also a restricted Lie $A$-subalgebra (and more generally, any subset of $\g$) defines an $A$-subalgebra of $C$, namely $\ker(\h):=\{x\in C: D(x)=0\quad\forall D\in \h\}$.

In \cite[Theorem 13]{Yuan70}, Yuan proved that the application $B\mapsto \g_{C/B}$ defines a one-to-one correspondence between the $A$-subalgebras $B\subset C$ such that $C$ is also Galois over $B$ and the restricted Lie $A$-subalgebras $\h\subset \g$ that are $C$-module direct summand of $\g$.
The inverse of this correspondence is given by $\h\mapsto\ker(\h)$.

In this section, we consider the family of all $A$-subalgebras $B\subset C$ such that $C$ is Galois over $B$ of a fixed rank, and our task is to parameterize this family with an $A$-scheme.

\begin{definition}
	Fix a Galois extension $A\subset C$ of differential rank $n$ and an integer $r$, $0\leq r\leq n$.  The \textit{Yuan functor} associated to $C$ and $r$ is the following covariant functor from the category of $A$-algebras to the category of sets 
	\begin{align*}
		\mc{Y}_{C/A}^r \colon \on{Alg}_A&\to\on{Sets}\\
		A'&\mapsto\left\{B': \begin{array}{l}
		A'\subset B'\subset C\otimes_A A', \\ \text{ $B'\subset C\otimes_A A'$ is Galois of differential rank $r$}
		\end{array}\right\}\, .
	\end{align*} 
\end{definition}

Recall that, given  $B'\in\Y_{C/A}^r(A')$, $A'\subset B'$ is Galois of differential rank $n-r$
(Proposition \ref{prop: Galois intermediate extension} and Lemma \ref{lem: on differential rank}).
 
\begin{remark}\label{rem: base change of Yuan}
	By the nature of the definition, if $A'$ is an $A$-algebra and $C':=C\otimes_A A'$, then $\Y_{C'/A'}^r=\Y_{C/A}^r\times_A\Spec(A')$.
\end{remark}

The goal of this section is to prove first that $\Y_{C/A}^r$ is representable by a quasi-projective $A$-scheme and then to study some relevant geometric properties.
Our proof of the fact that $\Y_{C/A}^r$ is representable will follow from a more general result, Theorem \ref{th: representability in general}, which establishes the representability of a large class of functors including the Yuan functors. We introduce some preliminaries for the proof of this general theorem.

\subsection{Preliminaries on projective modules}

We collect some lemmas on projective modules for the sake of convenience of the readers. In this section we do not assume that the rings have characteristic $p$. Recall that for an $A$-module $M$ the following are equivalent \cite[Proposition B.27]{GorWed10}:
\begin{itemize}
	\item $M$ is a finitely generated projective $A$-module.
	\item $M$ is a finitely presented $A$-module and for each $\p\in\Spec(A)$, the $A_\p$-module $M_\p$ is free.
\end{itemize}
A finitely generated projective $A$-module $M$ is said to have \textit{rank} $m$ if for each prime $\p\subset A$, the free $A_\p$-module $M_\p$ has rank $m$. If $M$ is projective of rank $m$ as $A$-module, then for any $A$-algebra $A'$,  $M\otimes_A A'$ is projective of rank $m$ as $A'$-module.

Our main tool to detect whether a finitely presented $A$-module $M$ is projective will be given by the  {\em Fitting ideals}
$$0\subset \on{Fitt}_0(M)\subset \on{Fitt}_1(M)\subset\cdots \subset A.$$
For the definition and a detailed introduction we refer to \cite[Appendix D]{Kunz86}. We write $\on{Fitt}_i(M)=\on{Fitt}_i^A(M)$ when we want to emphasize the base ring. The Fitting ideals behave well under localization and base change in the sense that if $A'$ is an $A$-algebra, then $\on{Fitt}^{A'}_i(M\otimes_A A')$ is the extended ideal of $\on{Fitt}^A_i(M)$ under $A\to A'$ (\cite[Appendix D.4]{Kunz86}).
The key property will be the following characterization: 

\begin{lemma}\label{lem: projective modules via Fitting ideals}
	Let $M$ be a finitely presented $A$-module.
	Then $M$ is projective of rank $m$ if and only if $\on{Fitt}^A_i(M)=0$, for $i<m$, and $\on{Fitt}_m(M)=A$.
\end{lemma}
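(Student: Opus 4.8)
The plan is to reduce the statement to the case of a local ring, using the localization and base-change behaviour of Fitting ideals recalled in the text, and then to read the Fitting ideals off a \emph{minimal} presentation. Throughout I use the standard convention that, for a finite presentation $A^s\xrightarrow{\phi} A^t\to M\to 0$, the ideal $\on{Fitt}_i(M)$ is generated by the $(t-i)$-minors of the $t\times s$ matrix $\phi$, with the conventions that a $0$-minor equals $1$ (so $\on{Fitt}_i(M)=A$ as soon as $i\geq t$) and that $\on{Fitt}_i(M)=0$ when no minors of the required size exist. As a warm-up I would record the Fitting ideals of a free module: taking the trivial presentation of $A^m$ (no relations, so $\phi$ is the $m\times 0$ empty matrix), one reads off $\on{Fitt}_i(A^m)=0$ for $i<m$ and $\on{Fitt}_i(A^m)=A$ for $i\geq m$.

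For the direct implication, suppose $M$ is projective of rank $m$, so that $M_\p$ is free of rank $m$ for every $\p\in\Spec(A)$. Using the base-change property, $\on{Fitt}_i^A(M)_\p=\on{Fitt}_i^{A_\p}(M_\p)$ equals $0$ for $i<m$ and $A_\p$ for $i=m$, for every $\p$. An ideal that localizes to $0$ at every prime is itself $0$, and an ideal that localizes to the whole ring at every prime is contained in no maximal ideal and hence equals $A$. This gives $\on{Fitt}_i(M)=0$ for $i<m$ and $\on{Fitt}_m(M)=A$.

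For the converse, since $M$ is finitely presented it suffices, by the local criterion recalled in the text, to show that $M_\p$ is free of rank $m$ for each $\p$; as the hypotheses pass to $M_\p$ by base change, I may assume $A=(R,\m)$ is local. Here I would choose a minimal presentation $R^s\xrightarrow{\phi} R^d\xrightarrow{\pi} N\to 0$, where $d=\dim_{R/\m}(N/\m N)$ (finiteness of $s$ coming from finite presentation of $N$). Minimality forces $\on{im}\phi=\ker\pi\subseteq\m R^d$, so every entry of $\phi$ lies in $\m$; consequently every $k$-minor with $k\geq 1$ lies in $\m$, giving $\on{Fitt}_i(N)\subseteq\m$ for $i<d$ while $\on{Fitt}_i(N)=R$ for $i\geq d$. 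Comparing this with the hypotheses $\on{Fitt}_m(N)=R$ (forcing $m\geq d$) and $\on{Fitt}_{m-1}(N)=0\neq R$ (forcing $m\leq d$) pins down $d=m$; the degenerate case $m=0$ forces $d=0$ and $N=0$. Finally, for $m\geq 1$, the ideal $\on{Fitt}_{m-1}(N)=\on{Fitt}_{d-1}(N)$ is generated by the $1$-minors of $\phi$, that is by its entries, and these vanish since $\on{Fitt}_{m-1}(N)=0$; hence $\phi=0$, $\pi$ is an isomorphism, and $N\cong R^m$ is free of rank $m$.

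The computations are all routine, and the main obstacle is the local step of the converse: one must secure a genuinely minimal finite presentation over $R$ and exploit that its matrix has entries in $\m$, which is exactly what makes the Fitting ideals legible and forces both $d=m$ and $\phi=0$. Keeping the boundary case $m=0$ straight (where the argument degenerates to $N=0$) is the only other point requiring attention.
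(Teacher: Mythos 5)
Your proof is correct. Both directions are handled properly: the forward direction via the localization/base-change compatibility of Fitting ideals together with the two standard facts that an ideal vanishing at every localization is zero and an ideal that is locally the unit ideal is the unit ideal; the converse via reduction to a local ring $(R,\m)$ and a minimal presentation, where the inclusion $\ker\pi\subseteq\m R^d$ makes all positive-size minors land in $\m$, pinning down $d=m$ and then forcing $\phi=0$ from $\on{Fitt}_{m-1}=0$. You also correctly isolate the boundary case $m=0$. The only implicit ingredients are standard: independence of Fitting ideals from the chosen presentation, and the fact that a surjection $A^d\to M$ from a finite free module onto a finitely presented module has finitely generated kernel (needed to get a finite minimal presentation locally).

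The comparison with the paper is quickly stated: the paper does not prove the lemma at all, but simply observes that it follows from a result in Kunz's appendix on Fitting ideals (\cite[Appendix D.16]{Kunz86}) combined with the finite-presentation hypothesis. Your argument is the standard self-contained proof underlying that cited result; it buys independence from the reference at the cost of a page of routine verification, and it makes explicit exactly which properties of Fitting ideals (base change, behaviour under minimal presentations) are being used elsewhere in the paper.
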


\begin{proof}
	This is a consequence of \cite[Appendix D.16]{Kunz86} and the assumption of finite presentation.
\end{proof}

We now discuss properties that hold for finite ring extensions $A\subset C$ such that $C$ is a projective $A$-module.

\begin{lemma}\label{lem: finite projective extensions}
	Let $A\subset C$ be a finite extension of rings such that $C$ is a projective $A$-module. Then the quotient $C/A$ is projective as $A$-module and, thus, the sequence $0\to A \to C \to C/A\to 0$ is split exact. 
\end{lemma}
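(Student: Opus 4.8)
The plan is to prove that $C/A$ is projective by showing it is finitely presented and locally free, and then to invoke the equivalence recalled at the beginning of this subsection (finitely presented $+$ locally free $\Leftrightarrow$ finitely generated projective). Once $C/A$ is known to be projective, the surjection $C\to C/A$ splits by the universal property of projective modules, which is exactly the splitness of the sequence $0\to A\to C\to C/A\to 0$. So the whole statement reduces to the single assertion that $C/A$ is projective.

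First I would record the easy finiteness input. The submodule $A\subset C$ is cyclic, generated by $1_C$, hence finitely generated, and $C$ is finitely presented over $A$ because it is finite projective. Therefore the quotient $C/A$ is finitely presented. It then remains to verify that $(C/A)_\p\cong C_\p/A_\p$ is free over $A_\p$ for every $\p\in\Spec(A)$. Fix such a $\p$ and localize: $C_\p$ is free over the local ring $A_\p$, of some rank $m$, and $A_\p$ sits inside $C_\p$ as the rank-one free submodule spanned by $1_C$.

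The crux is to show that $1_C$ is a \emph{unimodular} element of the free module $C_\p$, i.e. that its image in the fiber $C_\p/\p C_\p=C\otimes_A\kappa(\p)$ is nonzero. This fiber is a nonzero ring: since $A\subset C$, the module $C$ is faithful, so $C_\p\neq 0$, and Nakayama's Lemma gives $C_\p\neq \p C_\p$; the unit of a nonzero ring is nonzero. Hence the reduction of $1_C$ is nonzero in $C_\p/\p C_\p$, and a single element of a finite free module over a local ring whose reduction modulo the maximal ideal is nonzero can be completed to a basis. Thus $A_\p\cdot 1_C=A_\p$ is a free direct summand of $C_\p$, and $(C/A)_\p\cong C_\p/A_\p$ is free of rank $m-1$. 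Since $\p$ was arbitrary, $C/A$ is finitely presented and locally free, hence projective, and the sequence splits.

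The only substantive step is the unimodularity of $1_C$ in each fiber, which is what makes the inclusion $A\hookrightarrow C$ split locally; it follows at once from $C$ being a faithful $A$-module. Everything else is the routine local-freeness criterion. (One could equally package the local verification through the Fitting ideals of Lemma \ref{lem: projective modules via Fitting ideals}, but the direct fiberwise argument seems cleaner here.)
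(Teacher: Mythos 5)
Your proof is correct and follows essentially the same route as the paper's: reduce to the local case, observe that $1_C$ survives in the fiber $C_\p/\p C_\p$ and hence extends to a basis of the free module $C_\p$ by Nakayama, so $(C/A)_\p$ is free, and combine this with finite presentation of $C/A$ to get projectivity. Your explicit justification of the unimodularity of $1_C$ via faithfulness is a point the paper leaves implicit, but the argument is the same.
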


\begin{proof}
	Since $C$ is a finitely generated projective $A$-module, for each $\p\in\Spec(A)$, $C_\p$ is a free $A_\p$-module of finite rank. 
	By Nakayama's Lemma, a basis for $C_\p$ over $A_\p$ can be obtained by lifting any basis of $C_\p/\p C_\p$ over $A_\p/\p A_\p$. A basis for this vector space can be always chosen so as to contain the vector $1$. Hence $C_\p$ has a basis over $A_\p$ containing 1. It follows that $A_\p$ is a direct summand of $C_\p$ and therefore $(C/A)_\p=C_\p/A_\p$ is a free $A_\p$-module.
	 Finally, $C/A$ is finitely presented as $A$-module since $C$ and $A$ are so. Thus $C/A$ is projective as $A$-module.
\end{proof}

\begin{lemma}\label{lem: intermediate subring}
	Let $A\subset C$ be a finite extension of rings such that $C$ is a projective $A$-module of rank $m$. 
	Fix an $A$-subalgebra $B\subset C$ such that $C$ is a projective $B$-module of rank $q$. Then
	\begin{enumerate}
		\item $q|m$ and $B$ is a finitely generated projective $A$-module of rank $\frac{m}{q}$.
		\item The quotient module $C/B$ is a projective $A$-module of rank $m-\frac{m}{q}$.
	\end{enumerate}
\end{lemma}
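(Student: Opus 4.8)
```latex
The plan is to prove both claims simultaneously by establishing that the relevant modules are locally free of the stated ranks, and then invoking the equivalence between finitely generated projective modules and finitely presented locally free modules recorded at the start of this subsection. The main tool will be localization at primes of $A$, combined with the fact that $\Spec(C)\to\Spec(B)\to\Spec(A)$ are homeomorphisms (each extension has exponent one, so these maps are radicial and integral, hence induce homeomorphisms on spectra). This last point is what lets me pass freely between localizing at a prime of $A$ and localizing at the corresponding prime of $B$ or $C$.

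First I would fix $\p\in\Spec(A)$ and let $\q\subset B$ be the unique prime lying over $\p$. Localizing, I obtain that $C_\p$ is a free $A_\p$-module of rank $m$ and, using the homeomorphism $\Spec(C)\simeq\Spec(B)$, that $C_\q=C_\p$ is a free $B_\q=B_\p$-module of rank $q$. The idea is to compare ranks by counting dimensions of residue-field fibers. Tensoring $C_\p$ (free of rank $m$ over $A_\p$) with $\kappa(\p)$ gives a $\kappa(\p)$-vector space of dimension $m$, while the chain $A_\p\subset B_\p\subset C_\p$ together with the rank-$q$ freeness of $C_\p$ over $B_\p$ forces $\dim_{\kappa(\p)}(C_\p\otimes_{A_\p}\kappa(\p)) = q\cdot\dim_{\kappa(\p)}(B_\p\otimes_{A_\p}\kappa(\p))$ once I know $B_\p$ is free over $A_\p$. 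Thus I must first establish local freeness of $B_\p$ over $A_\p$; this follows because $C_\p$ is a faithfully flat (indeed free) $B_\p$-module and $B_\p\hookrightarrow C_\p$ splits as $B_\p$-modules by Lemma \ref{lem: finite projective extensions} applied to $B_\p\subset C_\p$, so $B_\p$ is a direct summand of the free $A_\p$-module $C_\p$ and hence projective, thus free over the local ring $A_\p$. Writing its rank as $s$, the multiplicativity $m = q\cdot s$ gives $q\mid m$ and $s=\frac{m}{q}$, proving the rank assertion in part (1).

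To upgrade from local freeness to finite generation and projectivity globally, I would check finite presentation. Since $A\subset C$ is a finite extension with $C$ projective (hence finitely presented) over $A$, and $B$ is an $A$-submodule that is a direct summand of $C$ (again by the splitting in Lemma \ref{lem: finite projective extensions}, now over $A$ via the composite $A\subset B\subset C$, or by the local splittings patched), $B$ is itself a direct summand of a finitely presented $A$-module and therefore finitely presented. Combined with local freeness of constant rank $\frac{m}{q}$ at every prime, the equivalence cited before Lemma \ref{lem: projective modules via Fitting ideals} yields that $B$ is finitely generated projective of rank $\frac{m}{q}$. This completes part (1).

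For part (2), I would use the short exact sequence $0\to B\to C\to C/B\to 0$ of $A$-modules. Having shown $B$ is a direct summand of $C$ as an $A$-module, this sequence splits, so $C/B$ is itself a direct summand of the projective $A$-module $C$ and is therefore projective. Its rank is computed pointwise as $\on{rank}(C)-\on{rank}(B) = m - \frac{m}{q}$, which is constant, giving the claim. The main obstacle I anticipate is the bookkeeping around the three homeomorphic spectra: I must be careful that ``localizing at $\p$'' on the $B$-side and $C$-side really refers to the unique prime over $\p$, so that the local freeness ranks multiply correctly; once the homeomorphism $\Spec(C)\simeq\Spec(B)\simeq\Spec(A)$ is invoked to identify these localizations, the rank multiplicativity $m=qs$ and the splitting of the extension sequence make everything routine.
```
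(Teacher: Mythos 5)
Your overall strategy matches the paper's: split $B$ off $C$ via Lemma \ref{lem: finite projective extensions} to get projectivity of $B$ and $C/B$ over $A$, then prove the rank multiplicativity $m=qs$ fiberwise. However, there is a genuine error in how you justify the fiberwise computation. You assert that ``each extension has exponent one, so these maps are radicial and integral, hence induce homeomorphisms on spectra,'' and you then speak of \emph{the} unique prime $\q$ of $B$ over $\p$ and identify $B_\p$ with the local ring $B_\q$. But the lemma carries no exponent-one or characteristic-$p$ hypothesis --- the subsection explicitly states that these assumptions are dropped --- and it must hold in that generality, since it is what makes $\mc{G}_{C/A}^q$ a subfunctor of the Grassmannian for an arbitrary finite extension with $C$ projective of rank $m$. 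For such extensions $\Spec(C)\to\Spec(B)\to\Spec(A)$ need not be homeomorphisms (take $A=k$ and $C=k\times k\times k\times k$ with $B=k\times k$ embedded diagonally in pairs), so $B_\p$ need not be local and your deduction that $C_\p$ is \emph{free} of rank $q$ over $B_\p$ --- the step on which $m=q\cdot s$ rests --- is unjustified as written.

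The gap is repairable without changing your plan: either note that $B_\p$ is semilocal (being module-finite over the local ring $A_\p$), so that a finitely generated projective $B_\p$-module of constant rank is free; or, as the paper does, tensor the whole chain $A\subset B\subset C$ with $\kappa(\p)$ first, so that $B\otimes_A\kappa(\p)$ is an Artinian ring, over which the constant-rank projective module $C\otimes_A\kappa(\p)$ is free of rank $q$, giving $m=q\cdot\dim_{\kappa(\p)}(B\otimes_A\kappa(\p))$ directly. A small further remark: once $B$ is exhibited as an $A$-module direct summand of the finitely generated projective module $C$, it is automatically finitely generated projective over $A$; the detour through finite presentation plus local freeness and the criterion preceding Lemma \ref{lem: projective modules via Fitting ideals} is correct but unnecessary. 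Part (2) as you argue it is fine once part (1) is fixed.
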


\begin{proof}
	By Lemma \ref{lem: finite projective extensions}, $B$ is a direct summand of $C$ as a $B$-module and hence also as an $A$-module. Since $C$ is finitely generated projective as $A$-module, both $B$ and $C/B$ are also finitely generated projective $A$-modules. We will now show that $B$ has rank $s$ with $qs=m$. This clearly implies the assertions about the ranks.

	After tensoring with $\kappa(\p)$ for some $\p\in\Spec(A)$, we may assume that $A$ is a field.	Let $s$ be the dimension of $B$ over $A$, so that $B\cong A^s$ as $A$-modules. Note that $B$ is an Artinian ring and $C$ is a finitely generated projective $B$-module of constant rank $q$. Thus, $C\cong B^q$ as $B$-modules. 
	Finally, $C\cong A^m$ as $A$-modules and therefore $A^m\cong C\cong B^q\cong(A^s)^q$, which implies $m=qs$. This completes the proof of the lemma. 
\end{proof}

As we shall see later, some arguments concerning finite extensions, $A\subset C$, can be simplified when $A$ is Noetherian. The following result will be helpful.

\begin{lemma}\label{lem: reduction to a neotherian situation}
	Let $A\subset C$ be a finite extension such that $C$ is projective of rank $m$ as $A$-module.
	Then there is a Noetherian subring $A_0\subset A$ and a ring extension $A_0\subset C_0$ such that $C_0$ is finitely generated projective of rank $m$ as $A_0$-module and $C=C_0\otimes_{A_0} A$. 
\end{lemma}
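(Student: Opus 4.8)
The plan is to descend the data defining the finite projective $A$-algebra $C$ to a finitely generated subring of $A$, using the standard finite-presentation (spreading-out) technique together with the paper's Fitting-ideal criterion. First I would write $A$ as the filtered union of the family $\{A_\lambda\}$ of its subrings finitely generated over the prime subring; by Hilbert's basis theorem each $A_\lambda$ is Noetherian, and $A=\varinjlim_\lambda A_\lambda$ with every transition map $A_\lambda\hookrightarrow A_{\lambda'}$ injective. Since $C$ is finitely generated projective over $A$ it is finitely presented, so it admits a presentation $A^q\xrightarrow{\Phi}A^p\to C\to 0$. The finitely many entries of $\Phi$ lie in some $A_{\lambda_0}$, and setting $C_{\lambda_0}:=\on{coker}(A_{\lambda_0}^q\xrightarrow{\Phi}A_{\lambda_0}^p)$ and $C_\lambda:=C_{\lambda_0}\otimes_{A_{\lambda_0}}A_\lambda$ for $\lambda\geq\lambda_0$ produces finitely presented $A_\lambda$-modules with $C_\lambda\otimes_{A_\lambda}A=C$ and $\varinjlim_\lambda C_\lambda=C$, because cokernels and tensor products commute with filtered colimits.

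Next I would upgrade $C_\lambda$ to a projective module of rank $m$ for $\lambda$ large, using Lemma \ref{lem: projective modules via Fitting ideals}. Since Fitting ideals commute with base change, $\on{Fitt}_i^{A_{\lambda_0}}(C_{\lambda_0})\cdot A=\on{Fitt}_i^A(C)$, which is $0$ for $i<m$ and all of $A$ for $i=m$. As $A_{\lambda_0}$ is Noetherian, each $\on{Fitt}_i^{A_{\lambda_0}}(C_{\lambda_0})$ is finitely generated; for $i<m$ its finitely many generators map to $0$ in $A=\varinjlim A_\lambda$, hence already in some $A_\lambda$, while the relation $1\in\on{Fitt}_m^A(C)$ involves finitely many elements of $A$ lying in some $A_\lambda$. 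Choosing $\lambda$ above all of these finitely many thresholds gives $\on{Fitt}_i^{A_\lambda}(C_\lambda)=0$ for $i<m$ and $\on{Fitt}_m^{A_\lambda}(C_\lambda)=A_\lambda$, so $C_\lambda$ is projective of rank $m$, and this persists under any further base change.

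It remains to descend the $A$-algebra structure. The multiplication $\mu\colon C\otimes_A C\to C$ and the unit $A\to C$ are $A$-module maps whose source is finitely presented and is the base change of $C_\lambda\otimes_{A_\lambda}C_\lambda$, respectively of $A_\lambda$; by the standard fact that $\on{Hom}$ out of a finitely presented module commutes with the filtered colimit $A=\varinjlim A_\lambda$ (cf. EGA IV, \S 8), both maps descend to $A_\lambda$-maps for $\lambda$ large, and the finitely many algebra axioms---$A_\lambda$-bilinearity, associativity, commutativity, the unit law---are equalities of maps out of finitely presented modules and so also hold after enlarging $\lambda$. This makes $C_0:=C_\lambda$ a commutative $A_0:=A_\lambda$-algebra with $C_0\otimes_{A_0}A\cong C$ as $A$-algebras. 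The structure map $A_0\to C_0$ is automatically injective: since filtered colimits are exact, $\ker(A\to C)=\varinjlim\ker(A_\lambda\to C_\lambda)$, and because $A\to C$ is injective while each transition $A_\lambda\hookrightarrow A_{\lambda'}$ is injective, every $\ker(A_\lambda\to C_\lambda)$ must vanish. Thus $A_0\subset C_0$ is the required ring extension.

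The only genuinely delicate point is the descent of the algebra structure in the last paragraph, namely the justification that morphisms and equalities of morphisms between base-changed finitely presented modules are detected at a finite stage; this is the classical spreading-out lemma and should be cited rather than reproved. The remaining steps are routine given the paper's Fitting-ideal criterion and the exactness of filtered colimits.
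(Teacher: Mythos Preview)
Your proof is correct and follows essentially the same strategy as the paper's: descend the data to a finitely generated (hence Noetherian) subring of $A$ and then use the Fitting-ideal criterion (Lemma \ref{lem: projective modules via Fitting ideals}) to secure projectivity of rank $m$. The only organizational difference is that the paper first descends $C$ as a finite \emph{algebra} (citing \cite[Lemma B.9 and Proposition B.52]{GorWed10}) and afterwards enlarges the base to make $\on{Fitt}_m$ the unit ideal, whereas you descend $C$ as a module via a presentation matrix, fix the Fitting ideals, and then descend the algebra structure separately (citing EGA IV, \S 8); both routes rely on the same standard spreading-out lemmas. One small simplification: your treatment of the lower Fitting ideals is more roundabout than necessary---since each $A_\lambda$ is a \emph{subring} of $A$, the vanishing $\on{Fitt}_i^{A_\lambda}(C_\lambda)\cdot A=\on{Fitt}_i^A(C)=0$ for $i<m$ immediately forces $\on{Fitt}_i^{A_\lambda}(C_\lambda)=0$ already at $\lambda_0$, with no further enlargement needed (this is exactly the paper's argument).
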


\begin{proof}
	Since $C$ is a finitely presented as $A$-module, it is a finite $A$-algebra of finite presentation \cite[Propostion B.12]{GorWed10}.
	By expressing $A$ as the inductive limit of its Noetherian $\mb{Z}$-subalgebras and applying \cite[Lemma B.9 and Proposition B.52]{GorWed10} we obtain a Noetherian subring $A_0'$ of $A$ and a finite $A_0'$-algebra $C_0'$ such that $C_0'\otimes_{A_0'}A=C$. Note that $C_0'$ is an $A_0'$-module of finite presentation because $A_0'$ is Noetherian.

    Now $C$ is projective of rank $m$ as $A$-module, so that  $\on{Fitt}^A_i(C)=0$ for $i<m$ and $\on{Fitt}^A_m(C)=A$ by Lemma \ref{lem: projective modules via Fitting ideals}. Since $\on{Fitt}^{A_0'}_m(C_0')A=\on{Fitt}^A_m(C)=A$, there are $f_1,\ldots,f_s\in \on{Fitt}^{A_0'}_m(C_0')$ and $h_1,\ldots,h_s\in A$ such that $f_1h_1+\cdots+f_sh_s=1$. We define $A_0$ as the $A_0'$-subalgebra of $A$ generated by $h_1,\ldots,h_s$, and set $C_0:=C_0'\otimes_{A_0'} A_0$.  It follows that $C_0$ is an $A_0$-module of finite presentation.
	
	We claim that $A_0\to C_0$ satisfies the required conditions.
	It only remains to show that $C_0$ is a projective $A_0$-module of rank $m$ (which implies in particular that $A_0\to C_0$ is injective).
 	To establish this we shall use Lemma \ref{lem: projective modules via Fitting ideals}.
	On the one hand, $\on{Fitt}^{A_0}_m(C_0)=\on{Fitt}^{A_0'}_m(C_0')A_0$, and this is equal to $A_0$ by construction of $A_0$.
	On the other hand, if $i<m$ then $0=\on{Fitt}^A_i(C)=\on{Fitt}^{A_0}_i(C_0)A$, hence $\on{Fitt}^{A_0}_i(C_0)=0$ as $A_0$ is a subring of $A$. Thus, $C_0$ is a projective $A_0$-module of rank $m$ by Lemma \ref{lem: projective modules via Fitting ideals}.
\end{proof}

\subsection{Grassmannian of subalgebras}

Let $A\subset C$ be a finite extension of rings (not necessarily of characteristic $p$) such that $C$ is a projective $A$-module of rank $m$. 
Fix a positive divisor $q|m$. Then the association 
\begin{align*}
	\mc{G}_{C/A}^q\colon \on{Alg}_A&\to \on{Sets}\\
	A'&\mapsto \left\{B': \begin{array}{l} \text{$B'$ is $A'$-subalgebra of $C':=C\otimes_A A'$},\\ \text{$C'$ is a projective $B'$-module of rank $q$} \end{array} \right\}
\end{align*}
defines a covariant functor.

 In fact, Lemma \ref{lem: intermediate subring} 
shows that $\mc{G}_{C/A}^q$ is a subfunctor of the Grassmannian functor
\begin{align*}
	\on{Grass}_{C/A}^{m-\frac{m}{q}}\colon \on{Alg}_A&\to\on{Sets}\\
	A'&\mapsto \left\{ B': \begin{array}{l}
	\text{$B'$ is an $A'$-submodule  
	of $C':=C\otimes_A A'$},\\
	\text{$C'/B'$ is a projective $A'$-module of local rank $m-\frac{m}{q}$}
	 \end{array} \right\}
\end{align*} 
It is well-known that $\on{Grass}_{C/A}^{m-\frac{m}{q}}$ is representable by a projective $A$-scheme \cite[Proposition 8.14]{GorWed10}.

\begin{remark}\label{rem: base change of G}
	By the nature of the definition, if $A'$ is an $A$-algebra and $C':=C\otimes_A A'$, then the inclusion of functors $\mc{G}_{C'/A'}^q\subset \on{Grass}_{C'/A'}^{m-\frac{m}{q}}$ is the base change of the inclusion $\mc{G}_{C/A}^q\subset \on{Grass}_{C/A}^{m-\frac{m}{q}}$. 
\end{remark}

\begin{theorem}\label{th: representability in general}
	$\mc{G}_{C/A}^q$ is representable by a quasi-projective scheme over $A$.
	More precisely, the inclusion of functors $\mc{G}_{C/A}^q\subset \on{Grass}_{C/A}^{m-\frac{m}{q}}$ can be factorized as the composition of a quasi-compact open immersion followed by a closed immersion.
\end{theorem}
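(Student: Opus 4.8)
\emph{The plan} is to factor the inclusion as $\mc{G}_{C/A}^q\hookrightarrow Z\hookrightarrow \on{Grass}_{C/A}^{m-\frac{m}{q}}$, where $Z$ is the closed subscheme of the Grassmannian parametrizing \emph{subalgebras} and $\mc{G}_{C/A}^q$ is a quasi-compact open subscheme of $Z$. Since open immersions, closed immersions and quasi-compactness are all stable under base change, and the inclusion of functors is compatible with base change (Remark \ref{rem: base change of G}), Lemma \ref{lem: reduction to a neotherian situation} lets me reduce to the case where $A$ is Noetherian, which I assume from now on. Writing $\on{Grass}:=\on{Grass}_{C/A}^{m-\frac{m}{q}}$, recall that it carries a universal short exact sequence
\[
0\to \mc{B}\to C\otimes_A\mc{O}_{\on{Grass}}\to \mc{Q}\to 0
\]
with $\mc{Q}$ locally free of rank $m-\frac{m}{q}$ and $\mc{B}$ a local direct summand, locally free of rank $\frac{m}{q}$.

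First I would cut out $Z$ by imposing the two subalgebra conditions on $\mc{B}$. The condition ``$1\in\mc{B}$'' is the vanishing of the image in $\mc{Q}$ of the global section $1\in C$, and the condition ``$\mc{B}$ is closed under multiplication'' is the vanishing of the composite $\mc{B}\otimes_{\mc{O}_{\on{Grass}}}\mc{B}\to (C\otimes_A C)\otimes_A\mc{O}_{\on{Grass}}\xrightarrow{\mu\otimes\on{id}}C\otimes_A\mc{O}_{\on{Grass}}\to \mc{Q}$ induced by the multiplication $\mu$ of $C$. Both are vanishing loci of morphisms of locally free sheaves, hence closed; let $Z$ be their intersection. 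By construction the $A'$-points of $Z$ are exactly the $A'$-subalgebras $B'\subset C':=C\otimes_A A'$ with $C'/B'$ projective of rank $m-\frac{m}{q}$. By Lemma \ref{lem: intermediate subring} every $B'\in\mc{G}_{C/A}^q(A')$ lies in $Z(A')$, and a point of $Z$ lies in $\mc{G}_{C/A}^q$ precisely when, in addition, $C'$ is projective over $B'$; so $\mc{G}_{C/A}^q$ is the subfunctor of $Z$ defined by this last condition, the rank being automatically $q$ by Lemma \ref{lem: intermediate subring}.

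It remains to prove that this subfunctor is open. I would verify the standard openness criterion: given any $\Spec R\to Z$, corresponding to a subalgebra $B'\subset C'=C\otimes_A R$ with $B'$, $C'$, $C'/B'$ projective over $R$, the locus where $C'$ becomes projective over $B'$ should be an open subscheme $U\subset\Spec R$ with the correct base-change behaviour. As $R$ is Noetherian, $C'$ is finitely presented over $B'$ and both are flat over $R$, so the fibrewise criterion for flatness applies: $C'$ is flat (hence projective) over $B'$ if and only if $C'\otimes_R\kappa(\p)$ is flat over $B'\otimes_R\kappa(\p)$ for every $\p\in\Spec R$. The non-flat locus $N$ of the finite morphism $\Spec C'\to\Spec B'$ is closed, and its image under the finite (hence closed) morphism $\Spec C'\to\Spec R$ is closed; let $U$ be its complement. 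The fibrewise criterion identifies $U$ with $\{\p:\ C'\otimes_R\kappa(\p)\text{ is free over }B'\otimes_R\kappa(\p)\}$, and since flatness over a field is preserved and reflected by field extensions, for every $R$-algebra $R'$ the base change $C'\otimes_R R'$ is projective over $B'\otimes_R R'$ exactly when $\Spec R'\to\Spec R$ factors through $U$. Thus the subfunctor is represented by the open subscheme $U$, which is quasi-compact because $R$ is Noetherian. This yields a quasi-compact open immersion $\mc{G}_{C/A}^q\hookrightarrow Z$; composing with $Z\hookrightarrow\on{Grass}$ gives the desired factorization, and since $\on{Grass}$ is projective over $A$, $\mc{G}_{C/A}^q$ is quasi-projective.

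\emph{The main obstacle} I anticipate is the open part, namely converting the pointwise flatness condition into a genuine open subfunctor with the correct functorial (base-change) behaviour. This is exactly where the reduction to a Noetherian base (to guarantee finite presentation of $C'$ over $B'$ and openness of the flat locus) and the fibrewise flatness criterion do the essential work; by contrast, the closed conditions defining $Z$ are routine incidence conditions on the Grassmannian.
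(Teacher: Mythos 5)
Your reduction to a Noetherian base and your closed subscheme $Z$ of subalgebras coincide with the paper's first step (its $\mc{G}_1$), but the open part has a genuine gap: you identify $\mc{G}_{C/A}^q$ with the flat locus inside $Z$, asserting that once $C'$ is projective over $B'$ the rank is ``automatically $q$ by Lemma \ref{lem: intermediate subring}''. That lemma only goes the other way (it assumes constant rank $q$ and deduces the rank of $B'$ over $A'$), and the converse is false in the generality of this theorem, where there is no purely inseparable hypothesis and $\Spec(B')\to\Spec(A')$ need not be a bijection. Concretely, take $A=k$ a field, $C=k^4$, $m=4$, $q=2$, and $B'=k\cdot(1,1,1,1)+k\cdot(0,0,0,1)\cong k\times k$: then $B'$ is a subalgebra with $C/B'$ free of rank $2=m-\frac{m}{q}$, and $C$ is projective over $B'$, but of rank $3$ at one prime of $B'$ and rank $1$ at the other. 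This point lies in your open set $U$ yet not in $\mc{G}_{C/A}^2(k)$, so $U$ represents a strictly larger functor and the claimed open immersion $\mc{G}_{C/A}^q\hookrightarrow Z$ is not established. (In the Yuan application $\Spec(C)\to\Spec(A)$ is a homeomorphism and the rank really is forced, but the theorem is stated, and proved in the paper, for arbitrary finite projective extensions.)

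This is precisely what the paper's Fitting-ideal argument is designed to handle: by Lemma \ref{lem: projective modules via Fitting ideals}, ``projective of rank $q$'' is the conjunction of the closed condition $\on{Fitt}^{B'}_i(C\otimes_A A')=0$ for $i<q$ (which excludes the rank-jumping example above, where $\on{Fitt}_1\neq 0$) and the open condition $\on{Fitt}^{B'}_q(C\otimes_A A')=B'$. Accordingly the paper inserts a further closed subscheme $\mc{G}_2\subset\mc{G}_1$ cut out by the vanishing of the low Fitting ideals and realizes $\mc{G}_{C/A}^q$ as an open subscheme of $\mc{G}_2$, not of $\mc{G}_1$. To repair your argument you would need to impose a condition of this kind (or otherwise excise from $U$ the image of the locus where the locally constant rank function differs from $q$) and re-verify openness and the base-change property for the corrected locus; as written, the proof does not establish the theorem.
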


\begin{proof}
	By Lemma \ref{lem: reduction to a neotherian situation} and Remark \ref{rem: base change of G}, it is enough to prove the theorem under the assumption that $A$ is Noetherian.
	We will express the inclusion $\mc{G}_{C/A}^q\subset \on{Grass}_{C/A}^{m-\frac{m}{q}}$ as a composition  $$\mc{G}_{C/A}^q\to \mc{G}_2\to\mc{G}_1\to \on{Grass}_{C/A}^{m-\frac{m}{q}}\,,$$ where $\mc{G}_1\to \on{Grass}_{C/A}^{m-\frac{m}{q}}$ and $\mc{G}_2\to\mc{G}_1$ will be closed immersions and $\mc{G}_{C/A}^q\to \mc{G}_2$ will be an open immersion, which is necessarily quasi-compact since $A$ is Noetherian.

	We first consider the functor
	\begin{align*}
		\mc{G}_1\colon \on{Alg}_A&\to\on{Sets}\\
		A'&\mapsto  \left\{B'\in \on{Grass}_{C/A}^{m-\frac{m}{q}}(A'): \text{$B'$ is an $A'$-subalgebra of $C\otimes_A A'$} \right\}.
	\end{align*}
	We prove that the inclusion $\mc{G}_1\subset \on{Grass}_{C/A}^{m-\frac{m}{q}}$ is a closed immersion. Let $\pi\colon \on{Grass}_{C/A}^{m-\frac{m}{q}}\to\Spec(A)$ be the structural morphism and let $\ms{B}\subset\pi^*C$ be the universal bundle.
	So for any $A$-algebra $A'$, we have a natural bijection
	\begin{align}\label{eq: natural bijection by the universal bundle}
		\on{Hom}_A(\Spec(A'), \on{Grass}_{C/A}^{m-\frac{m}{q}})&\cong \on{Grass}_{C/A}^{m-\frac{m}{q}}(A')\\
		\nonumber h&\mapsto h^*\ms{B}\, .
	\end{align}
	We consider the following compositions of natural morphisms of sheaves on $\on{Grass}_{C/A}^{m-\frac{m}{q}}$:
	\begin{align*}
		v&\colon \ms{B}\otimes\ms{B}\to \pi^*C\otimes\pi^*C{\to} \pi^*C\to \pi^*C/\ms{B}\,, \\
		w&\colon \pi^*A\to \pi^*C\to \pi^*C/\ms{B}\,,
	\end{align*}
	where the middle morphism in the definition of $v$ is induced by the multiplication of the $A$-algebra $C$. 
	Given an $A$-algebra $A'$ and $h\in\on{Hom}_A(\Spec(A'),\on{Grass}_{C/A}^{m-\frac{m}{q}})$, the condition $h^*v=0$ asserts that the $A'$-submodule $h^*\ms{B}\subset C\otimes_A A'$ is closed under the multiplication of $C'$, and the condition $h^*w=0$ asserts that $A'\subset h^*\ms{B}$. 
	Thus, having $h^*v=0$ and $h^*w=0$ is equivalent to $h^*\ms{B}\in\mc{G}_1(A')$.
	Since $\pi^*C/\ms{B}$ is finite locally free, $\mc{G}_1$ is representable and the inclusion $\mc{G}_1\subset \on{Grass}_{C}^{m-\frac{m}{q}}$ is a closed immersion, by \cite[Proposition 8.4]{GorWed10}.

	We now consider the subfunctor
	\begin{align*}
		\mc{G}_2\colon \on{Alg}_A&\to\on{Sets}\\
		A'&\mapsto \{B'\in\mc{G}_1(A'): \text{$\on{Fitt}^{B'}_i(C\otimes_A A')=0$ for $i< q$}\}.
	\end{align*}
	We prove that the inclusion $\mc{G}_2\subset\mc{G}_1$ is a closed immersion.
	Let $\pi\colon\mc{G}_1\to\Spec(A)$ be the structural morphism and let $\ms{B}_1\subset\pi^*C$ be the universal subalgebra. So for any $A$-algebra $A'$, we have a natural bijection
	\begin{align}\label{eq: natural bijection via the universal subring}
		\on{Hom}_A(\Spec(A'), \mc{G}_1)&\cong \mc{G}_1(A')\\
		\nonumber h&\mapsto h^*\ms{B}_1\, .
	\end{align}
	For each $i=0,1,2,\dots$, we let $v_i\colon\on{Fitt}^{\ms{B}_1}_i(\pi^* C)\to \ms{B}_1$ denote the inclusion of the $i$-th Fitting ideal of the $\ms{B}_1$-module $\pi^*C$. 
	Given an $A$-algebra $A'$ and an $A$-morphism $h\colon\Spec(A')\to\mc{G}_1$, by the good behavior of the Fitting ideals under base change, the Fitting ideal $\on{Fitt}^{h^*\ms{B}_1}_i(C\otimes_A A')$ is the image of the pullback $h^*v_i$. Hence, under the bijection (\ref{eq: natural bijection via the universal subring}), 
	$\mc{G}_2(A')\subset\mc{G}_1(A')$ corresponds to those $A$-morphisms $h$ such that $h^*v_i=0$ for $i=0,1,\ldots,q-1$. 
	Since $\ms{B}_1$ is finite locally free, $\mc{G}_2$ is representable and the inclusion $\mc{G}_2\subset\mc{G}_1$ is a closed immersion, by \cite[Proposition 8.4]{GorWed10}.

	We finally study $\mc{G}_{C/A}^q\to \mc{G}_2$. Observe that if $B'\in\on{Grass}_{C/A}^{n-\frac{n}{r}}(A')$, then $B'$ is a finitely generated projective $A'$-module since both $C':=C\otimes_A A'$ and $C'/B'$ are so.
	If in addition $B'\in\mc{G}_1(A')$, then $C':=C\otimes_A A'$ is a finitely presented $B'$-module. Indeed, $C'$ is clearly finitely generated as $B'$-module, so there is an exact sequence of $B'$-modules $0\to K\to {B'}^d\to C'\to 0$ for some  positive integer  $d$. This is also an exact sequence of $A'$-modules. 
	Now $C'$ is a projective $A'$-module, so it is of finite presentation, and since $B'$ is a finitely generated projective $A'$-module, we deduce that $K$ is a finitely generated $A'$-module. In particular, $K$ is a finitely generated $B'$-module and $C'$ is a $B'$-module of finite presentation.

	It follows from the above argument and Lemma \ref{lem: projective modules via Fitting ideals} that
	 $\mc{G}_{C/A}^q$ is the subfunctor of $\mc{G}_2$ given by
	\begin{align*}
		\mc{G}_{C/A}^q\colon \on{Alg}_A&\to\on{Sets}\\
		A'&\mapsto\{B'\in\mc{G}_2(A'): \on{Fitt}^{B'}_q(C\otimes_A A')=B'\}\,.
	\end{align*}
	We prove that the inclusion $\mc{G}_{C/A}^q\subset\mc{G}_2$ is an open immersion.
	Let $\pi\colon\mc{G}_2\to\Spec(A)$ be the structural morphism and let $\ms{B}_2\subset\pi^*C$ be the universal bundle. So for any $A$-algebra $A'$, we have a natural bijection
	\begin{align}\label{eq: natural bijection via universal subring II}
		\on{Hom}_A(\Spec(A'), \mc{G}_2)&\cong \mc{G}_2(A')\\
\nonumber		h&\mapsto h^*\ms{B}_2\, .
	\end{align} 
	Consider now the map $v_q\colon \on{Fitt}_q^{\ms{B}_2}(\pi^* C)\to \ms{B}_2$. By the same token as in the proof that $\mc{G}_2\subset \mc{G}_1$ is a closed immersion, one can show that, under the natural bijection (\ref{eq: natural bijection via universal subring II}), 
	$\mc{G}_{C/A}^q(A')\subset \mc{G}_{2}(A')$ is in correspondence with those $h$ such that $h^*v_q$ is surjective.
	By \cite[Proposition 8.4]{GorWed10}, the inclusion $\mc{G}_{C/A}^q\hookrightarrow\mc{G}_2$ is an open immersion. This completes the proof of the theorem. 
\end{proof}

\begin{theorem}\label{th: existence of the Yuan scheme}
	Let $A\subset C$ be a Galois extension of differential rank $n$. Fix a non-negative integer $r\leq n$. Then the Yuan scheme $\Y_{C/A}^r$ is representable by a quasi-projective $A$-scheme.
\end{theorem}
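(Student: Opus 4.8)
The plan is to identify the Yuan functor $\Y_{C/A}^r$ with the functor $\mc{G}_{C/A}^{q}$ for a suitable divisor $q$ of the $A$-rank of $C$, and then to invoke Theorem \ref{th: representability in general}. First I would record that, since $A\subset C$ is Galois of differential rank $n$, every $p$-basis of $C_\p$ over $A_\p$ has length $n$, so a $p$-basis exhibits $C_\p$ as a free $A_\p$-module of rank $p^n$ for each $\p\in\Spec(A)$. Hence $C$ is projective of rank $m:=p^n$ as an $A$-module. Setting $q:=p^r$, the hypothesis $r\leq n$ gives $q\mid m$, so $\mc{G}_{C/A}^{q}$ is defined and, by Theorem \ref{th: representability in general}, is representable by a quasi-projective $A$-scheme.

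The heart of the argument is the claim that $\Y_{C/A}^r=\mc{G}_{C/A}^{p^r}$ as functors on $\on{Alg}_A$. Fix an $A$-algebra $A'$ and write $C':=C\otimes_A A'$; recall that $A'\subset C'$ is again Galois of differential rank $n$. For the inclusion $\Y_{C/A}^r(A')\subseteq\mc{G}_{C/A}^{p^r}(A')$: if $B'\subset C'$ is Galois of differential rank $r$, then by definition $C'$ is projective over $B'$, and since a $p$-basis of length $r$ makes $C'$ locally free of rank $p^r$ over $B'$, that rank is constantly $p^r$; thus $B'\in\mc{G}_{C/A}^{p^r}(A')$. For the reverse inclusion: if $B'$ is an $A'$-subalgebra with $C'$ projective of rank $p^r$ over $B'$, then Proposition \ref{prop: Galois intermediate extension}, applied to the Galois extension $A'\subset C'$, shows that $B'\subset C'$ is Galois; and since $C'$ has constant rank $p^r$ over $B'$, any local $p$-basis has length $r$, so the differential rank is $r$ and $B'\in\Y_{C/A}^r(A')$.

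Once this functor equality is established, the theorem follows at once: $\Y_{C/A}^r=\mc{G}_{C/A}^{p^r}$ is representable by a quasi-projective $A$-scheme by Theorem \ref{th: representability in general}, and Remark \ref{rem: base change of Yuan} and Remark \ref{rem: base change of G} ensure the identification is compatible with base change. The only point requiring care — and hence the main (mild) obstacle — is the clean matching of the two membership conditions. This rests entirely on Proposition \ref{prop: Galois intermediate extension}, which converts the purely module-theoretic condition ``$C'$ is projective over $B'$'' into the Galois property, together with the dictionary between the $B'$-rank $p^r$ of $C'$ and the differential rank $r$ of the subextension $B'\subset C'$.
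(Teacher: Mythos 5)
Your proposal is correct and follows essentially the same route as the paper: the paper's proof likewise observes that a Galois extension of differential rank $s$ has module rank $p^s$, uses Proposition \ref{prop: Galois intermediate extension} to identify $\Y_{C/A}^r$ with $\mc{G}_{C/A}^{p^r}$, and then invokes Theorem \ref{th: representability in general}. You merely spell out the two inclusions of functors in more detail than the paper does.
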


\begin{proof}
	Note that a Galois extension of exponent one $S\subset R$ has differential rank $s$ if and only if $R$ has rank $p^s$ as $S$-module. In particular, $C$ has rank $m=p^n$ as $A$-module. This observation and Proposition \ref{prop: Galois intermediate extension} show that $\Y_{C/A}^r=\mc{G}_{C/A}^{q}$ with $q=p^r$. 
	Thus, the representability of $\Y_{C/A}^r$ follows from Theorem \ref{th: representability in general}.
\end{proof}

The following example shows that the Yuan scheme $\Y_{C/A}^r$ might not be projective over $A$.
\begin{example}
	Let $k=\overline{\mb{F}_2}$ and $C=k[X,Y]/\langle X^2, Y^2\rangle=k\oplus kx\oplus ky\oplus kxy$, where $x$ and $y$ denote the class of $X$ and $Y$, respectively.
	An element of $\Y_{C/k}^1(k)$ is a subextension $k\subset B \subset C$  such that $C$ is flat over $B$.
	As $C$ is differentiably simple, $B$ is also differentiably simple (by Proposition \ref{subring of a differentiably simple ring}).
	Moreover, by Corollary \ref{cor: extending a minimal generating set of a subring}, $B$ has the form $k\oplus k(ax+by+cxy)$ for some $a,b,c\in k$ with $a\neq 0$ or $b\neq 0$. Hence  $\Y_{C/k}^1(k)$ is isomorphic as variety to $\{[a:b:c]\in\mb{P}^2(k): a\neq 0\ \mbox{or}\ b\neq 0\}\subset\mb{P}^2(k)$. This is clearly not a projective variety.
\end{example}

\subsection{Geometric properties of the Yuan scheme}

Let $A\subset C$ be a Galois extension of differential rank $n$. Fix a non-negative integer $r\leq n$. We shall analyze the geometric properties of the $A$-scheme $\Y_{C/A}^r$. 
A natural way to start is by analyzing the fibers of the structural morphism $\Y_{C/A}^r\to\Spec(A)$.
By Remark \ref{rem: base change of Yuan}, this is the same as analyzing $\Y_{C/A}^r$ when $A=k$ is a field. 
For this we consider the natural action of the algebraic group $\Aut_{C/k}$ on $\Y_{C/k}^r$ given for any $k$-algebra $R$ by
\begin{align*}
	\Aut_{C/k}(R)\times \Y_{C/k}^r(R)&\to \Y_{C/k}^r(R)\\
	(\tau,S)&\mapsto \tau(S)\,.
\end{align*}
We will see in a moment that $\Y_{C/k}^r$ is in fact a quotient space of $\Aut_{C/k}$.

Recall that if $\G$ is an affine algebraic group over a field $k$ and $\H\subset\G$ is a closed subgroup, then a quotient space of ${\G}$ by $\H$ is an algebraic scheme $\X$ over $k$ together with an action $\G\times \X\to \X$ and a rational point $o\in \X(k)$
such that for any $k$-algebra $R$:
\begin{enumerate}[QS1:]
\item $\H(R)=\{g\in \G(R): g\cdot o=o\}$, where $o$ is viewed as element of $\X(R)$, and
\item for any $x\in\X(R)$ there is a faithfully flat $R$-algebra $R'$ and $g'\in \G(R')$ such that $x=g'\cdot o$, where both $x$ and $o$ are viewed as elements of $\X(R')$.
\end{enumerate}
A quotient space always exist for affine groups $\H\subset \G$ as above, and it is unique up to $k$-isomorphism. It is denoted by ${\G}/{\H}$. It is well-known that ${\G}/{\H}$ has dimension $\dim \G-\dim \H$ and that $\G/\H$ is smooth over $k$ if $\G$ is so; see \cite[Ch. III, \S 3]{DemGab70} or \cite[Ch. 3, c.]{Milne17}. 

We return to our analysis of $\Y_{C/k}^r$.
We begin with the particular case in which $C$ is of the form
\begin{align*}
C=k[X_1,\ldots,X_n]/\langle X_1^p,\ldots,X_n^p\rangle=:k[x_1,\ldots,x_n],
\end{align*}
which occurs, for example, when $k$ is algebraically closed. Let $\m=\langle x_1,\ldots,x_n\rangle$ and $B=k[x_{r+1},\ldots,x_n]$. 
Note that $B\in \Y_{C/k}^r(k)$. 
\begin{theorem}\label{th:smoothness over a field}
The natural action of the subgroup $\Aut_{C/k,\{\m\}}\subset\Aut_{C/k}$ on $\Y_{C/k}^r$ induces a $k$-isomorphism
 $${\Aut_{C/k,\{\m\}}}/{\Aut_{C/k,\{B,\m\}}}\cong \Y_{C/k}^r.$$
 Therefore, $\Y_{C/k}^r$ is smooth over $k$ and irreducible of dimension $(p^n-p^{n-r})(n-r)$.
\end{theorem}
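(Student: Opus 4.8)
The plan is to exhibit $\Y_{C/k}^r$, equipped with the given $\Aut_{C/k,\{\m\}}$-action and base point $o=B\in\Y_{C/k}^r(k)$, as the quotient space $\Aut_{C/k,\{\m\}}/\Aut_{C/k,\{B,\m\}}$ by checking the two defining axioms QS1 and QS2; the asserted isomorphism then follows from the uniqueness of quotient spaces, together with the representability of $\Y_{C/k}^r$ from Theorem \ref{th: existence of the Yuan scheme}. Axiom QS1 is immediate from the definitions: for a $k$-algebra $R$ and $\tau\in\Aut_{C/k,\{\m\}}(R)$, the equality $\tau\cdot B=B$ in $\Y_{C/k}^r(R)$ means $\tau(B\otimes_k R)=B\otimes_k R$, which is exactly the condition defining $\Aut_{C/k,\{B,\m\}}(R)$.

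The core of the argument, and the step I expect to be the main obstacle, is the transitivity axiom QS2. Let $S\in\Y_{C/k}^r(R)$, so that $S\subset C\otimes_k R$ is Galois of differential rank $r$. Applying Proposition \ref{prop: trivilization} to the Galois extension $R\subset C\otimes_k R$ and its subalgebra $S$, I obtain a faithfully flat $R$-algebra $R'$ and a splitting $p$-basis $z_1,\ldots,z_n$ of $C\otimes_k R'$ over $R'$ with $S\otimes_R R'=R'[z_{r+1},\ldots,z_n]$. The difficulty is that the $z_i$ need not lie in $\m\otimes_k R'$, so the naive assignment $x_i\mapsto z_i$ will not in general preserve $\m$. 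To correct this, I split each $z_i=c_i+z_i^{+}$ into its constant term $c_i\in R'$ and its augmentation part $z_i^{+}\in\m\otimes_k R'$. Since every element of $\m\otimes_k R'$ has vanishing $p$-th power, $z_i^{+}$ is again a splitting $p$-basis element, Lemma \ref{lem: elementary properties of p-basis}.(i) guarantees that $\{z_1^{+},\ldots,z_n^{+}\}$ is still a splitting $p$-basis, and $R'[z_{r+1}^{+},\ldots,z_n^{+}]=R'[z_{r+1},\ldots,z_n]=S\otimes_R R'$ because the $c_i\in R'$. Now $\tau(x_i):=z_i^{+}$ defines an $R'$-algebra endomorphism of $C\otimes_k R'$ (well defined as $(z_i^{+})^p=0$), it carries the generators of $\m\otimes_k R'$ into $\m\otimes_k R'$, and, the $z_i^{+}$ forming a $p$-basis, its linear-coefficient matrix lies in $\GL_n(R')$; by the bijectivity criterion established in the computation of $\Aut_{C/A,\{B,\m\}}$, this shows $\tau\in\Aut_{C/k,\{\m\}}(R')$. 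Finally $\tau(B\otimes_k R')=R'[z_{r+1}^{+},\ldots,z_n^{+}]=S\otimes_R R'$, which is precisely QS2.

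It remains to deduce smoothness, irreducibility and the dimension. The same analysis that computed $\Aut_{C/A,\{B,\m\}}$, specialized to the stabilizer of $\m$ alone, shows that an element of $\Aut_{C/k,\{\m\}}(R)$ sends each $x_i$ to an arbitrary element $f_i\in\m\otimes_k R$ (the condition $f_i^p=0$ being automatic on $\m$), subject only to the invertibility of the $n\times n$ matrix of linear coefficients; hence
\[
\Aut_{C/k,\{\m\}}\cong\GL_n\times\mb{A}_k^{\,n(p^n-1-n)},
\]
which is smooth and geometrically irreducible of dimension $(p^n-1)n$. Since a quotient of a smooth group by a closed subgroup is smooth, $\Y_{C/k}^r$ is smooth over $k$; and since QS2 exhibits it as a single orbit up to a faithfully flat cover, it is irreducible. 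Using $\dim(\G/\H)=\dim\G-\dim\H$ together with Corollary \ref{cor: about Aut} for the dimension of $\Aut_{C/k,\{B,\m\}}$, I conclude
\[
\dim\Y_{C/k}^r=(p^n-1)n-\big[(p^n-1)r+(p^{n-r}-1)(n-r)\big]=(p^n-p^{n-r})(n-r),
\]
as asserted.
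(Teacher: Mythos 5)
Your proposal is correct and follows essentially the same route as the paper: verify QS1 directly, use Proposition \ref{prop: trivilization} plus a translation of the splitting $p$-basis into $\m\otimes_k R'$ to verify QS2, and then deduce smoothness, irreducibility and the dimension from the corresponding properties of $\Aut_{C/k,\{\m\}}$ and the formula $\dim(\G/\H)=\dim\G-\dim\H$. The only cosmetic difference is that you recompute $\Aut_{C/k,\{\m\}}\cong\GL_n\times\mb{A}_k^{n(p^n-1-n)}$ explicitly, whereas the paper obtains this as the case $r=n$ of its structure result via Corollary \ref{cor: about Aut}.
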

\begin{proof}
We will show that $\Y_{C/k}^r$ with the rational point $B\in\Y_{C/k}^r(k)$ verify QS1 and QS2 of the definition of the quotient space ${\Aut_{C/k,\{\m\}}}/{\Aut_{C/k,\{B,\m\}}}$.
Let $R$ be a $k$-algebra. 
By definition, $\Aut_{C/k,\{B,\m\}}(R)=\{\tau\in \Aut_{C/k,\{\m\}}(R): \tau(B\otimes_k R)=B\otimes_k R\}$ so QS1 is trivially satisfied.
Now let $S\in \Y_{C/k}^r(R)$ so that $R\subset S$ and $S\subset C\otimes_k R$ are Galois extensions of differential ranks $n-r$ and $r$ respectively.
We apply Proposition \ref{prop: trivilization} to these extensions, so there exists a finitely presented and faithfully flat $R$-algebra $R'$ and a splitting $p$-basis $z_1,\ldots,z_n$ for $C\otimes_kR\otimes_RR'=C\otimes_k R'$ over $R'$ such that $S\otimes_k R'=R'[z_{r+1},\ldots,z_n]$. By using the decomposition $C\otimes_k R'=R'\oplus (\m\otimes_k R')$ we can even assume that $z_1,\ldots,z_n\in \m\otimes_k R'$, so that $\langle z_1,\ldots,z_n\rangle=\m\otimes_k R'$.
Finally, if by abuse of notation we denote the image of $x_i\in C$ in $C\otimes_k R'$ again by $x_i$, then the assignation $x_i\mapsto z_i$ for each $i=1,\ldots, n$,
induces an $R'$-automorphism of $C\otimes_k R'$ which maps $\m\otimes_k R'$ onto itself and $B\otimes_k R'$ onto $S\otimes_k R'$. This completes the verification of QS2, so the isomorphism of the statement is proved.

Now $\Aut_{C/k,\{\m\}}$ is smooth over $k$ and irreducible by Corollary \ref{cor: about Aut}, so the same holds for $\Y_{C/k}^r$.
In addition, $\dim \Y_{C/k}^r=\dim\Aut_{C/k,\{\m\}}-\dim\Aut_{C/k,\{\m,B\}}=(p^n-p^{n-r})(n-r)$, where the last formula also follows from Corollary \ref{cor: about Aut}. 
\end{proof} 

\medskip

We return to the general situation.
\begin{theorem}\label{th:smoothness over a ring}
Let $A\subset C$ be a Galois extension of exponent one and differential rank $n$. Fix a positive integer $r<n$.
Then the Yuan scheme $\Y_{C/A}^r$ is smooth over $\Spec(A)$ and has smooth irreducible fibers of dimension $(p^n-p^{n-r})(n-r)$. In addition, if $\Spec(A)$ is irreducible (i.e., has a unique generic point), so is $\Y_{C/A}^r$.
\end{theorem}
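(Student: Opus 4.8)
The plan is to reduce everything to the split case treated in Theorem \ref{th:smoothness over a field}, exploiting that the Yuan functor commutes with base change (Remark \ref{rem: base change of Yuan}), and then to descend the conclusions. First I would analyze the fibers. Fix $\p\in\Spec(A)$ with residue field $\kappa(\p)$. By Remark \ref{rem: base change of Yuan}, the fiber of $\Y_{C/A}^r\to\Spec(A)$ over $\p$ is $\Y_{C\otimes_A\kappa(\p)/\kappa(\p)}^r$, and its geometric fiber is $\Y_{C\otimes_A\overline{\kappa(\p)}/\overline{\kappa(\p)}}^r$. Since $\overline{\kappa(\p)}$ is algebraically closed, the extension $\overline{\kappa(\p)}\subset C\otimes_A\overline{\kappa(\p)}$ is split (the remark following Proposition \ref{prop: extension with p-basis becomes trivial}) and has differential rank $n$ by Lemma \ref{lem: on differential rank}.(i). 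Hence Theorem \ref{th:smoothness over a field} applies and the geometric fiber is smooth and irreducible of dimension $(p^n-p^{n-r})(n-r)$. Because smoothness over a field is a geometric property, because irreducibility of the geometric fiber forces irreducibility of the fiber, and because dimension is preserved under the field extension, the fiber over $\p$ is itself smooth, irreducible, and of dimension $(p^n-p^{n-r})(n-r)$.

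Next I would establish relative smoothness by descent. By Proposition \ref{prop: trivilization} there is a faithfully flat, finitely presented $A$-algebra $A'$ such that $C':=C\otimes_A A'$ is split over $A'$, i.e. $C'\cong A'[X_1,\ldots,X_n]/\langle X_1^p,\ldots,X_n^p\rangle$. Setting $C_0:=\mb{F}_p[X_1,\ldots,X_n]/\langle X_1^p,\ldots,X_n^p\rangle$, this means $C'\cong C_0\otimes_{\mb{F}_p}A'$, so two applications of Remark \ref{rem: base change of Yuan} give
$$\Y_{C/A}^r\times_A\Spec(A')=\Y_{C'/A'}^r=\Y_{C_0/\mb{F}_p}^r\times_{\mb{F}_p}\Spec(A').$$
Since $C_0$ is split over $\mb{F}_p$, Theorem \ref{th:smoothness over a field} shows that $\Y_{C_0/\mb{F}_p}^r$ is smooth over $\mb{F}_p$; smoothness is stable under base change, so $\Y_{C/A}^r\times_A\Spec(A')$ is smooth over $A'$. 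As $\Spec(A')\to\Spec(A)$ is faithfully flat of finite presentation and smoothness is fppf-local on the base, I conclude that $\Y_{C/A}^r$ is smooth over $A$. In particular it is flat and locally of finite presentation over $A$, with fibers the smooth irreducible varieties computed in the first step.

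Finally, to prove that $\Y_{C/A}^r$ is irreducible when $\Spec(A)$ is irreducible with generic point $\eta$, I would argue on generic points. Smoothness gives flatness, hence generizations lift along $\Y_{C/A}^r\to\Spec(A)$; consequently every generic point of $\Y_{C/A}^r$ maps to a generic point of $\Spec(A)$, that is to $\eta$, and therefore lies in the generic fiber $\Y_{C\otimes_A\kappa(\eta)/\kappa(\eta)}^r$. A generic point of $\Y_{C/A}^r$ lying in this fiber is a generic point of the fiber; but the fiber is irreducible by the first step, so it has a unique generic point. Hence $\Y_{C/A}^r$ has a unique generic point and is irreducible.

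I expect the main obstacle to be the bookkeeping required to descend smoothness and irreducibility, first along the faithfully flat base change of Proposition \ref{prop: trivilization} (needed because $A\subset C$ need not be split to begin with) and then along the passage to the algebraic closure of the residue fields for the fiber analysis. Once these two descent steps are correctly set up, all the substantive geometric content is already contained in Theorem \ref{th:smoothness over a field}, so no new hard computation should be required.
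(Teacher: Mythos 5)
Your proposal is correct and follows essentially the same route as the paper: trivialize $C$ over a faithfully flat, finitely presented $A'$ via Proposition \ref{prop: trivilization}, recognize the split case as a base change of $\mb{F}_p\subset \mb{F}_p[X_1,\ldots,X_n]/\langle X_1^p,\ldots,X_n^p\rangle$ so that Theorem \ref{th:smoothness over a field} applies, descend smoothness fppf-locally, and deduce irreducibility from the irreducible generic fiber. Your treatment of the fibers (passing to $\overline{\kappa(\p)}$ and descending smoothness, irreducibility and dimension) and your going-down argument for irreducibility are just more explicit versions of steps the paper handles by citation or leaves implicit.
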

\begin{proof}
By Proposition \ref{prop: trivilization} there exists a finitely presented faithfully flat $A$-algebra $A'$ such that $C':=C\otimes_AA'\cong A'[X_1,\ldots,X_n]/\langle X_1^p,\ldots,X_n^p\rangle$. Since $\Y_{C'/A'}^r\cong \Y_{C/A}^r\times_A\Spec(A')$, in order to show that $\Y_{C/A}^r$ is smooth over $A$ it is enough to show that $\Y_{C'/A'}^r$ is smooth over $A'$. Now $A'\subset C'$ is a base change of $\mathbb{F}_p\subset \mathbb{F}_p[X_1,\ldots,X_n]/\langle X_1^p,\ldots,X_n^p\rangle$ so the smoothness of $\Y_{C'/A'}$ over $\Spec(A')$ follows from Theorem \ref{th:smoothness over a field}. 

The smoothness of the fibers and the formula for the dimension follow also from Theorem \ref{th:smoothness over a field}. 

We now assume that $\Spec(A)$ is irreducible.
 Since $\Y_{C/A}^r\to\Spec(A)$ is smooth, it is in particular open, and by Theorem \ref{th:smoothness over a field}, the fiber over the generic point of $\Spec(A)$ is irreducible. It follows from \cite[Proposition 3.24]{GorWed10} that $\Y_{C/A}^r$ is irreducible.
\end{proof}

\section*{Acknowledgements}
The first and the third authors were partially supported by PID2022-138916NB-I00, by the Spanish Ministry of Science and Innovation, through the ``Severo Ochoa'' Programme for Centres of Excellence in R\&D (CEX2019-000904-S). The second author was partially supported by CONICET and MINCYT (PICT-2018-02073). The three authors were also partially supported by the Madrid Government under the multiannual Agreement with UAM in the line for the Excellence of the University Research Staff in the context of the V PRICIT (Regional Programme of Research and Technological Innovation) 2022-2024.
The second author would like to thank the hospitality and excellent working conditions at the Department of Mathematics of the UAM, where he carried out part of this work as a Visiting Professor.

\bibliography{References}
\bibliographystyle{abbrv}
\end{document}